\newtheorem{theorem}{Theorem}[section]
\newtheorem{proposition}{Proposition}[section]
\newtheorem*{theorem*}{Theorem}
\newtheorem{corollary}{Corollary}[section]
\newtheorem{definition}{Definition}[section]
\newtheorem{remark}{Remark}[section]
\newtheorem{conjecture}{Conjecture}[section]
\newcommand{\eusc}[1]{\EuScript{#1}}
\numberwithin{equation}{section}
\renewcommand{\sf}[1]{\mathsf{#1}}
\newcommand{\cara}{Carath\'{e}odory\,\,}
\newcommand{\deee}{\hspace{2 pt} \mathrm{d}}
\newcommand{\ov}[1]{\overline{#1}}
\newcommand{\mrm}[1]{\mathrm{#1}}
\newcommand{\nml}{\left \vert \left \vert}
\newcommand{\nmr}{\right \vert \right \vert}
\renewcommand{\hat}{\widehat}
\newcommand{\fgroup}{\mathfrak{F}_{2,3}}
\newcommand{\Cstar}{C^*}
\DeclareMathOperator{\re}{\mathrm{Re}}
\DeclareMathOperator{\prob}{\mathrm{Prob}}
\DeclareMathOperator{\bbt}{\mathbb{T}}
\DeclareMathOperator{\bbz}{\mathbb{Z}}
\begin{document}

\title{Formulations of Furstenberg's $\times 2 \times 3$ conjecture in complex analysis and operator algebras}
\author{Peter Burton and Jane Panangaden}

\maketitle

\begin{abstract} Furstenberg's $\times 2 \times 3$ conjecture has remained a central open problem in ergodic theory for over $50$ years, and it serves as the basic test case for a broad class of rigidity phenomena which are believed to hold in number-theoretic dynamics. More recently, two related statements have appeared in the literature: a question about periodic approximation raised by Levit and Vigdorovich in the context of approximate group theory and a periodic equidistribution conjecture formulated by Lindenstrauss. The purpose of this article is to provide equivalent formulations for these three statements in a complex-analytic setting and an operator-algebraic setting, giving nine conjectures grouped into three triples. The complex-analytic conjectures involve so-called \cara functions on the unit disk that satisfy a certain functional identity, and we find that Furstenberg's conjecture is equivalent to the assertion that every such function is a convex combination of rational functions. The operator-algebraic conjectures involve tracial states on the full group $C^\ast$-algebra of a certain semidirect product, which is related to Baumslag-Solitar groups.

\end{abstract}

\tableofcontents

\section{Introduction}

\subsection{Nine conjectures}

\subsubsection{Ergodic theory on the circle} \label{sec.ergintro}

In Section \ref{sec.ergintro} we state the three preexisting ergodic-theoretic conjectures which motivate our work. Write $\mathbb{T}$ for the unit circle in the complex plane and write $\prob(\mathbb{T})$ for the set of all Borel probability measures on $\mathbb{T}$. We now stipulate some definitions regarding a notion of invariance for measures on $\mathbb{T}$ with respect to taking powers of the variable.
\begin{definition} \label{invdef}
For $n \in \mathbb{N}$, we define a measure $\mu \in \prob(\mathbb{T})$ to be $\times n$\emph{-invariant} if the following holds for all continuous functions $f:\mathbb{T} \to \mathbb{C}$.
\begin{equation*} \int_\mathbb{T} f(\omega^n) \deee \mu(\omega) = \int_\mathbb{T} f(\omega) \deee \mu(\omega)  \end{equation*}	
Now, let $n_1,\ldots,n_m \in \mathbb{N}$. \begin{itemize} \item  We define a measure $\mu \in \prob(\mathbb{T})$ to be $\times(n_1,\ldots,n_m)$\emph{-invariant} if $\mu$ is $\times n_k$-invariant for all $k \in \{1,\ldots,m\}$, and we write $\prob(\mathbb{T} ; n_1,\ldots,n_m)$ for the set of all $\times(n_1,\ldots,n_m)$-invariant elements of $\prob(\mathbb{T})$. 
\item A measure $\mu \in \prob(\mathbb{T} ; n_1,\ldots,n_m)$ is defined to be \emph{ergodic} if every expression $\mu = t \kappa + (1-t)\nu$ for $\kappa,\nu \in \prob(\mathbb{T} ; n_1,\ldots,n_m)$ and $t \in [0,1]$ is trivial in the sense that $t \in \{0,1\}$. \end{itemize} \end{definition}

For any element $\mu$ of $\prob(\mathbb{T})$ there exists a unique minimal closed subset $E$ of $\mathbb{T}$ such that $\mu(E)=1$. As usual, we refer to this set as the \emph{support of} $\mu$, and denote it by $\operatorname{supp}(\mu)$. We also write $\lambda$ for the Lebesgue probability measure on $\mathbb{T}$. The following conjecture appeared for the first time in \cite{MR213508}, and has remained central in ergodic theory since then. It also appears as Conjecture 3 of \cite{MR2827904}, where Lindenstrauss provides extensive context situating it in homogenous dynamics and number theory.  

\begin{conjecture}[Furstenberg's $\times (2,3)$ conjecture] \label{conj.erg-1} \label{conj.furst} Let $\mu \in \prob(\mathbb{T};2,3)$ be ergodic. Then either $\mu = \lambda$ or $\operatorname{supp}(\mu)$ is a finite set. \end{conjecture}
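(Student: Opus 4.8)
Since Conjecture \ref{conj.furst} is a long-standing open problem, what follows is a proposal for how one would attempt a proof together with an honest account of where it stalls. The plan is to run the measure-rigidity strategy of Rudolph and Johnson. Identify the $\times(2,3)$-invariance of $\mu$ with invariance under the two commuting surjective endomorphisms $T_2\colon \omega \mapsto \omega^2$ and $T_3\colon \omega \mapsto \omega^3$ of $\mathbb{T}$, so that $\mu$ is an invariant measure for an action of the semigroup $\mathbb{N}^2$; after passing to the natural extension one obtains an action containing a copy of $\mathbb{Z}^2$ on a solenoid. The decisive invariant is the entropy $h_\mu(T_2)$ of the single map $T_2$, and since $h_\mu(T_2) = (\log 2 / \log 3)\, h_\mu(T_3)$ the phrase ``positive entropy'' is unambiguous. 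The proof splits according to whether $h_\mu(T_2) > 0$ or $h_\mu(T_2) = 0$.

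\textbf{Positive-entropy case.} If $h_\mu(T_2) > 0$, I would invoke Rudolph's theorem: an ergodic $\times(2,3)$-invariant measure of positive entropy equals $\lambda$. The proof of this input builds the conditional measures of $\mu$ along the fibres of the solenoid, shows that ergodicity together with positive entropy forces these conditionals to be invariant under translation by a nontrivial closed subgroup of the fibre, and then deduces from this translation invariance combined with $\times 2$- and $\times 3$-invariance that $\mu = \lambda$. Johnson's refinement removes ergodicity for the joint action and allows $\{2,3\}$ to be replaced by any non-lacunary multiplicative semigroup; either version suffices. This settles the first alternative in the conjecture.

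\textbf{Zero-entropy case.} If $h_\mu(T_2) = 0$, the goal is to show $\operatorname{supp}(\mu)$ is finite, equivalently that $\mu$ is the uniform measure on a single periodic orbit consisting of roots of unity of order coprime to $6$. One would hope to exploit the rigidity of deterministic systems: an ergodic, zero-entropy, $T_2$-invariant measure has very constrained spectral and combinatorial structure, and one would like to argue that simultaneous $T_3$-invariance, together with the fact that $\{2^a 3^b : a,b \in \mathbb{Z}\}$ is dense in $(0,\infty)$ multiplicatively, forces the orbit closure to collapse to a finite set.

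\textbf{The obstruction.} This last step is precisely where the argument fails, and is the reason the statement remains a conjecture: there is at present no known way to rule out an exotic ergodic $\times(2,3)$-invariant measure with infinite support and zero entropy. Entropy methods are vacuous in this regime, and the strongest available tools --- Host's theorem that $\times q$-invariant measures are $\times p$-normal, Fourier-decay estimates of Bourgain--Lindenstrauss--Michel--Venkatesh type, and Lindenstrauss's measure rigidity for higher-rank diagonalizable actions --- give strong structural constraints but fall short of a contradiction. Accordingly the honest outcome of this approach is a reduction of Conjecture \ref{conj.furst} to its zero-entropy case, and it is exactly this case that the complex-analytic and operator-algebraic reformulations developed in the rest of this paper are designed to expose from new angles.
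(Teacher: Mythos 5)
You have correctly recognized that this is an open conjecture and not a theorem: the paper does not supply a proof of it, and indeed the entire point of the paper is to give equivalent reformulations (Theorems \ref{thm.1}--\ref{thm.3}) rather than to resolve it. Your honest account is therefore the right kind of answer, and the content is accurate. The split into positive and zero entropy, the equality $h_\mu(T_2) = (\log 2/\log 3)\,h_\mu(T_3)$, the appeal to Rudolph's theorem (with Johnson's extension to non-lacunary semigroups), and the identification of the zero-entropy case as the genuine obstruction are all correct as a summary of the state of the art. Your closing remark that the zero-entropy case is exactly what the complex-analytic and operator-algebraic reformulations are meant to illuminate aligns with the paper's stated motivation. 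The only caution I would add is one of framing: since no proof exists, this text should be presented explicitly as a discussion of known partial results and obstructions rather than as a ``proof proposal,'' so that a reader does not mistake the positive-entropy argument for a resolution of Conjecture \ref{conj.furst}.
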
 

\begin{remark} \label{rem.six} Conjecture \ref{conj.furst} has a version for $\times (a,b)$-invariant measures whenever $a,b \in \mathbb{N}$ are multiplicatively independent. For clarity of exposition, in this article we focus on the $\times (2,3)$ case. However, all our results can be extended to the general case by directly substituting $a$ and $b$ for $2$ and $3$ respectively. \end{remark}

 We recall that the \emph{vague topology} on $\prob(\mathbb{T})$ is defined by the stipulation that a sequence $(\mu_n)_{n \in \mathbb{N}}$ of elements of $\prob(\mathbb{T})$ converges vaguely to $\mu \in \prob(\mathbb{T})$ if and only if the following holds for all continuous functions $f:\mathbb{T} \to \mathbb{C}$.
\begin{equation*} \lim_{n \to \infty} \int_\mathbb{T} f(\omega) \deee \mu_n(\omega) = \int_\mathbb{T} f(\omega) \deee \mu(\omega) \end{equation*}

The following is posed as Question 4 in \cite{MR4753084}, where it is connected to the notion of Hilbert-Schmidt stability for sequence of approximate representations. We formulate it as a conjecture.

\begin{conjecture}[Periodic approximation conjecture, after Levit and Vigdorovich] \label{conj.erg-2} Every $\times(2,3)$-invariant measure on $\mathbb{T}$ is a vague limit of finitely supported $\times(2,3)$-invariant measures. \end{conjecture}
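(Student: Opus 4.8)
The plan is to prove Conjecture~\ref{conj.erg-2} \emph{conditionally on Furstenberg's conjecture} (Conjecture~\ref{conj.furst}); I expect an unconditional argument to be out of reach, and I will indicate exactly where the obstruction sits. The first step is to set up the target set. Let $F \subseteq \prob(\mathbb{T})$ be the collection of finitely supported $\times(2,3)$-invariant measures and let $C \coloneqq \overline{F}$ be its closure in the vague topology. Since $\prob(\mathbb{T})$ is vaguely compact and metrizable and the two invariance conditions of Definition~\ref{invdef} are vaguely closed, $\prob(\mathbb{T};2,3)$ is a compact convex metrizable subset of $\prob(\mathbb{T})$, and $C \subseteq \prob(\mathbb{T};2,3)$. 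A convex combination of two finitely supported $\times(2,3)$-invariant measures is again finitely supported and $\times(2,3)$-invariant, so $F$ is convex; hence $C$ is a closed convex subset of $\prob(\mathbb{T};2,3)$. The conjecture is precisely the assertion $C = \prob(\mathbb{T};2,3)$.

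Next I would locate Lebesgue measure inside $C$. For $N \in \mathbb{N}$ with $\gcd(N,6) = 1$, multiplication by $2$ and by $3$ are bijections of $\mathbb{Z}/N\mathbb{Z}$, so the uniform probability measure $\mu_N$ on the set of $N$-th roots of unity is finitely supported and $\times(2,3)$-invariant, i.e.\ $\mu_N \in F$. A one-line Fourier computation gives $\int_\mathbb{T} \omega^k \deee\mu_N(\omega) = 1$ when $N \mid k$ and $0$ otherwise, so $\mu_N \to \lambda$ vaguely as $N \to \infty$ through integers coprime to $6$; thus $\lambda \in C$. Of course every finite $\times(2,3)$-invariant orbit measure already lies in $F \subseteq C$ by definition, so at this point $\{\lambda\} \cup F \subseteq C$.

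The third step is the reduction via ergodic decomposition. By the very formulation of ergodicity in Definition~\ref{invdef}, the extreme points of the compact convex set $\prob(\mathbb{T};2,3)$ are exactly its ergodic elements. Assuming Conjecture~\ref{conj.furst}, every ergodic $\mu \in \prob(\mathbb{T};2,3)$ is either $\lambda$ or has finite support, and in the latter case it lies in $F$; hence the extreme point set is contained in $\{\lambda\} \cup F \subseteq C$. Since $\prob(\mathbb{T};2,3)$ is compact, convex and metrizable, the Krein--Milman theorem gives $\prob(\mathbb{T};2,3) = \overline{\operatorname{conv}}\!\big(\operatorname{ext}(\prob(\mathbb{T};2,3))\big)$, and as $C$ is closed and convex and contains those extreme points we conclude $\prob(\mathbb{T};2,3) \subseteq C$, which together with $C \subseteq \prob(\mathbb{T};2,3)$ finishes the conditional proof.

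The main obstacle is the appeal to Furstenberg's conjecture in the third step: without it one cannot exclude an ergodic $\times(2,3)$-invariant measure $\mu$ with infinite support and $\mu \neq \lambda$, and for such a hypothetical ``exotic'' measure there is no known construction producing finitely supported invariant measures that converge to it vaguely. In other words, the vague density of $F$ is itself a rigidity statement of essentially the same flavour as the thing we are assuming, so an unconditional proof would require either a genuinely new source of periodic approximants or an argument that the failure of vague density of $F$ would already contradict known partial results such as the Rudolph--Johnson entropy theorem. I would therefore present the above as a clean equivalence-style reduction rather than as a resolution of the conjecture.
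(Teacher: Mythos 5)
The statement you were given is a \emph{conjecture}, so the paper contains no proof of it; what the paper proves about Conjecture~\ref{conj.erg-2} is Theorem~\ref{thm.2}, namely that it is equivalent to the complex-analytic Conjecture~\ref{conj.comp-2} and the operator-algebraic Conjecture~\ref{conj.see-2}, via the Herglotz correspondence of Section~\ref{sec.meas-fun} (in particular Propositions~\ref{prop.homeo} and~\ref{prop.wellroot}) and the tracial-state correspondence of Section~\ref{sec.tracial} (in particular Propositions~\ref{prop.contstate} and~\ref{prop.conddim}). You correctly recognize that an unconditional proof is out of reach and instead argue the implication Conjecture~\ref{conj.furst} $\Rightarrow$ Conjecture~\ref{conj.erg-2}, which is a different deliverable from anything the paper establishes: the paper never compares the three members of the first triple of conjectures to the three members of the second.

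Your conditional argument is correct as written. The set $\prob(\mathbb{T};2,3)$ is compact, convex and metrizable in the vague (weak-$*$) topology, since $C(\mathbb{T})$ is separable and the two invariance conditions in Definition~\ref{invdef} are vaguely closed; the ergodic measures are exactly the extreme points by the paper's own Definition~\ref{invdef}; the uniform measures on the $N$-th roots of unity with $\gcd(N,6)=1$ lie in $F$ because $\times 2$ and $\times 3$ permute $\mathbb{Z}/N\mathbb{Z}$, and the Fourier computation $\hat{\mu_N}(k)=\mathbf{1}_{N\mid k}\to\hat{\lambda}(k)$ gives $\lambda\in C$; convexity of $F$ (union of two finite supports is finite) makes $C$ closed convex; and Krein--Milman then forces $\prob(\mathbb{T};2,3)=\overline{\operatorname{conv}}(\operatorname{ext})\subseteq C$. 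This is a clean and standard way to see that Furstenberg implies periodic approximation.

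What your argument buys is a placement of Conjecture~\ref{conj.erg-2} \emph{below} Conjecture~\ref{conj.furst} in the logical ordering; what the paper's Theorem~\ref{thm.2} buys is a lateral translation of Conjecture~\ref{conj.erg-2} into a statement about compact-uniform approximation of $\times(2,3)$-circular Carath\'eodory functions by rational ones, and into a statement about pointwise approximation of tracial states on $C^\ast(\mathfrak{F}_{2,3})$ by conditionally finite-dimensional ones. The two contributions are complementary and do not overlap: your Krein--Milman reduction appears nowhere in the paper, and the paper's Herglotz and GNS machinery appears nowhere in your argument. If the goal was to reconstruct the paper's handling of this conjecture, the expected content is Theorem~\ref{thm.2} and its two correspondences, not a conditional proof; but as a self-contained observation about Conjecture~\ref{conj.erg-2} your reduction is sound.
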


The following statement was formulated as a question by Bourgain in \cite{MR2534239} and appears as Conjecture 6 in \cite{MR2827904}.

\begin{conjecture}[Bourgain-Lindenstrauss periodic equidistribution conjecture] \label{conj.erg-3} Let $(\mu_n)_{n \in \mathbb{N}}$ be a sequence of ergodic finitely supported elements of $\prob(\mathbb{T};2,3)$ and assume that $\lim_{n \to \infty} \lvert \operatorname{supp}(\mu_n)\rvert = \infty$. Then $(\mu_n)_{n \in \mathbb{N}}$ converges in the vague topology  to $\lambda$ as $n \to \infty$. \end{conjecture}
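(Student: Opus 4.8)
The plan is to pass to vague limit points by a soft compactness argument, to use the measure-classification content of Conjecture \ref{conj.furst} to describe those limits, and thereby to reduce the statement to a quantitative non-concentration estimate for the orbits involved. Since $\mathbb{T}$ is compact, $\prob(\mathbb{T})$ is vaguely compact, so it suffices to show that every vague limit point of $(\mu_n)_{n\in\mathbb{N}}$ is $\lambda$; fix such a limit point $\mu$, attained along a subsequence $(\mu_{n_k})_{k\in\mathbb{N}}$, and observe that passing to the limit in the identities of Definition \ref{invdef} gives $\mu\in\prob(\mathbb{T};2,3)$. I would also record the elementary structural fact that an ergodic, finitely supported $\times(2,3)$-invariant measure is precisely the normalized counting measure on a single finite orbit of the maps $\omega\mapsto\omega^2$ and $\omega\mapsto\omega^3$: invariance forces these maps to permute the finite support and to keep the weights constant along the orbits of the semigroup they generate, and ergodicity forces the support to be a single such orbit. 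Every finite orbit of these two maps consists of roots of unity, so each $\mu_n$ is the uniform measure on a finite orbit $O_n$ of roots of unity with $|O_n|\to\infty$.

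Next I would apply the ergodic decomposition to $\mu$ and invoke Conjecture \ref{conj.furst}: writing $\mu$ as an integral of its ergodic $\times(2,3)$-invariant components, each component is either $\lambda$ or is supported on a finite orbit, so collecting terms gives $\mu = c\lambda + (1-c)\nu$ for some $c\in[0,1]$, where $\nu\in\prob(\mathbb{T})$ is purely atomic and supported on the countable set of roots of unity. Since $\lambda$ is non-atomic, every atom of $\mu$ is a root of unity, and $\mu=\lambda$ is equivalent to $\mu$ having no atoms. Because $\mu_{n_k}\to\mu$ vaguely, for every root of unity $\zeta$ and every $\varepsilon>0$ the open arc $A_\varepsilon(\zeta)$ of radius $\varepsilon$ about $\zeta$ satisfies $\mu(\{\zeta\})\le\mu(A_\varepsilon(\zeta))\le\liminf_{k\to\infty}\mu_{n_k}(A_\varepsilon(\zeta))$, the last step being the portmanteau inequality for open sets. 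Hence, assuming Conjecture \ref{conj.furst}, Conjecture \ref{conj.erg-3} is equivalent to the following non-concentration estimate: for every sequence $(\mu_n)_{n\in\mathbb{N}}$ of ergodic, finitely supported elements of $\prob(\mathbb{T};2,3)$ with $|\operatorname{supp}(\mu_n)|\to\infty$, and every root of unity $\zeta$,
\begin{equation*} \lim_{\varepsilon\to 0}\ \limsup_{n\to\infty}\ \mu_n\big(A_\varepsilon(\zeta)\big) \;=\; 0 ; \end{equation*}
in words, a growing finite $\times(2,3)$-orbit of roots of unity carries asymptotically negligible mass near any fixed root of unity (equivalently, near any fixed finite set).

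This non-concentration estimate is the main obstacle, and it is where all of the genuine difficulty lies; it is, in essence, the heart of Bourgain's original question in \cite{MR2534239}. Parametrizing $O_n$ as $\{2^a 3^b (p_n/q_n) \bmod 1 : a,b\ge 0\}$ with $q_n\to\infty$, the required bound is a statement about the equidistribution of the multiplicative orbit of $\langle 2,3\rangle$ inside $\mathbb{Z}/q_n\mathbb{Z}$, which after a Fourier expansion amounts to showing that the Weyl sums $\frac{1}{|O_n|}\big|\sum_{x\in O_n} e^{2\pi i\, r x}\big|$ are small, uniformly over the nonzero frequencies $r$ in a range depending on $1/\varepsilon$. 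Bounds of this strength are not known in general; the available partial results---under arithmetic hypotheses on $q_n$, or hypotheses preventing the orbit from clustering in a short arc---proceed through sum-product phenomena and additive-combinatorial input in the spirit of work of Bourgain, Lindenstrauss, Michel and Venkatesh. My proposed route would be to approach this estimate through the equivalent complex-analytic and operator-algebraic formulations given elsewhere in this paper, in the hope that the functional identity satisfied by the associated \cara functions, or the structure of the relevant tracial states, exposes rigidity that is invisible at the level of exponential sums; absent such a new input I would not expect to settle the estimate unconditionally.
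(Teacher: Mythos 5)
The statement in question is labeled a conjecture and is not proved anywhere in the paper; the paper's contribution regarding it (Theorem \ref{thm.3}) is to show that Conjecture \ref{conj.erg-3} is \emph{equivalent} to its complex-analytic formulation, Conjecture \ref{conj.comp-3}, and its operator-algebraic formulation, Conjecture \ref{conj.see-3} --- not to prove any of the three. There is therefore no proof of this statement in the paper against which your proposal could be measured, and no proof attempt could succeed here since the statement is genuinely open.

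Within that caveat, your reduction is sound and correctly assesses its own limits. The vague compactness of $\prob(\mathbb{T})$ and the invariance of vague limit points are right; the structure you assert for finitely supported ergodic $\times(2,3)$-invariant measures (uniform measure on a single orbit of roots of unity) coincides with Propositions \ref{prop.roots} and \ref{prop.uniformorbit}; the ergodic decomposition combined with Conjecture \ref{conj.furst} gives the asserted decomposition $\mu = c\lambda + (1-c)\nu$ with $\nu$ purely atomic and carried by roots of unity; and the portmanteau inequality correctly converts atom-freeness of the limit into a non-concentration statement along the sequence. You have thus shown, correctly, that modulo Furstenberg's conjecture, Conjecture \ref{conj.erg-3} is equivalent to the non-concentration estimate you display, and you are right that this estimate is the genuinely hard, open content of Bourgain's question; your closing acknowledgment that you cannot settle it unconditionally is the honest and correct conclusion. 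What the paper does that your proposal only gestures at is to carry out the precise translations into \cara functions (via the Herglotz correspondence of Section \ref{sec.meas-fun}) and into tracial states on $C^\ast(\mathfrak{F}_{2,3})$ (via Section \ref{sec.tracial}); you mention these as a possible future route but do not develop them, and that development is essentially all of the technical content the paper offers concerning this statement.
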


The above statement of Conjecture \ref{conj.erg-3} has a superficial difference from the statement in \cite{MR2827904}. It is straightforward to see that this difference is irrelevant, and we prove details in Section \ref{sec.ooska} below. 

\subsubsection{Carath\'{e}odory functions} \label{sec.carinto}

In Section \ref{sec.carinto} we formulate three complex-analytic conjectures which will be shown to be equivalent to the ergodic-theoretic conjectures from Section \ref{sec.ergintro}. Write $\mathbb{D}$ for the unit disk in the complex plane. A \emph{Carath\'{e}odory function} is defined to be a holomorphic function $\psi:\mathbb{D} \to \mathbb{C}$ such that $\re(\psi(z)) >0$ for all $z \in \mathbb{D}$ and such that $\psi(0) = 1$. We write $\operatorname{Cara}(\mathbb{D})$ for the set of all \cara functions. We now stipulate some definitions regarding a condition on Carath\'{e}odory functions that relates taking powers of the variable to averages over rotates.
\begin{definition}
For $n \in \mathbb{N}_+$, we define a function $\psi:\mathbb{D} \to \mathbb{C}$ to be $\times n$-\emph{circular} if the following holds for all $z \in \mathbb{D}$. \[ \psi(z^n) = \frac{1}{n} \sum_{k=0}^{n-1} \psi(e^{2 \pi i k/n} z) \]
Now, let $n_1,\ldots,n_m \in \mathbb{N}$. \begin{itemize} \item  We define a function $\psi:\mathbb{D} \to \mathbb{C}$ to be $\times(n_1,\ldots,n_m)$\emph{-circular} if $\psi$ is $\times n_k$-circular for all $k \in \{1,\ldots,m\}$. We also write $\operatorname{Cara}(\mathbb{D} ; n_1,\ldots,n_m)$ for the set of all $\times(n_1,\ldots,n_m)$-circular elements of $\operatorname{Cara}(\mathbb{D})$. 
\item A function $\psi \in \operatorname{Cara}(\mathbb{D} ; n_1,\ldots,n_m)$ is defined to be \emph{extreme} if every expression $\psi = t \alpha + (1-t)\beta$ for $\alpha,\beta \in \operatorname{Cara}(\mathbb{D} ; n_1,\ldots,n_m)$ and $t \in [0,1]$ is trivial in the sense that $t \in \{0,1\}$. \end{itemize} \end{definition}

We now formulate our three complex-analytic conjectures.

\begin{conjecture}\label{conj.comp-1} Every extreme element of $\operatorname{Cara}(\mathbb{D} ; 2,3)$ is rational. \end{conjecture}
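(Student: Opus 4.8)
The plan is to show that Conjecture~\ref{conj.comp-1} is equivalent to Conjecture~\ref{conj.furst} --- and in particular follows from it --- by transporting everything through the Herglotz--Riesz representation of Carathéodory functions. Recall that a holomorphic $\psi\colon\mathbb{D}\to\mathbb{C}$ lies in $\operatorname{Cara}(\mathbb{D})$ precisely when there is a \emph{unique} $\mu\in\prob(\mathbb{T})$ with $\psi(z)=\int_{\mathbb{T}}\frac{\omega+z}{\omega-z}\,\mathrm{d}\mu(\omega)$; here $\re\frac{\omega+z}{\omega-z}$ is the Poisson kernel, which accounts for the condition $\re\psi>0$, and $\psi(0)=\mu(\mathbb{T})=1$ accounts for the normalization. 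Writing $\psi_\mu$ for this function, $\mu\mapsto\psi_\mu$ is an affine bijection $\prob(\mathbb{T})\to\operatorname{Cara}(\mathbb{D})$, so it carries convex combinations to convex combinations in both directions, and in particular takes ergodic measures to extreme functions and vice versa.

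The crux is the following identification: for each $n\in\mathbb{N}_+$, a measure $\mu\in\prob(\mathbb{T})$ is $\times n$-invariant if and only if $\psi_\mu$ is $\times n$-circular. To prove it, expand $\frac{\omega+z}{\omega-z}=1+2\sum_{j\ge 1}(z/\omega)^j$, average over the $n$-th roots of unity, and use $\frac{1}{n}\sum_{k=0}^{n-1}e^{2\pi i k j/n}=\mathbf{1}[\,n\mid j\,]$ to get
\[
\frac{1}{n}\sum_{k=0}^{n-1}\psi_\mu\!\left(e^{2\pi i k/n}z\right)=\int_{\mathbb{T}}\frac{\omega^{n}+z^{n}}{\omega^{n}-z^{n}}\,\mathrm{d}\mu(\omega)=\psi_{\mu_n}(z^{n}),
\]
where $\mu_n$ denotes the pushforward of $\mu$ under $\omega\mapsto\omega^{n}$. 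The left-hand side of the $\times n$-circular identity is $\psi_\mu(z^{n})$, and $z\mapsto z^{n}$ maps $\mathbb{D}$ onto $\mathbb{D}$, so $\times n$-circularity of $\psi_\mu$ is equivalent to $\psi_{\mu_n}=\psi_\mu$ on $\mathbb{D}$, hence by uniqueness to $\mu_n=\mu$, which by Definition~\ref{invdef} is exactly $\times n$-invariance of $\mu$. Therefore the Herglotz bijection restricts to an affine bijection $\prob(\mathbb{T};2,3)\to\operatorname{Cara}(\mathbb{D};2,3)$ matching ergodic measures with extreme elements.

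Finally one must recognize the rational functions on the $\operatorname{Cara}$ side. One direction is immediate: $\psi_\lambda\equiv 1$ and $\psi_{\sum_i c_i\delta_{\omega_i}}=\sum_i c_i\frac{\omega_i+z}{\omega_i-z}$ are both rational, so granting Conjecture~\ref{conj.furst} every extreme element of $\operatorname{Cara}(\mathbb{D};2,3)$ comes from an ergodic measure that is $\lambda$ or finitely supported and is therefore rational, which is Conjecture~\ref{conj.comp-1}. For the converse, and hence the full equivalence, one uses that a rational element of $\operatorname{Cara}(\mathbb{D})$ has Herglotz measure equal to finitely many point masses (at its poles on $\mathbb{T}$) plus an absolutely continuous measure, with no singular-continuous part; since $\omega\mapsto\omega^{n}$ is a finite-to-one local homeomorphism of $\mathbb{T}$, the Lebesgue decomposition of a $\times(2,3)$-invariant measure is componentwise $\times(2,3)$-invariant, so an extreme such measure is purely atomic --- hence finitely supported --- or purely absolutely continuous, and in the latter case $\times 2$-invariance plus exactness of the doubling map forces it to equal $\lambda$. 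The only real work is the kernel identity of the second paragraph; once the $\times n$-circular condition is seen to be the exact analytic counterpart of $\times n$-invariance, deriving Conjecture~\ref{conj.comp-1} from Conjecture~\ref{conj.furst} is purely formal, and the genuinely extra inputs --- needed only for the reverse implication --- are the boundary structure of rational Carathéodory functions and the uniqueness of the absolutely continuous invariant probability measure for $\times 2$.
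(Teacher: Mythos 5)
Your approach matches the paper's at the structural level: you pass through the Herglotz--Riesz bijection $\mu \leftrightarrow \psi_\mu$ between $\prob(\mathbb{T})$ and $\operatorname{Cara}(\mathbb{D})$, show it restricts to an affine bijection between $\prob(\mathbb{T};2,3)$ and $\operatorname{Cara}(\mathbb{D};2,3)$ carrying ergodic measures to extreme functions, and transport Conjecture~\ref{conj.furst} across --- which is exactly how the paper reduces Conjecture~\ref{conj.comp-1} to Conjecture~\ref{conj.erg-1}. The differences are in the two technical lemmas. For the equivalence of $\times n$-circularity of $\psi_\mu$ with $\times n$-invariance of $\mu$, the paper argues coefficient by coefficient (Propositions~\ref{prop.fou-tay} and~\ref{coeffs}): $\times n$-invariance is $\hat\mu(\ell)=\hat\mu(n\ell)$ and $\times n$-circularity is $a_\psi(\ell)=a_\psi(n\ell)$, and the two match under the Fourier--Taylor dictionary. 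You instead average the Herglotz kernel over $n$-th roots of unity directly and identify the result with the Herglotz transform of the pushforward of $\mu$ under $\omega\mapsto\omega^n$ evaluated at $z^n$; this is a cleaner, coordinate-free rendering of the same computation. For the converse implication --- that a rational extreme element of $\operatorname{Cara}(\mathbb{D};2,3)$ corresponds to $\lambda$ or a finitely supported measure --- the paper uses the pointwise ergodic theorem to prove $\mu_\psi\perp\lambda$ when $\psi$ is nonconstant and extreme (Proposition~\ref{prop.singular}) and then invokes a radial boundary-limit theorem from Simon to conclude $\mu_\psi$ is finitely supported (Proposition~\ref{prop.wellroot}); you instead Lebesgue-decompose the Herglotz measure of a rational Carath\'eodory function into atoms at boundary poles plus an absolutely continuous part with no singular-continuous piece, note that the decomposition is componentwise $\times(2,3)$-invariant, and appeal to the uniqueness of the absolutely continuous $\times 2$-invariant probability measure. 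Both routes are sound; yours trades the ergodic theorem plus a black-boxed Fatou-type theorem for two classical facts (the boundary structure of rational Carath\'eodory functions and uniqueness of the a.c.\ invariant probability for the doubling map), which you state correctly but leave as assertions.
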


\begin{conjecture} \label{conj.comp-2}  Every element of $\operatorname{Cara}(\mathbb{D} ; 2,3)$ is a compact-uniform limit of rational elements of \linebreak $\operatorname{Cara}(\mathbb{D} ; 2,3)$. \end{conjecture}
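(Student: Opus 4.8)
The plan is to prove that Conjecture~\ref{conj.comp-2} is equivalent to Conjecture~\ref{conj.erg-2}; since the latter is among the statements already on the table, everything reduces to building a dictionary between $\prob(\mathbb{T})$ and $\operatorname{Cara}(\mathbb{D})$ and transporting the approximation statement across it. The dictionary is the Herglotz--Riesz transform $\mu \mapsto \psi_\mu$, where $\psi_\mu(z) = \int_{\mathbb{T}} \tfrac{\omega+z}{\omega-z}\,\dee\mu(\omega)$. The routine facts to establish are: $\psi_\mu \in \operatorname{Cara}(\mathbb{D})$, because $\re\bigl(\tfrac{\omega+z}{\omega-z}\bigr) = \tfrac{1-|z|^2}{|\omega-z|^2} > 0$ on $\mathbb{D}$ and $\psi_\mu(0)=1$; the map $\mu \mapsto \psi_\mu$ is affine; it is a bijection onto $\operatorname{Cara}(\mathbb{D})$, surjectivity being the Herglotz representation theorem (here $\psi(0)=1$ forces the representing measure to be a probability measure) and injectivity because the Taylor coefficients of $\psi_\mu$ recover the Fourier coefficients $\hat\mu(j) := \int_{\mathbb{T}} \omega^{-j}\,\dee\mu(\omega)$; and it is a homeomorphism from the vague topology on $\prob(\mathbb{T})$ to the topology of uniform convergence on compact subsets of $\mathbb{D}$ (vague convergence gives pointwise convergence of the $\psi_{\mu_n}$, which upgrades to compact-uniform convergence by local boundedness and Vitali's theorem; conversely compact-uniform convergence forces convergence of all Taylor, hence all Fourier, coefficients, hence vague convergence since $\prob(\mathbb{T})$ is vaguely compact and metrizable).

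Next I would match the invariance conditions. Expanding $\tfrac{\omega+z}{\omega-z} = 1 + 2\sum_{j\ge 1}\omega^{-j}z^j$ yields $\psi_\mu(z) = 1 + 2\sum_{j\ge 1}\hat\mu(j)z^j$, and a direct computation gives $\tfrac1n\sum_{k=0}^{n-1}\psi_\mu(e^{2\pi i k/n}z) = 1 + 2\sum_{\ell\ge 1}\hat\mu(n\ell)z^{n\ell}$ while $\psi_\mu(z^n) = 1 + 2\sum_{\ell\ge 1}\hat\mu(\ell)z^{n\ell}$. Hence $\psi_\mu$ is $\times n$-circular if and only if $\hat\mu(\ell)=\hat\mu(n\ell)$ for all $\ell\ge 1$, and this last condition, by density of trigonometric polynomials, is exactly $\times n$-invariance of $\mu$. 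So the Herglotz transform restricts to an affine homeomorphism $\prob(\mathbb{T};2,3)\to\operatorname{Cara}(\mathbb{D};2,3)$.

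The substantive step, which I expect to be the main obstacle, is to identify the closure of the set of rational elements of $\operatorname{Cara}(\mathbb{D};2,3)$ with the closure of the image of the finitely supported $\times(2,3)$-invariant measures. One inclusion is immediate: if $\mu = \sum_i p_i\delta_{\omega_i}$ then $\psi_\mu = \sum_i p_i\tfrac{\omega_i+z}{\omega_i-z}$ is rational. For the reverse I would prove the structural lemma that if $\psi\in\operatorname{Cara}(\mathbb{D};2,3)$ is rational, then its Herglotz measure has the form $\nu + c\lambda$ with $\nu$ finitely supported and $c\ge 0$. The argument: rationality makes $(\hat\mu(j))_j$ eventually an exponential polynomial $\sum_k P_k(j)\beta_k^{-j}$, the $\beta_k$ being the poles of $\psi$, all of modulus $\ge 1$; boundedness of $\hat\mu(j)$ forces the poles on the unit circle to be simple, and using $\hat\mu(j)=\hat\mu(2j)$ together with the fact that an exponential polynomial in $j$ that tends to $0$ must vanish identically eliminates the poles of modulus $>1$ and shows the remaining, unimodular, part is itself invariant under $j\mapsto 2j$. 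Thus $\hat\mu(j) = \hat\tau(j)$ for all large $j$ for a finitely supported complex measure $\tau$, and bootstrapping through $\hat\mu(j) = \hat\mu(2^k j)$ (and the corresponding invariance of $\tau$) promotes this to all $j\ge 1$. The finite complex measure $\mu-\tau$ then has vanishing Fourier coefficients in every positive degree, so by the F.\ and M.\ Riesz theorem it is absolutely continuous; in particular it has no atoms, so $\tau$ is the atomic part of $\mu$, hence a positive and therefore Hermitian measure, whence $\mu-\tau$ is Hermitian with all nonzero Fourier coefficients vanishing, i.e.\ equal to $c\lambda$ for some $c\ge 0$. Finally, measures of the form $\nu + c\lambda$ with $\nu$ finitely supported $\times(2,3)$-invariant are vague limits of finitely supported $\times(2,3)$-invariant measures: replace $c\lambda$ by $c$ times the uniform probability measure on the $p$-th roots of unity with $p$ a large prime coprime to $6$, which is $\times(2,3)$-invariant and converges vaguely to $c\lambda$.

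Assembling the three steps: under the homeomorphism of the second step, Conjecture~\ref{conj.comp-2} asserts that $\operatorname{Cara}(\mathbb{D};2,3)$ equals the closure of its rational elements, and by the third step this closure coincides with the closure of the image of the finitely supported $\times(2,3)$-invariant measures; transported back, this is precisely the assertion that $\prob(\mathbb{T};2,3)$ equals the vague closure of its finitely supported members, which is Conjecture~\ref{conj.erg-2}. Granting Conjecture~\ref{conj.erg-2}, the implication to Conjecture~\ref{conj.comp-2} is then immediate, and the reverse implication is equally so; the only place real work is hidden is the structural lemma, and within it the elimination of off-circle poles via boundedness-plus-invariance and the appeal to the F.\ and M.\ Riesz theorem.
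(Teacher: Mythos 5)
Your outline is correct and reaches the equivalence with Conjecture~\ref{conj.erg-2} by the same Herglotz dictionary the paper uses; the first two blocks (the affine homeomorphism between $\prob(\mathbb{T})$ and $\operatorname{Cara}(\mathbb{D})$, and the matching of $\times n$-invariance with $\times n$-circularity through Fourier and Taylor coefficients) are in substance what the paper develops in Sections~2.1--2.5. Where you diverge is the structural lemma on rational circular Carath\'eodory functions, and your version is both different in method and more careful. The paper's Proposition~\ref{prop.wellroot} only treats \emph{extreme} rational nonconstant $\psi$, showing $\mu_\psi$ is uniform on a finite orbit via the $\mathbb{N}^2$ pointwise ergodic theorem (to get mutual singularity with $\lambda$) plus radial boundary behavior of Carath\'eodory functions, and then reduces general rational $\psi_n$ to this by decomposing into extreme rational pieces, asserting that $\mu_{\psi_n}$ is finitely supported. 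That assertion does not hold as stated: $\psi(z)=(1-z)^{-1}$ is a rational element of $\operatorname{Cara}(\mathbb{D};2,3)$ with $\mu_\psi=\tfrac12\lambda+\tfrac12\boldsymbol{\delta}_1$, which is not finitely supported, since the constant function $1$ is itself an extreme rational element whose Herglotz measure is $\lambda$. Your lemma --- proved for \emph{all} rational $\psi\in\operatorname{Cara}(\mathbb{D};2,3)$ via eventual exponential polynomials of the Taylor coefficients, boundedness to force simple unimodular poles, the $\times 2$-recursion on coefficients together with the fact that an almost-periodic exponential sum tending to zero vanishes, and finally the F.\ and M.\ Riesz theorem --- directly produces $\mu_\psi=\nu+c\lambda$ with $\nu$ finitely supported and $c\geq 0$, and your subsequent replacement of $c\lambda$ by $c$ times the uniform measure on the $p$-th roots of unity for a large prime $p$ coprime to $6$ is exactly what is needed (and is the same device the paper already uses in its separate constant case). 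In short, you trade the paper's ergodic-theoretic lemma for a harmonic-analytic one, obtain it without extremeness, and as a byproduct close the gap in the paper's handling of the case $c>0$; the cost is that your lemma carries a genuinely nontrivial burden (the almost-periodicity argument and the appeal to F.\ and M.\ Riesz) that would need to be written out fully, as you yourself flag.
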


\begin{conjecture} \label{conj.comp-3} Suppose that $(\psi_n)_{n \in \mathbb{N}}$ is a sequence of rational extreme elements of $\operatorname{Cara}(\mathbb{D} ; 2,3)$, and assume that number of poles of $\psi_n$ diverges to infinity as $n \to \infty$. Then $(\psi_n)_{n \in \mathbb{N}}$ converges in the compact-uniform topology to the constant function with value $1$ as $n \to \infty$. \end{conjecture}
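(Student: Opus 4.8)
The plan is to use the Riesz--Herglotz representation to turn the statement into a concrete equidistribution question about orbits of the multiplicative semigroup generated by $2$ and $3$, and then to attack that question with exponential-sum estimates; throughout, write $e(x)=e^{2\pi i x}$. Recall first that every $\psi\in\operatorname{Cara}(\mathbb{D})$ has a unique representation $\psi(z)=\int_{\mathbb{T}}\frac{\omega+z}{\omega-z}\,\dee\mu(\omega)$ with $\mu\in\prob(\mathbb{T})$, and that this yields the bound $|\psi(z)|\le\frac{1+|z|}{1-|z|}$, so that $\operatorname{Cara}(\mathbb{D})$ is locally uniformly bounded and Montel's theorem applies. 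Since $\psi_n(0)=1$ for every $n$, compact-uniform convergence $\psi_n\to 1$ is equivalent to $c_k(\psi_n)\to 0$ for each fixed $k\ge 1$, where $c_k(\psi_n)$ denotes the $k$-th Taylor coefficient at $0$; expanding the Herglotz kernel gives $c_k(\psi_n)=2\int_{\mathbb{T}}\omega^{-k}\,\dee\mu_n(\omega)$, where $\mu_n$ is the measure representing $\psi_n$. So the statement reduces to showing $\widehat{\mu_n}(k)\to 0$ for each fixed $k\ge 1$.

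Next I would identify the $\mu_n$ concretely. Using the correspondence recorded earlier (rationality $\leftrightarrow$ finite support, poles $\leftrightarrow$ support points counted simply, extremality $\leftrightarrow$ ergodicity), each $\mu_n$ is the normalized counting measure on a single orbit $O_n$ of roots of unity under multiplication by $2$ and $3$, with $|O_n|$ equal to the number of poles of $\psi_n$; by hypothesis $|O_n|\to\infty$. Clearing the common factor of the support points one may write $O_n=\{\,e(y/q_n):y\in a_n H_n\,\}$ with $\gcd(q_n,6)=1$, $\gcd(a_n,q_n)=1$, $H_n=\langle 2,3\rangle\le(\mathbb{Z}/q_n\mathbb{Z})^\times$, and $N_n:=|H_n|=|O_n|\to\infty$. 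Hence $c_k(\psi_n)=\frac{2}{N_n}\sum_{g\in H_n}e(-ka_ng/q_n)$, a character sum over the multiplicative subgroup generated by $2$ and $3$.

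It then remains to prove that for each fixed $k\ge 1$ one has $\frac{1}{N_n}\sum_{g\in H_n}e(kg/q_n)\to 0$ as $N_n\to\infty$, uniformly in the residue $a_n$ (which is absorbed into the modulus after an elementary reduction). I would factor $q_n$ and, via the Chinese remainder theorem, write the sum as a product of local character sums over the images of $H_n$ in the prime-power components $(\mathbb{Z}/p^e\mathbb{Z})^\times$; large prime-power components contribute positive-density local subgroups and are harmless, so the crux is prime moduli $p$, where I would invoke sum--product and exponential-sum estimates over $\mathbb{F}_p$ in the style of Bourgain and Konyagin and related work, which give $\sum_{g\in H}e(g/p)=o(|H|)$ once $|H|$ exceeds a threshold of shape $(\log p)^{1+\delta}$; that threshold is reached using the elementary bound $|H_n|\ge\operatorname{ord}_p(2)\ge\log_2 p$ together with the additional multiplicative room furnished by $3\in H_n$.

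\emph{Main obstacle.} The genuine difficulty is the regime in which $q_n$ is composite with many small prime factors: then $N_n=|O_n|$ can be large while the orbit occupies only an arithmetically sparse portion of $\mathbb{Z}/q_n\mathbb{Z}$, and controlling $\sum_{g\in H_n}e(kg/q_n)$ there requires exponential-sum input beyond the prime-modulus theory --- this is exactly the content of the Bourgain--Lindenstrauss periodic equidistribution conjecture (Conjecture~\ref{conj.erg-3}). Short of that input, the argument above still yields the statement unconditionally when the moduli $q_n$ are prime with $|H_n|\ge(\log q_n)^{1+\delta}$, and conditionally --- for instance under GRH, via lower bounds of the shape $\operatorname{ord}_p(2)\gg p^{c}$ --- in general; and in all cases the complex-analytic reformulation is what exhibits the conjecture as a normal-families statement about the extreme points of $\operatorname{Cara}(\mathbb{D};2,3)$.
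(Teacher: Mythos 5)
Your proposal does not prove the statement, and it cannot as written: Conjecture \ref{conj.comp-3} is an open conjecture, and the paper does not prove it either --- it only proves (Theorem \ref{thm.3}) that it is \emph{equivalent} to the Bourgain--Lindenstrauss periodic equidistribution conjecture (Conjecture \ref{conj.erg-3}), which remains open. What you have actually written is one direction of that equivalence, by essentially the same route the paper takes: you pass through the Herglotz representation $\psi_n = \psi^{\mu_n}$, observe that compact-uniform convergence to the constant $1$ is equivalent to the vanishing of each fixed Taylor coefficient (the paper's Proposition \ref{prop.compunif} plus the Fourier--Taylor correspondence of Proposition \ref{prop.fou-tay}), and identify $\mu_n$ as the uniform measure on a single $\times(2,3)$-orbit of roots of unity with $\lvert \operatorname{supp}(\mu_n)\rvert$ equal to the number of poles (the paper's Propositions \ref{prop.uniformorbit} and \ref{prop.wellroot}). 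At that point the assertion $\widehat{\mu_n}(k)\to 0$ \emph{is} Conjecture \ref{conj.erg-3}, as you yourself note under ``Main obstacle.'' So the reduction is sound and matches the paper; the gap is that the reduction is the entire content of the paper's contribution here, and the remaining arithmetic statement is not something you close.

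The exponential-sum material in your third paragraph should be read as a survey of partial progress toward Conjecture \ref{conj.erg-3}, not as part of a proof. The Bourgain--Konyagin-type bounds you cite handle prime moduli $p$ only when the subgroup $\langle 2,3\rangle \le (\mathbb{Z}/p\mathbb{Z})^\times$ is larger than a power of $\log p$, and the unconditional lower bound $\operatorname{ord}_p(2)\ge \log_2 p$ sits exactly at the edge of that threshold rather than safely above it; more importantly, as you say, composite moduli with many small prime factors defeat the local-to-global factorization entirely. If your goal was to prove the conjecture, this is where the argument fails. If your goal was to establish the equivalence with Conjecture \ref{conj.erg-3}, you have done (one direction of) that, by the same mechanism as the paper, and you should state it as such rather than as a proof attempt for the conjecture itself.
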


\subsubsection{Tracial states on a number-theoretic $C^\ast$-algebra} \label{sec.traceintro}

In Section \ref{sec.traceintro} we formulate three operator-algebraic conjectures which will again be shown to be equivalent to the ergodic-theoretic conjectures from Section \ref{sec.ergintro}. Given a $C^\ast$-algebra $S$, we write $\operatorname{St}(S)$ for the set of all states on $S$, and $\operatorname{TSt}(S)$ for the set of all tracial states on $S$. We also define a sequence of states $(\Phi_n)_{n \in \mathbb{N}}$ on $S$ to \emph{converge pointwise} to a state $\Phi$ if we have $\lim_{n \to \infty} \Phi_n(s) = \Phi(s)$ for all $s \in S$, and we define an element $\Phi$ of $\operatorname{TSt}(S)$ to be \emph{extreme} if every expression $\Phi = t \Theta + (1-t)\Omega$ for $\Theta,\Omega \in \operatorname{TSt}(S)$ and $t \in [0,1]$ is trivial in the sense that $t \in \{0,1\}$. If $G$ is a countable discrete group, we adopt the notation $C^\ast(G)$ for the full group $C^\ast$-algebra of $G$, and we write $\upsilon_G:G \hookrightarrow C^\ast(G)$ for the canonical embedding of $G$ into the unitary group of $C^\ast(G)$. \\
\\
Let $n \in \mathbb{N}$ and suppose $a \in \mathbb{N}$ is a divisor of $n$. We define the $\times a$\emph{-automorphism of }$\mathbb{Z}[1/n]$ to be the map $\sf{T}_a: \mathbb{Z}[1/n] \to \mathbb{Z}[1/n]$ given by $\sf{T}_a(q) = aq$ for $q \in \mathbb{Z}[1/n]$. It is immediate that for any two divisors $a$ and $b$ of $n$, the automorphisms $\sf{T}_a$ and $\sf{T}_b$ commute. Thus we can define the \emph{Furstenberg group of type }$(2,3)$ to be the semidirect product $\mathfrak{F}_{2,3} = \mathbb{Z}^2 \ltimes \mathbb{Z}[1/6]$, where the two generators of $\mathbb{Z}^2$ acts on $\mathbb{Z}[1/6]$ according to $\sf{T}_2$ and $\sf{T}_3$ respectively. Thus the operation in $\mathfrak{F}_{2,3}$ is given explicitly for $j,k,\ell,m \in \mathbb{Z}$ and $p,q \in \mathbb{Z}[1/6]$ by: 
\begin{align*} (j,k;p) \star (\ell,m;q) \coloneqq (j+\ell, k+m; p + 2^j3^kq) \end{align*}
If $\mu$ were assumed to be $\times n$-invariant for a single $n$, the analogous group $\mathbb{Z} \ltimes \mathbb{Z}[1/n]$ would reduce to a Baumslag-Solitar group. The $C^\ast$-algebra $C^\ast(\mathfrak{F}_{2,3})$ comes equipped with the \emph{regular tracial state}, given by linear extension of the map $\Delta:\mathfrak{F}_{2,3} \to \{0,1\}$ defined for $j,k \in \mathbb{Z}$ and $q \in \mathbb{Z}[1/6]$ as follows. 
\begin{equation*} \Delta(j,k,q) = \begin{cases} 1 & \mbox{ if } j = k = q = 0 \\ 0 & \mbox{ else} \end{cases} \end{equation*}
We will continue to denote the regular tracial state on $C^\ast(\mathfrak{F}_{2,3})$ by $\Delta$. We will also need the following concept.

\begin{definition} Regarding $C^\ast(\mathbb{Z}[1/6])$  as a subalgebra of $C^\ast(\mathfrak{F}_{2,3})$, we define the \emph{conditional dimension} of a state $\Phi \in \operatorname{St}(\mathfrak{F}_{2,3})$ to be the dimension of the image of $C^\ast(\mathbb{Z}[1/6])$ in the GNS representation of $C^\ast(\mathfrak{F}_{2,3})$ associated with $\Phi$. If $\Phi$ is conditonally finite-dimensional, we write $\operatorname{cndm}(\Phi)$ for the natural number corresponding to the conditional dimension of $\Phi$.  \end{definition}

\begin{remark} Theorem 4.1 in \cite{MR626496} implies that there exists a conditional expectation $\mathbb{E}_{2,3}: C^\ast(\mathfrak{F}_{2,3}) \twoheadrightarrow C^\ast(\mathbb{Z}[1/6])$. Thus the dimension of a state $\Phi \in \operatorname{St}(C^\ast(\mathfrak{F}_{2,3}))$ may be alternatively defined as the dimension of the range of $\mathbb{E}_{2,3}$ in the GNS representation of $C^\ast(\mathfrak{F}_{2,3})$ associated with $\Phi$. \end{remark}

We now formulate our three operator-algebraic conjectures.

\begin{conjecture} \label{conj.see-1} Let $\Phi \in \operatorname{TSt}(C^\ast(\mathfrak{F}_{2,3}))$ be extreme. Then either $\Phi = \Delta$ or $\Phi$ is conditionally finite-dimensional. \end{conjecture}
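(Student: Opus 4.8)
The plan is to translate tracial states on $C^\ast(\mathfrak{F}_{2,3})$ into $\times(2,3)$-invariant measures on $\mathbb{T}$ and then apply Conjecture \ref{conj.furst}. Since $\mathfrak{F}_{2,3}=\mathbb{Z}^2\ltimes\mathbb{Z}[1/6]$ with $\mathbb{Z}^2$ amenable, there is a canonical identification $C^\ast(\mathfrak{F}_{2,3})\cong C(\widehat{\mathbb{Z}[1/6]})\rtimes\mathbb{Z}^2$, where $\widehat{\mathbb{Z}[1/6]}$ is the Pontryagin dual of $\mathbb{Z}[1/6]$ and $\mathbb{Z}^2$ acts through the duals of $\sf{T}_2$ and $\sf{T}_3$. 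Write $w_q=\upsilon_{\mathfrak{F}_{2,3}}(0,0;q)$ and $u_{j,k}=\upsilon_{\mathfrak{F}_{2,3}}(j,k;0)$. Given an extreme $\Phi\in\operatorname{TSt}(C^\ast(\mathfrak{F}_{2,3}))$, the trace identity $\Phi(u_{1,0}w_qu_{1,0}^{-1})=\Phi(w_q)$ together with the relation $u_{1,0}w_qu_{1,0}^{-1}=w_{2q}$ (and its $u_{0,1}$ analogue) shows that $\Phi|_{C^\ast(\mathbb{Z}[1/6])}$ is integration against a dual-$\mathbb{Z}^2$-invariant Borel probability measure $\nu_\Phi$ on $\widehat{\mathbb{Z}[1/6]}$. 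Restricting characters along $\mathbb{Z}\le\mathbb{Z}[1/6]$ yields a surjection $\widehat{\mathbb{Z}[1/6]}\twoheadrightarrow\mathbb{T}$; pushing $\nu_\Phi$ forward gives $\mu_\Phi\in\prob(\mathbb{T})$, and comparing Fourier coefficients — $\int_\mathbb{T}\omega^{2\ell}\,\dee\mu_\Phi=\Phi(w_{2\ell})=\Phi(w_\ell)=\int_\mathbb{T}\omega^{\ell}\,\dee\mu_\Phi$, similarly for $3$ — shows $\mu_\Phi\in\prob(\mathbb{T};2,3)$. I would first prove that $\Phi\mapsto\mu_\Phi$ is an affine bijection onto $\prob(\mathbb{T};2,3)$: surjectivity comes from identifying $\widehat{\mathbb{Z}[1/6]}$ with the natural extension (the ``$\times2\times3$ solenoid'') of $(\mathbb{T},\times2,\times3)$, so that every $\mu\in\prob(\mathbb{T};2,3)$ lifts to a unique invariant $\nu$ on $\widehat{\mathbb{Z}[1/6]}$ whose associated canonical crossed-product trace restricts to $\mu$; injectivity amounts to showing a tracial state is determined by its diagonal restriction, which I would deduce from a mean-ergodic estimate on the orbit sums $\frac1N\sum_{n<N}w_{2^nq}$ (and their $3$-analogues) together with essential freeness of the dual action away from its finite fixed-point sets.

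Granting this dictionary, the distinguished classes match up. A tracial state $\Phi$ is extreme exactly when $\mu_\Phi$ is ergodic, since the ergodic decomposition of $\mu_\Phi$ corresponds, via the functorial lift, to a decomposition of $\Phi$ into traces. The regular trace $\Delta$, for which $\Delta(w_q)=\delta_{q,0}$, corresponds to normalized Haar measure on $\widehat{\mathbb{Z}[1/6]}$ and hence to $\lambda$. Finally, $\Phi$ is conditionally finite-dimensional precisely when $\nu_\Phi$ is finitely supported, because in the GNS representation the von Neumann algebra generated by $C^\ast(\mathbb{Z}[1/6])$ is $L^\infty(\widehat{\mathbb{Z}[1/6]},\nu_\Phi)$, which is finite-dimensional exactly in that case; and an invariant measure on $\widehat{\mathbb{Z}[1/6]}$ is finitely supported iff its image on $\mathbb{T}$ is, since the finitely supported elements of $\prob(\mathbb{T};2,3)$ are the uniform measures on finite sets of roots of unity of order coprime to $6$, which are exactly the finite orbits of finite-order characters of $\mathbb{Z}[1/6]$.

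To conclude: let $\Phi\in\operatorname{TSt}(C^\ast(\mathfrak{F}_{2,3}))$ be extreme, so that $\mu_\Phi$ is an ergodic element of $\prob(\mathbb{T};2,3)$. By Conjecture \ref{conj.furst}, either $\mu_\Phi=\lambda$ or $\operatorname{supp}(\mu_\Phi)$ is finite. In the first case $\nu_\Phi$ is Haar measure, and since the dual $\mathbb{Z}^2$-action on $(\widehat{\mathbb{Z}[1/6]},\mathrm{Haar})$ is essentially free, measure-preserving and ergodic, its crossed product is the hyperfinite $\mathrm{II}_1$ factor, which carries a unique tracial state; hence $\Phi$ is the unique tracial state restricting to Haar, namely $\Delta$. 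In the second case $\nu_\Phi$ is finitely supported, so $\Phi$ is conditionally finite-dimensional, with $\operatorname{cndm}(\Phi)=\lvert\operatorname{supp}(\nu_\Phi)\rvert$. This is the stated dichotomy. (Running the dictionary in reverse shows that Conjecture \ref{conj.see-1} conversely implies Conjecture \ref{conj.furst}, so the two are in fact equivalent.)

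The main obstacle is that the passage ``$\mu_\Phi$ ergodic $\Rightarrow$ $\mu_\Phi=\lambda$ or $\operatorname{supp}(\mu_\Phi)$ finite'' used above is precisely Furstenberg's Conjecture \ref{conj.furst}, which the argument consumes as its arithmetic input, so Conjecture \ref{conj.see-1} is proved conditionally on — indeed, equivalently to — it. Among the unconditional ingredients, the most delicate is the injectivity half of the bijection $\Phi\mapsto\mu_\Phi$, i.e. forcing $\Phi(w_q u_{j,k})=0$ for $(j,k)\ne(0,0)$: the dual $\mathbb{Z}^2$-action fails to be essentially free exactly on the atomic invariant measures, so on that locus the standard freeness argument must be replaced by a direct mean-ergodic estimate.
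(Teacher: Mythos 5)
Your overall strategy --- push $\Phi$ down to a $\times(2,3)$-invariant measure on $\mathbb{T}$, apply Conjecture \ref{conj.furst}, and translate back --- is the same as the paper's, and your treatment of the two end cases is essentially sound: the essential-freeness argument showing that a trace restricting to Haar measure on $\widehat{\mathbb{Z}[1/6]}$ must vanish off the diagonal (hence equal $\Delta$) is a legitimate alternative to the paper's route via recoverability, and the finitely-supported case does yield conditional finite-dimensionality. The fatal problem is your claim that $\Phi\mapsto\mu_\Phi$ is a bijection onto $\prob(\mathbb{T};2,3)$. Injectivity is false, and not for a reason any mean-ergodic estimate can repair: every character of the abelian quotient $\mathfrak{F}_{2,3}/\mathbb{Z}[1/6]\cong\mathbb{Z}^2$, i.e.\ every point of $\mathbb{T}^2$, pulls back to an extreme tracial state of $C^\ast(\mathfrak{F}_{2,3})$ whose restriction to $C^\ast(\mathbb{Z}[1/6])$ is the point mass at the trivial character, so uncountably many distinct extreme traces all have $\mu_\Phi=\boldsymbol{\delta}_1$. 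More generally, whenever $\nu_\Phi$ is atomic the stabilizers in $\mathbb{Z}^2$ are infinite and there genuinely exist traces with $\Phi(w_qu_{j,k})\neq 0$ for some $(j,k)\neq(0,0)$. The paper concedes exactly this point (Remark \ref{rem.partinv}) and never claims a bijection.

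Because of this, two of your steps are unsupported as written. First, the equivalence ``$\Phi$ extreme $\iff$ $\mu_\Phi$ ergodic'' is justified by appeal to the functorial lift, i.e.\ to $\Phi=\Phi^{\mu_\Phi}$, which fails for non-diagonal traces; the paper's Proposition \ref{prop.convexity-well} needs the Levit--Vigdorovich classification of extreme characters on metabelian groups (Theorem \ref{thm.induction}) together with a disintegration argument to establish the direction you actually use. Second, and more importantly, your dichotomy only addresses the diagonal part of $\Phi$: you must still handle an extreme trace that does not vanish on elements $(j,k;q)$ with $(j,k)\neq(0,0)$. The paper's fix is to show (again via the induction theorem; see Proposition \ref{prop.abelian} and Corollary \ref{cor.condim-1}) that any such ``non-recoverable'' extreme trace satisfies $\Phi(0,0;n)=1$ for some nonzero $n$ and is therefore automatically conditionally finite-dimensional, so the conclusion of Conjecture \ref{conj.see-1} holds for it regardless of Furstenberg's conjecture; the remaining extreme traces are recoverable and your dictionary then applies verbatim. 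Your proposal needs an ingredient of this kind; without it the argument only proves the conjecture for traces that are already assumed to be supported on $\mathbb{Z}[1/6]$.
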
 

\begin{conjecture} \label{conj.see-2}  Every element of $\operatorname{TSt}(C^\ast(\mathfrak{F}_{2,3})$ is a pointwise limit of conditionally finite-dimensional elements of $\operatorname{TSt}(C^\ast(\mathfrak{F}_{2,3}))$. \end{conjecture}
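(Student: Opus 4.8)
The plan is to deduce Conjecture~\ref{conj.see-2} from the Periodic Approximation Conjecture~\ref{conj.erg-2}, which (being stated above) I will take as a hypothesis; the substance of the argument is a structural description of an arbitrary tracial state on $C^\ast(\mathfrak{F}_{2,3})$ as a $\times(2,3)$-invariant measure on $\mathbb{T}$ together with a rigid ``finite-type'' remainder.

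First I would set up the dictionary. Write $\Sigma = \widehat{\mathbb{Z}[1/6]}$ for the $6$-adic solenoid and identify $C^\ast(\mathbb{Z}[1/6])$ with $C(\Sigma)$, so that $C^\ast(\mathfrak{F}_{2,3}) \cong C(\Sigma) \rtimes \mathbb{Z}^2$, the generators of $\mathbb{Z}^2$ acting by the homeomorphisms $\sigma_2,\sigma_3$ of $\Sigma$ dual to $\mathsf{T}_2,\mathsf{T}_3$. The inclusion $\mathbb{Z}\hookrightarrow\mathbb{Z}[1/6]$ dualizes to a continuous surjection $q\colon\Sigma\to\mathbb{T}$ intertwining $(\sigma_2,\sigma_3)$ with $(\omega\mapsto\omega^2,\ \omega\mapsto\omega^3)$, and natural extension theory makes $q_\ast$ an affine homeomorphism from the simplex of $(\sigma_2,\sigma_3)$-invariant probability measures on $\Sigma$ onto $\prob(\mathbb{T};2,3)$. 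For $\Phi\in\operatorname{TSt}(C^\ast(\mathfrak{F}_{2,3}))$ the trace property forces its restriction to $C(\Sigma)$ to be $(\sigma_2,\sigma_3)$-invariant; write $\widetilde\mu_\Phi$ for this solenoid measure and $\mu_\Phi=q_\ast\widetilde\mu_\Phi\in\prob(\mathbb{T};2,3)$. Since the GNS vector of a trace is separating for $\pi_\Phi(C(\Sigma))''$, I expect to obtain $\operatorname{cndm}(\Phi)=\lvert\operatorname{supp}\widetilde\mu_\Phi\rvert$ whenever the latter is finite --- so that $\Phi$ is conditionally finite-dimensional exactly when $\mu_\Phi$ is finitely supported --- and, writing $\Phi_\mu\coloneqq\widetilde\mu\circ\mathbb{E}_{2,3}$ for $\mu\in\prob(\mathbb{T};2,3)$ (with $\widetilde\mu$ the solenoid lift and $\mathbb{E}_{2,3}$ the conditional expectation of the preceding remark), the map $\mu\mapsto\Phi_\mu$ is affine, carries the vague topology to the pointwise one, satisfies $\mu_{\Phi_\mu}=\mu$, and sends $\lambda$ to $\Delta$.

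Next I would show that, although $\Phi$ need not equal $\Phi_{\mu_\Phi}$, its off-diagonal part is rigid. For $(j,k)\neq(0,0)$, conjugating $(j,k;q)$ by $(0,0;p)$ gives $\Phi(j,k;q)=\Phi\bigl(j,k;q+p(1-2^j3^k)\bigr)$ for all $p\in\mathbb{Z}[1/6]$, so $f\mapsto\Phi\bigl(f\,\upsilon_{\mathfrak{F}_{2,3}}(j,k;0)\bigr)$ factors through a finite quotient of $C(\Sigma)$ and is therefore a complex measure supported on the finite set of roots of unity of order dividing $1-2^j3^k$ (an order coprime to $6$); a Cauchy--Schwarz estimate on the GNS form then shows this measure is dominated by $\widetilde\mu_\Phi$, hence is carried by atoms of $\widetilde\mu_\Phi$. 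In particular, if $\widetilde\mu_\Phi$ is nonatomic then all off-diagonal parts vanish and $\Phi=\Phi_{\mu_\Phi}$. In general I would split $\widetilde\mu_\Phi=\widetilde\mu_\Phi^{\mathrm{at}}+\widetilde\mu_\Phi^{\mathrm{na}}$ into atomic and nonatomic parts (each $(\sigma_2,\sigma_3)$-invariant; note that invariance forces the atoms into finite $\mathbb{Z}^2$-orbits, which $q$ sends to roots of unity of order coprime to $6$). The indicator of the union of these finite orbits is a $(\sigma_2,\sigma_3)$-invariant Borel set and hence a central projection in $\pi_\Phi(C^\ast(\mathfrak{F}_{2,3}))''$, splitting $\Phi=\Phi^{\mathrm{at}}+\Phi^{\mathrm{na}}$ into corner traces; by the above $\Phi^{\mathrm{na}}$ equals $\Phi_{q_\ast\widetilde\mu_\Phi^{\mathrm{na}}}$ up to a scalar, while $\Phi^{\mathrm{at}}$ is the pointwise limit of its corners $\Phi^{\mathrm{at}}_n$ over the $n$ largest atomic orbits, each $\Phi^{\mathrm{at}}_n$ being conditionally finite-dimensional because its diagonal is supported on finitely many atoms.

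To finish, I would apply Conjecture~\ref{conj.erg-2} to the normalization of $q_\ast\widetilde\mu_\Phi^{\mathrm{na}}$ to obtain finitely supported $\nu_n\in\prob(\mathbb{T};2,3)$ converging vaguely to it, so that the rescaled $\Phi_{\nu_n}$ are conditionally finite-dimensional and converge pointwise to $\Phi^{\mathrm{na}}$; then $\Phi^{\mathrm{at}}_n+\lVert\widetilde\mu_\Phi^{\mathrm{na}}\rVert\,\Phi_{\nu_n}$ is a sequence of conditionally finite-dimensional positive tracial functionals (a sum of two such is such, since the image of $C(\Sigma)$ in a subrepresentation of a direct sum has dimension at most the sum of the two) of total mass tending to $1$ and converging pointwise to $\Phi$; normalizing gives the required conditionally finite-dimensional tracial states. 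The step I expect to be the main obstacle is precisely this structural analysis --- the identification $\operatorname{cndm}(\Phi)=\lvert\operatorname{supp}\widetilde\mu_\Phi\rvert$, the rigidity and domination of the off-diagonal measures, and the verification that the recombinations above remain positive; and of course, if one declines to assume Conjecture~\ref{conj.erg-2}, then it --- being a periodic-approximation strengthening in the orbit of Furstenberg's conjecture --- is itself the genuine content, the remainder being a careful but routine translation.
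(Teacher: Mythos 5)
Your argument is correct as a conditional proof (deducing Conjecture \ref{conj.see-2} from Conjecture \ref{conj.erg-2}, which is exactly the content of the relevant direction of Theorem \ref{thm.2}), but it takes a genuinely different route from the paper's. The paper first reduces, via Choquet decomposition of the trace simplex, to the case of an \emph{extreme} tracial state, and then invokes the Levit--Vigdorovich induction theorem for characters of metabelian groups (Theorem \ref{thm.induction}) to show that any extreme state that is not ``recoverable'' (i.e.\ does not vanish off the subgroup $\mathbb{Z}[1/6]$) is automatically conditionally finite-dimensional; the remaining recoverable extreme states are in bijection with measures and are handled by Conjecture \ref{conj.erg-2} together with the continuity of $\mu \mapsto \Phi^\mu$. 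You instead work with an arbitrary trace and establish the rigidity of the off-diagonal components directly: the conjugation identity $\Phi(j,k;q)=\Phi\bigl(j,k;q+p(1-2^j3^k)\bigr)$ plus Cauchy--Schwarz shows each off-diagonal functional is a complex measure carried by finitely many torsion points and absolutely continuous with respect to the diagonal measure, so it lives on atoms; the atomic/nonatomic central decomposition then isolates a recoverable corner (to which Conjecture \ref{conj.erg-2} applies) from a purely atomic corner that is already a pointwise limit of its finite truncations. What your approach buys is the avoidance of both the reduction to extreme states and the induction-of-characters machinery, and it handles all $(j,k)\neq(0,0)$ uniformly (the paper's Proposition \ref{prop.abelian} is stated only for $jk\neq 0$); the cost is more von Neumann--algebraic bookkeeping (separating GNS vector, central projections, positivity of the recombined functionals), all of which you correctly identify and which does go through. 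Your sharper identification $\operatorname{cndm}(\Phi)=\lvert\operatorname{supp}\widetilde\mu_\Phi\rvert$ on the solenoid is also consistent with, and refines, the two-sided bound of Proposition \ref{prop.conddim}.
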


\begin{conjecture} \label{conj.see-3}  Suppose that $(\Phi_n)_{n \in \mathbb{N}}$ is a sequence of extreme conditionally finite-dimensional elements of $\operatorname{TSt}(C^\ast(\mathfrak{F}_{2,3}))$ and assume that $\lim_{n \to \infty}\operatorname{cndm}(\Phi_n) = \infty$. Then $(\Phi_n)_{n \in \mathbb{N}}$ converges pointwise to $\Delta$ as $n \to \infty$. \end{conjecture}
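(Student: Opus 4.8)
\emph{Proof plan.} The plan is to prove Conjecture~\ref{conj.see-3} by establishing a translation lemma between tracial states on $C^\ast(\mathfrak{F}_{2,3})$ and $\times(2,3)$-invariant measures on $\mathbb{T}$, and then invoking the corresponding equidistribution statement on the circle. The first step is to produce, for each $\Phi\in\operatorname{TSt}(C^\ast(\mathfrak{F}_{2,3}))$, a measure $\Theta(\Phi)\in\prob(\mathbb{T};2,3)$. Restricting $\Phi$ to the abelian subalgebra $C^\ast(\mathbb{Z}[1/6])$, Bochner's theorem represents the restriction by a Borel probability measure on the Pontryagin dual $\widehat{\mathbb{Z}[1/6]}$, the $6$-solenoid, and the trace identity $\Phi(gh)=\Phi(hg)$ forces that measure to be invariant under the dual automorphisms $\widehat{\sf{T}_2}$ and $\widehat{\sf{T}_3}$. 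The projection $\widehat{\mathbb{Z}[1/6]}\twoheadrightarrow\widehat{\mathbb{Z}}=\mathbb{T}$ dual to the inclusion $\mathbb{Z}\hookrightarrow\mathbb{Z}[1/6]$ intertwines $\widehat{\sf{T}_a}$ with the map $z\mapsto z^a$ for $a\in\{2,3\}$, so the push-forward $\Theta(\Phi)$ is $\times(2,3)$-invariant on $\mathbb{T}$. The map $\Theta$ is affine and continuous from the pointwise topology to the vague topology, and it sends the regular trace $\Delta$ to Lebesgue measure $\lambda$, since the restriction of $\Delta$ to $C^\ast(\mathbb{Z}[1/6])$ is the function $q\mapsto\delta_{q,0}$, whose Bochner measure is normalized Haar measure on the solenoid, which projects to $\lambda$.

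The substance of the translation lemma is that $\Theta$ is a bijection onto $\prob(\mathbb{T};2,3)$ matching the relevant adjectives: extreme tracial states correspond to ergodic measures, conditionally finite-dimensional states correspond to finitely supported measures, and $\operatorname{cndm}(\Phi)=\lvert\operatorname{supp}(\Theta(\Phi))\rvert$. For the inverse one takes $\mu\in\prob(\mathbb{T};2,3)$, lifts it to the unique $(\widehat{\sf{T}_2},\widehat{\sf{T}_3})$-invariant measure $\tilde\mu$ on the solenoid projecting to $\mu$ (the natural extension; uniqueness holds because on $\widehat{\mathbb{Z}[1/6]}$ the maps $\widehat{\sf{T}_2},\widehat{\sf{T}_3}$ are invertible), reads off the associated trace $\tau_0$ on $C^\ast(\mathbb{Z}[1/6])$, and extends it by $\tau_0\circ\mathbb{E}_{2,3}$ using the conditional expectation $\mathbb{E}_{2,3}\colon C^\ast(\mathfrak{F}_{2,3})\twoheadrightarrow C^\ast(\mathbb{Z}[1/6])$ supplied by the remark above; one checks that $\mathbb{Z}^2$-invariance of $\tilde\mu$ makes this extension tracial. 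That every tracial state arises this way, i.e.\ $\Phi=\Phi|_{C^\ast(\mathbb{Z}[1/6])}\circ\mathbb{E}_{2,3}$, is the one genuinely operator-algebraic point: it follows from the computation that conjugation inside $\mathfrak{F}_{2,3}$ moves any group element with nontrivial $\mathbb{Z}^2$-component through an infinite subgroup-worth of translates together with a mean-ergodic averaging argument along a F\o lner sequence in the corresponding copy of $\mathbb{Z}[1/6]$. The dimension count is then the observation that the GNS space of $\Phi$ over $C^\ast(\mathbb{Z}[1/6])$ is the $L^2$ space of the Bochner measure, whose Hilbert-space dimension is the cardinality of the (necessarily finite and $\langle2,3\rangle$-invariant) support when finite, and this support maps bijectively onto $\operatorname{supp}(\Theta(\Phi))$ under the solenoid-to-circle projection.

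With the lemma in hand, Conjecture~\ref{conj.see-3} follows: given extreme conditionally finite-dimensional $\Phi_n$ with $\operatorname{cndm}(\Phi_n)\to\infty$, the measures $\mu_n=\Theta(\Phi_n)$ are ergodic, finitely supported, with $\lvert\operatorname{supp}(\mu_n)\rvert\to\infty$, so by Conjecture~\ref{conj.erg-3} they converge vaguely to $\lambda$; since $\operatorname{TSt}(C^\ast(\mathfrak{F}_{2,3}))$ is weak-$\ast$ compact and $\Theta$ is a continuous bijection carrying $\Delta$ to $\lambda$, every subsequential limit $\Phi'$ of $(\Phi_n)$ satisfies $\Theta(\Phi')=\lambda=\Theta(\Delta)$, whence $\Phi'=\Delta$ and $\Phi_n\to\Delta$ pointwise. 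The main obstacle is that the circle statement invoked in this last step, Conjecture~\ref{conj.erg-3}, is itself open (this is Bourgain's question in the form given by Lindenstrauss): the finitely supported ergodic $\times(2,3)$-invariant measures are precisely the normalized counting measures on orbits $\{2^i3^j a/N\bmod 1\}$ with $\gcd(N,6)=1$, and proving their equidistribution amounts to a uniform cancellation estimate for the exponential sums $\lvert\operatorname{supp}(\mu_n)\rvert^{-1}\sum_{x\in\operatorname{supp}(\mu_n)}e^{2\pi i\xi x}$ as the orbit grows, which is known only in special ranges (boundedly many prime factors of $N$, via sum-product bounds) or under the generalized Riemann hypothesis, while the positive-entropy rigidity of Rudolph and Johnson contributes nothing here because these measures have zero entropy. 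Thus the argument above proves Conjecture~\ref{conj.see-3} conditionally on Conjecture~\ref{conj.erg-3} and reduces the former to the latter; an unconditional proof would require a new exponential-sum estimate on the circle side, which I do not expect to obtain here.
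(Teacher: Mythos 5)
Your framing is right: the statement is a conjecture, and what can actually be proved is the implication from Conjecture~\ref{conj.erg-3}, which is what the paper does and what you attempt. Your translation map $\Theta$ is the paper's $\Phi \mapsto \mu_\Phi$, and the dimension--support comparison you want is Proposition~\ref{prop.conddim}. But there is a genuine gap at the heart of your lemma: the claim that every tracial state satisfies $\Phi = \Phi|_{C^\ast(\mathbb{Z}[1/6])} \circ \mathbb{E}_{2,3}$, i.e.\ vanishes off the normal subgroup $\mathbb{Z}[1/6]$, is false. The commutator subgroup of $\mathfrak{F}_{2,3}$ contains $(1-2)\mathbb{Z}[1/6]+(1-3)\mathbb{Z}[1/6]=\mathbb{Z}[1/6]$, so the abelianization is $\mathbb{Z}^2$, and any nontrivial character of $\mathbb{Z}^2$ pulls back to an extreme tracial state that is identically $1$ on $\mathbb{Z}[1/6]$ and nonzero on elements with $(j,k)\neq(0,0)$. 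More generally, characters induced from finite-index subgroups $M=(M\cap\mathbb{Z}^2)\ltimes\mathbb{Z}[1/6]$ whose restriction to $\mathbb{Z}[1/6]$ factors through a finite quotient are tracial, do not factor through $\mathbb{E}_{2,3}$, and can have arbitrarily large conditional dimension. Your proposed averaging argument fails for exactly these states: conjugating $(j,k;0)$ by $(0,0;p)$ produces $(j,k;(1-2^j3^k)p)$, and if the restriction of $\Phi$ to $\mathbb{Z}[1/6]$ is constant on the coset $(1-2^j3^k)\mathbb{Z}[1/6]$ (as it is when $\Phi$ is supported on a finite quotient), the F\o lner average does not decay. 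Consequently $\Theta$ is not injective, the identification $\operatorname{cndm}(\Phi)=\lvert\operatorname{supp}(\Theta(\Phi))\rvert$ is unjustified (the paper only establishes $\lvert\operatorname{supp}\rvert\le\operatorname{cndm}\le 6^{\lvert\operatorname{supp}\rvert}$, which does suffice), and your final compactness step --- concluding $\Phi'=\Delta$ from $\Theta(\Phi')=\lambda$ --- is unsupported as written.

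What is missing is precisely the paper's treatment of non-recoverable states. Using the Levit--Vigdorovich classification of extreme characters on metabelian groups (Theorem~\ref{thm.induction}), the paper shows (Proposition~\ref{prop.abelian}, Corollary~\ref{cor.condim-1}) that if an extreme $\Phi$ takes a nonzero value at some $(j,k;q)$ with $jk\neq 0$, then $\Phi(0,0;n)=1$ for some $n\le 2^{|j|}3^{|k|}$, forcing $\operatorname{cndm}(\Phi)\le 2^{|j|}3^{|k|}$. Hence along a sequence with $\operatorname{cndm}(\Phi_n)\to\infty$, the off-subgroup values $\Phi_n(j,k;q)$ are eventually zero for each fixed $(j,k)$ with $jk\neq 0$, while the values on $\mathbb{Z}[1/6]$ are controlled by Conjecture~\ref{conj.erg-3} applied to $\mu_{\Phi_n}$ exactly as you describe. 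With that supplement (and the resulting fact that any pointwise limit with $\mu_{\Phi'}=\lambda$ is recoverable and hence equals $\Delta$), your conditional deduction goes through; without it, the argument does not close.
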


\subsection{Statement of results}

The purpose of this article is to prove the following results.

\begin{theorem} \label{thm.1} Conjectures \ref{conj.erg-1}, \ref{conj.comp-1} and \ref{conj.see-1} are equivalent.
	
\end{theorem}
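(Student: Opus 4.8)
The three statements all have the same logical shape — "every extreme point of a compact convex set is of a simple type" — so the strategy is to build explicit affine homeomorphisms between the relevant convex sets that carry extreme points to extreme points and the "simple" points to "simple" points. Concretely, I would first establish a correspondence between $\prob(\mathbb{T})$ and $\operatorname{Cara}(\mathbb{D})$ via the Herglotz representation: to a measure $\mu$ associate the Herglotz transform $\psi_\mu(z) = \int_\mathbb{T} \frac{\omega+z}{\omega-z}\,\dee\mu(\omega)$, which is holomorphic on $\mathbb{D}$, has positive real part (its real part is the Poisson integral of $\mu$), and satisfies $\psi_\mu(0)=1$; conversely every \cara function arises this way from a unique $\mu$. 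This map is affine and a homeomorphism from $\prob(\mathbb{T})$ with the vague topology onto $\operatorname{Cara}(\mathbb{D})$ with the compact-uniform topology, since weak-$*$ convergence of measures corresponds to pointwise (hence, by normality, compact-uniform) convergence of Herglotz transforms.

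The key computation is that the Herglotz transform intertwines $\times n$-invariance with $\times n$-circularity. One checks directly that $\psi_{\mu}(z^n) = \int_\mathbb{T} \frac{\omega + z^n}{\omega - z^n}\,\dee\mu(\omega)$ and that $\frac1n\sum_{k=0}^{n-1}\psi_\mu(e^{2\pi i k/n}z) = \int_\mathbb{T}\Big(\frac1n\sum_{k=0}^{n-1}\frac{\omega + e^{2\pi ik/n}z}{\omega - e^{2\pi ik/n}z}\Big)\dee\mu(\omega)$, and the inner average equals $\frac{\omega^n + z^n}{\omega^n - z^n}$ by a partial-fractions / roots-of-unity identity (this is the finite-Blaschke analogue of $\prod_{k}(\omega - e^{2\pi ik/n}z) = \omega^n - z^n$). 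Hence $\psi_\mu$ is $\times n$-circular if and only if $\int_\mathbb{T} \frac{\omega^n+z^n}{\omega^n-z^n}\dee\mu(\omega) = \int_\mathbb{T}\frac{\omega+z^n}{\omega-z^n}\dee\mu(\omega)$ for all $z$, and since $\{z^n : z \in \mathbb{D}\} = \mathbb{D}$ and Herglotz transforms are injective, this is exactly the condition that the pushforward of $\mu$ under $\omega \mapsto \omega^n$ equals $\mu$, i.e. $\times n$-invariance. Thus the Herglotz transform restricts to an affine homeomorphism $\prob(\mathbb{T};2,3) \to \operatorname{Cara}(\mathbb{D};2,3)$, carrying ergodic measures to extreme \cara functions. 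It remains to match "$\operatorname{supp}(\mu)$ finite or $\mu = \lambda$" with "$\psi$ rational": a finitely supported $\mu$ has Herglotz transform a finite sum $\sum_j c_j \frac{\omega_j + z}{\omega_j - z}$, a rational function; conversely a rational \cara function has, by partial fractions together with the positivity constraint forcing simple poles on $\mathbb{T}$ with positive residues, exactly this form, so $\mu$ is finitely supported. And $\mu = \lambda$ corresponds to $\psi \equiv 1$, which is rational — so on the Furstenberg side "finite support or Lebesgue" is subsumed by "rational" (every finite-support measure already gives a rational function). This shows Conjecture \ref{conj.erg-1} $\Leftrightarrow$ Conjecture \ref{conj.comp-1}; one should double-check the reverse implication carefully, namely that a rational extreme element of $\operatorname{Cara}(\mathbb{D};2,3)$ which is \emph{not} finitely-supported-type cannot occur, but rationality already pins the measure down to finite support unless the function is constant.

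For the equivalence with Conjecture \ref{conj.see-1}, I would use that tracial states on $C^\ast(\fgroup)$ correspond to conjugation-invariant positive-definite functions on $\fgroup = \mathbb{Z}^2 \ltimes \mathbb{Z}[1/6]$, and restrict attention to the abelian normal subgroup $\mathbb{Z}[1/6]$: by Bochner/Pontryagin duality, a positive-definite function on $\mathbb{Z}[1/6]$ is the Fourier transform of a probability measure on $\widehat{\mathbb{Z}[1/6]}$. The restriction of a tracial state to $C^\ast(\mathbb{Z}[1/6])$ is invariant under the $\mathbb{Z}^2$-action, which dualizes to the $\times 2, \times 3$ maps on the Pontryagin dual; the dual of $\mathbb{Z}[1/6]$ is a solenoid, but the circle $\mathbb{T}$ sits inside it, and the relevant point is that — after checking that a tracial state is determined by its restriction to $C^\ast(\mathbb{Z}[1/6])$ together with the trace property, and that the $\times(2,3)$-invariant measures on the solenoid dual are carried by / correspond to $\times(2,3)$-invariant measures on $\mathbb{T}$ (using that $2,3$ are units in $\mathbb{Z}[1/6]$, so the dual action is by automorphisms and the ergodic theory reduces to the circle) — one gets an affine homeomorphism $\operatorname{TSt}(C^\ast(\fgroup)) \to \prob(\mathbb{T};2,3)$. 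Under this correspondence the regular trace $\Delta$ matches $\lambda$ (its restriction to $\mathbb{Z}[1/6]$ is the indicator of $0$, whose Fourier transform is Haar measure on the dual, which pushes to $\lambda$), extreme traces match ergodic measures, and conditional finite-dimensionality — the dimension of the image of $C^\ast(\mathbb{Z}[1/6])$ in the GNS representation — matches finiteness of $\operatorname{supp}(\mu)$, since that image is the von Neumann algebra $L^\infty(\operatorname{supp}\mu)$-type algebra generated by the cyclic vector, finite-dimensional exactly when the spectral measure is finitely supported. This yields Conjecture \ref{conj.erg-1} $\Leftrightarrow$ Conjecture \ref{conj.see-1}, completing the triangle.

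\textbf{Main obstacle.} The complex-analytic equivalence is essentially classical Herglotz theory plus a clean roots-of-unity identity, and I expect it to go through smoothly. The real work is the operator-algebraic side: one must show that a tracial state on $C^\ast(\fgroup)$ is \emph{determined} by — and can be reconstructed from — its restriction to the abelian subalgebra $C^\ast(\mathbb{Z}[1/6])$, that extreme traces go to ergodic measures in both directions (ergodic decomposition of the invariant measure must be shown to lift to a tracial decomposition, which is where the conditional expectation $\mathbb{E}_{2,3}$ from the remark and a disintegration argument enter), and that the passage from the full Pontryagin dual solenoid $\widehat{\mathbb{Z}[1/6]}$ down to the circle $\mathbb{T}$ is harmless at the level of invariant measures and their ergodic theory. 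Matching $\operatorname{cndm}(\Phi)$ precisely with $|\operatorname{supp}(\mu)|$ — not just "finite iff finite" but in a way compatible with Conjectures \ref{conj.see-3}/\ref{conj.erg-3} later — is the delicate bookkeeping step, though for Theorem \ref{thm.1} only the qualitative "finite-dimensional iff finite support" is needed.
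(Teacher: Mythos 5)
Your plan for the complex-analytic half follows the paper closely (Herglotz correspondence, extreme = ergodic, finite support = rational), and your kernel-averaging identity $\frac1n\sum_{k}\mathcal{H}(\omega, e^{2\pi ik/n}z) = \mathcal{H}(\omega^n, z^n)$ is a clean alternative to the paper's matching of Fourier and Taylor coefficients. But there are two genuine gaps.

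First, your claim that ``rationality already pins the measure down to finite support unless the function is constant'' is false. The function $\psi(z) = 1 + \epsilon z$ with $0 < \epsilon < 1$ is rational and Carath\'eodory, yet $\mu_\psi$ is the absolutely continuous measure $(1+\epsilon\cos\theta)\,\dee\lambda$, which is not finitely supported. Your ``partial fractions with simple poles on $\mathbb{T}$'' picture fails because a rational Carath\'eodory function need not have any poles at all on $\overline{\mathbb{D}}$; the polynomial tail contributes an absolutely continuous component. The paper closes this gap via Propositions \ref{prop.singular} and \ref{prop.wellroot}: extremeness and nonconstancy force $\mu_\psi$ to be mutually singular with $\lambda$ (this uses the pointwise ergodic theorem for $\mathbb{N}^2$-actions), and mutual singularity plus Fatou's radial-limit theorem forces $\mu_\psi$ to concentrate where $\psi$ blows up, which for rational $\psi$ is a finite set. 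So the conclusion you need is not a consequence of rationality alone; it really requires the ergodicity.

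Second, and more seriously, you premise the operator-algebraic half on showing ``a tracial state on $C^\ast(\mathfrak{F}_{2,3})$ is determined by its restriction to $C^\ast(\mathbb{Z}[1/6])$.'' This is false, and you cannot establish it. There exist characters on $\mathfrak{F}_{2,3}$ that do not vanish off the normal subgroup $\mathbb{Z}[1/6]$ (the paper calls the corresponding states non-recoverable, Remark \ref{rem.partinv}), so the map $\Phi \mapsto \mu_\Phi$ is genuinely many-to-one. You flag this as the ``real work'' but implicitly assume it can be done; the correct fix is not to prove determinacy but to get around its failure. The paper does this via Theorem \ref{thm.induction} (Levit--Vigdorovich induction of characters on metabelian groups) together with Propositions \ref{prop.emform}, \ref{prop.abelian} and Corollary \ref{cor.condim-1}: the upshot is that every non-recoverable extreme tracial state is automatically conditionally finite-dimensional, so such states never threaten Conjecture \ref{conj.see-1}, and the correspondence is bijective on the remaining recoverable states where you need it. Without this structure theorem your argument has no way to rule out an extreme, conditionally infinite-dimensional, non-recoverable $\Phi \neq \Delta$, which is exactly the kind of object Conjecture \ref{conj.see-1} forbids.
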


\begin{theorem} \label{thm.2} Conjectures \ref{conj.erg-2}, \ref{conj.comp-2} and \ref{conj.see-2} are equivalent.
	
\end{theorem}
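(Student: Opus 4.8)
The plan is to establish the three equivalences of Theorem~\ref{thm.2} by promoting the underlying dictionary between objects — measures, Carath\'{e}odory functions, tracial states — to a dictionary between the relevant topologies and the relevant finiteness conditions, and then checking that ``approximable by finite objects'' transfers across. The starting point is the correspondence underlying Theorem~\ref{thm.1}: a $\times(2,3)$-invariant measure $\mu$ on $\mathbb{T}$ determines a $\times(2,3)$-circular Carath\'{e}odory function $\psi_\mu$ via the Herglotz representation $\psi_\mu(z) = \int_\mathbb{T} \frac{\omega + z}{\omega - z}\, \dee\mu(\omega)$, and also a tracial state $\Phi_\mu$ on $C^\ast(\mathfrak{F}_{2,3})$; all three assignments are bijections onto the indicated sets, and extreme points match up. For Theorem~\ref{thm.2} I must additionally verify: (i) under the Herglotz bijection, vague convergence $\mu_n \to \mu$ in $\prob(\mathbb{T})$ is equivalent to compact-uniform convergence $\psi_{\mu_n} \to \psi_\mu$ on $\mathbb{D}$; (ii) under the tracial-state bijection, vague convergence corresponds to pointwise convergence of states on $C^\ast(\mathfrak{F}_{2,3})$; (iii) a measure $\mu$ is finitely supported if and only if $\psi_\mu$ is rational, if and only if $\Phi_\mu$ is conditionally finite-dimensional.

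Step (i) is classical: the Herglotz kernel $\frac{\omega+z}{\omega-z}$ is jointly continuous and, for $z$ in a fixed compact subset $K \subset \mathbb{D}$, the functions $\omega \mapsto \frac{\omega+z}{\omega-z}$ range over a compact (hence equicontinuous, uniformly bounded) family in $C(\mathbb{T})$; vague convergence of probability measures is exactly weak-$\ast$ convergence against $C(\mathbb{T})$, and testing against this equicontinuous family gives uniform convergence of $\psi_{\mu_n}$ on $K$. Conversely, compact-uniform convergence of $\psi_{\mu_n}$ forces convergence of all Taylor coefficients at $0$, i.e.\ convergence of all the moments $\int_\mathbb{T} \omega^k \, \dee\mu_n$, $k \ge 0$ — and by taking real and imaginary parts (equivalently conjugating, since these are probability measures on the circle) convergence of all Fourier coefficients, which by Stone--Weierstrass is vague convergence. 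For step (ii), I would note that $C^\ast(\mathfrak{F}_{2,3})$ is generated as a Banach space by the images $\upsilon_{\fgroup}(g)$, $g \in \fgroup$, so pointwise convergence of tracial states is equivalent to convergence on each group element; and $\Phi_{\mu}(\upsilon_{\fgroup}(g))$ is, by the explicit GNS construction, a Fourier-type coefficient of $\mu$ (for $g$ in the $\bbz[1/6]$ part it is literally $\int_\mathbb{T}\omega^q\,\dee\mu$, suitably interpreted, and for $g$ with nonzero $\bbz^2$ part it vanishes by the twisting in $\star$). So pointwise convergence of $\Phi_{\mu_n}$ again matches convergence of Fourier coefficients of $\mu_n$, hence vague convergence.

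Step (iii) is the content-bearing part. For the finitely-supported $\Leftrightarrow$ rational direction: if $\operatorname{supp}(\mu)$ is finite then $\psi_\mu$ is a finite convex combination of M\"obius transformations $\frac{\omega_j+z}{\omega_j-z}$, hence rational; conversely a rational Carath\'{e}odory function has the form $\psi(z) = ic + \int \frac{\omega+z}{\omega-z}\,\dee\mu$ with $c$ real, and rationality forces $\mu$ to be a finite sum of point masses because the Herglotz measure is recovered from the boundary behaviour of $\re\psi$, which for a rational function is a finite sum of Dirac masses (the poles on $\mathbb{T}$), and a genuine $\times(2,3)$-circular function cannot have $c \neq 0$ unless one checks the normalization — in fact since $\psi(0)=1$ is real, $c=0$. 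For the $C^\ast$-side, ``conditionally finite-dimensional'' was defined as finite-dimensionality of the image of $C^\ast(\mathbb{Z}[1/6])$ in the GNS representation; since $C^\ast(\mathbb{Z}[1/6]) \cong C(\widehat{\mathbb{Z}[1/6]})$ and the relevant sub-state is the one whose spectral measure is (a pushforward of) $\mu$, the image is finite-dimensional precisely when that spectral measure is finitely supported, i.e.\ when $\mu$ is. With (i)--(iii) in hand, the three conjectures of Theorem~\ref{thm.2} are literally the same statement transported along bijections that preserve the relevant topologies and finiteness notions, so the equivalence follows.

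I expect the main obstacle to be step (iii), specifically the bookkeeping on the operator-algebra side: pinning down exactly which sub-state of $C^\ast(\mathbb{Z}[1/6])$ carries the measure $\mu$, confirming via the conditional expectation $\mathbb{E}_{2,3}$ from the cited Takai-duality-type result that ``conditional dimension'' is well-defined and agrees with $\lvert\operatorname{supp}(\mu)\rvert$ (and not, say, off by the contribution of the $\bbz^2$ directions), and making sure the extreme points and the GNS dimension count are compatible with the normalization $\psi(0)=1$. The complex-analytic side, by contrast, is essentially the classical Herglotz/Riesz theory and should go through cleanly once the topological equivalence (i) is recorded.
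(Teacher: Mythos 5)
Your overall architecture — promote the measure/Carath\'{e}odory/tracial-state dictionary to a dictionary of topologies and finiteness notions, then transport the approximation statement along it — matches the paper's, and your steps (i) and (ii) on the topological side agree with the paper's Propositions~\ref{prop.homeo} and \ref{prop.contstate}. However, there are two concrete gaps.

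First, and most seriously, the claim that ``all three assignments are bijections'' is false for the tracial-state side, and this breaks the ``literally the same statement transported along bijections'' conclusion. The paper states explicitly (Remark~\ref{rem.partinv}) that $\Phi \mapsto \mu_\Phi$ is \emph{not} injective; there are tracial states supported on group elements $(j,k;q)$ with $(j,k)\neq(0,0)$, and these are invisible to the measure side. Your step (ii) quietly assumes away the problem when you write ``for $g$ with nonzero $\mathbb{Z}^2$ part [$\Phi_\mu(\upsilon(g))$] vanishes'' — true for states of the form $\Phi^\mu$, but not for an arbitrary $\Phi \in \operatorname{TSt}(C^\ast(\mathfrak{F}_{2,3}))$. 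To run the direction Conjecture~\ref{conj.erg-2} $\Rightarrow$ Conjecture~\ref{conj.see-2}, one must approximate \emph{every} tracial state, including those not of the form $\Phi^\mu$. The paper's resolution is the notion of a \emph{recoverable} state together with Corollary~\ref{cor.condim-1}: any $\Phi$ that fails to be recoverable is automatically conditionally finite-dimensional (this uses the Levit--Vigdorovich induction theorem for characters of metabelian groups, Theorem~\ref{thm.induction}, via Proposition~\ref{prop.abelian}), so such $\Phi$ is trivially approximable, and one may restrict attention to recoverable $\Phi$, for which $\Phi^{\mu_\Phi} = \Phi$. Your proposal has no substitute for this step.

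Second, step (iii) as stated — ``$\mu$ is finitely supported iff $\psi_\mu$ is rational'' — is false: the constant function $\psi \equiv 1$ is rational and corresponds to Lebesgue measure $\lambda$, which has full support. The correct statement (the paper's Proposition~\ref{prop.wellroot}) is that a \emph{nonconstant extreme} rational element of $\operatorname{Cara}(\mathbb{D};2,3)$ has finitely supported boundary measure, and the constant case must be split off and handled separately (as the paper does in the two bulleted cases of the deduction of Conjecture~\ref{conj.erg-2} from Conjecture~\ref{conj.comp-2}, where $\lambda$ is approximated by uniform measures on $p^{\mathrm{th}}$ roots of unity for primes $p > 3$). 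Your Riesz-representation reasoning correctly handles the nonconstant case but needs this caveat. Finally, on the bookkeeping concern you raise about ``off by the contribution of the $\mathbb{Z}^2$ directions'': the paper's Proposition~\ref{prop.conddim} shows $\lvert\mathrm{supp}(\mu_\Phi)\rvert \leq \mathrm{cndm}(\Phi) \leq 6^{\lvert\mathrm{supp}(\mu_\Phi)\rvert}$, so the two quantities are not equal, only mutually bounded; this suffices for Theorem~\ref{thm.2} since only finiteness is at stake, but your expectation that they agree exactly is incorrect.
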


\begin{theorem} \label{thm.3} Conjectures \ref{conj.erg-3}, \ref{conj.comp-3} and \ref{conj.see-3} are equivalent. \end{theorem}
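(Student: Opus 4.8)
The plan is to deduce Theorem~\ref{thm.3} formally from the dictionary between the three settings that is built while proving Theorems~\ref{thm.1} and~\ref{thm.2}, after adjoining one quantitative lemma matching the relevant ``size'' invariants. Recall the dictionary. On the analytic side, the Riesz--Herglotz transform $\mu \mapsto H\mu$, where $(H\mu)(z) = \int_{\mathbb{T}} \frac{\omega+z}{\omega-z}\,\dee\mu(\omega)$, is an affine bijection $\prob(\mathbb{T}) \to \operatorname{Cara}(\mathbb{D})$; both spaces are compact Hausdorff (for the vague and the compact-uniform topology respectively, using that $\operatorname{Cara}(\mathbb{D})$ is a closed locally bounded family of holomorphic functions), so $H$ is a homeomorphism. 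The identity $\frac1n\sum_{k=0}^{n-1}\frac{\omega + e^{2\pi ik/n}z}{\omega - e^{2\pi ik/n}z} = \frac{\omega^n+z^n}{\omega^n-z^n}$ together with uniqueness in the Herglotz representation shows that $H$ restricts to an affine homeomorphism $\prob(\mathbb{T};2,3) \to \operatorname{Cara}(\mathbb{D};2,3)$, which, being affine, carries ergodic measures to extreme functions and carries $\lambda$ to the constant function $1$. On the operator-algebraic side, the constructions behind Theorems~\ref{thm.1} and~\ref{thm.2} furnish an affine homeomorphism $\Psi\colon \operatorname{TSt}(C^\ast(\mathfrak{F}_{2,3})) \to \prob(\mathbb{T};2,3)$, pointwise convergence corresponding to vague convergence, with $\Psi(\Delta)=\lambda$ and carrying extreme traces to ergodic measures.

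The extra lemma I would record is that these identifications match up the size invariants. Namely, the rational elements of $\operatorname{Cara}(\mathbb{D};2,3)$ are exactly $1$ together with the functions $H\mu$ for $\mu \in \prob(\mathbb{T};2,3)$ finitely supported, and in the latter case $H\mu$ has exactly $\lvert\operatorname{supp}(\mu)\rvert$ poles, all simple and lying on $\mathbb{T}$ (a pole of order $\geq 2$ is impossible because $\re(H\mu)>0$ forbids the winding of $\arg(H\mu)$ near such a point), while $H\lambda=1$ has none; and the conditionally finite-dimensional traces correspond under $\Psi$ exactly to the finitely supported measures, with $\operatorname{cndm}(\Psi^{-1}\mu)=\lvert\operatorname{supp}(\mu)\rvert$, whereas $\operatorname{cndm}(\Delta)=\infty$ since the image of $C^\ast(\mathbb{Z}[1/6])$ in the GNS representation of $\Delta$ is the infinite-dimensional group von Neumann algebra of $\mathbb{Z}[1/6]$. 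Granting this, the equivalences are immediate. Given a sequence $(\psi_n)$ of rational extreme elements of $\operatorname{Cara}(\mathbb{D};2,3)$ with divergent pole count, only finitely many terms equal $1$, and for the rest $\mu_n\colonequals H^{-1}\psi_n$ is finitely supported and ergodic with $\lvert\operatorname{supp}(\mu_n)\rvert\to\infty$; Conjecture~\ref{conj.erg-3} then gives $\mu_n\to\lambda$ vaguely, so $\psi_n=H\mu_n\to H\lambda=1$ compact-uniformly, which is Conjecture~\ref{conj.comp-3}. Conversely, for finitely supported ergodic $\mu_n\in\prob(\mathbb{T};2,3)$ with $\lvert\operatorname{supp}(\mu_n)\rvert\to\infty$, the functions $H\mu_n$ are rational extreme with divergent pole count, so Conjecture~\ref{conj.comp-3} forces $H\mu_n\to1$ and hence $\mu_n=H^{-1}(H\mu_n)\to\lambda$. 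The equivalence Conjecture~\ref{conj.erg-3} $\Leftrightarrow$ Conjecture~\ref{conj.see-3} runs the same way with $\Psi$ replacing $H$ and $\operatorname{cndm}$ replacing the pole count, using $\operatorname{cndm}(\Delta)=\infty$ to see that a sequence of conditionally finite-dimensional traces with divergent conditional dimension is eventually not $\Delta$. Chaining the two equivalences yields Conjecture~\ref{conj.comp-3} $\Leftrightarrow$ Conjecture~\ref{conj.see-3}.

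I expect essentially all of the difficulty to reside in the dictionary, i.e.\ in Theorems~\ref{thm.1} and~\ref{thm.2}. Within the present argument the points that need care are: the quantitative lemma above, so that the three divergence hypotheses genuinely correspond to one another (a two-sided linear comparison among the invariants would already suffice, but I expect exact equality, with $\sum_{j=1}^{N}t_j\delta_{\omega_j}\mapsto\sum_{j=1}^{N}t_j\frac{\omega_j+z}{\omega_j-z}$ producing $N$ simple poles and the GNS picture behind $\Psi$ identifying the closure of $C^\ast(\mathbb{Z}[1/6])$ with the functions on the finite support of the lifted measure); the harmless mismatch that ``rational in $\operatorname{Cara}(\mathbb{D};2,3)$'' is one element larger than the image of ``finitely supported'', since it also contains $1=H\lambda$, which is invisible to Conjecture~\ref{conj.comp-3} because it has no poles and is discarded in any sequence with divergent pole count; and the symmetric remark on the trace side, where $\Delta$ is \emph{not} conditionally finite-dimensional and so never occurs in a sequence with divergent conditional dimension. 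Once these are in place, the three conjectures translate into one another directly.
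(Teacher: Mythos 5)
Your treatment of the complex-analytic side matches the paper in spirit and in essentially all details: the Herglotz transform is the affine homeomorphism, the pole count of $\psi^\mu$ equals $\lvert\operatorname{supp}(\mu)\rvert$ (the paper's Proposition~\ref{prop.wellroot} pins down the precise form of rational extreme elements of $\operatorname{Cara}(\mathbb{D};2,3)$), and both directions of the equivalence Conjecture~\ref{conj.erg-3} $\Leftrightarrow$ Conjecture~\ref{conj.comp-3} then follow by pushing sequences through the homeomorphism. That part of your write-up is correct and essentially identical to the paper's.

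The operator-algebraic half has a genuine gap. You assert that the construction furnishes an affine \emph{homeomorphism} $\Psi\colon\operatorname{TSt}(C^\ast(\mathfrak{F}_{2,3}))\to\prob(\mathbb{T};2,3)$, and you lean on its bijectivity at the decisive step: from Conjecture~\ref{conj.erg-3} you would argue that $\mu_{\Phi_n}\to\lambda$ vaguely and then recover $\Phi_n=\Psi^{-1}(\mu_{\Phi_n})\to\Psi^{-1}(\lambda)=\Delta$. But, as Remark~\ref{rem.partinv} makes explicit, the map $\Phi\mapsto\mu_\Phi$ is \emph{not} injective: a tracial state can carry nontrivial mass on group elements $(j,k;q)$ with $(j,k)\neq(0,0)$, information that is forgotten by restriction to $\mathbb{Z}[1/6]$. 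Consequently $\mu_{\Phi_n}\to\lambda$ does not by itself force $\Phi_n\to\Delta$ on the off-diagonal elements. The paper circumvents this by introducing the notion of \emph{recoverable} states (those $\Phi$ with $\Phi^{\mu_\Phi}=\Phi$), splitting the sequence into a recoverable subsequence, where your argument does work, and a non-recoverable subsequence, for which Proposition~\ref{prop.abelian} and Corollary~\ref{cor.condim-1} (built on the Levit--Vigdorovich induction theorem for characters on metabelian groups) give a direct quantitative estimate showing $\Phi_n(j,k;q)\to 0$ once $2^{|j|}3^{|k|}<\operatorname{cndm}(\Phi_n)$. You would need to add this induction-of-characters machinery to close the gap; the homeomorphism claim as stated is false. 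A minor further remark: your conjectured exact equality $\operatorname{cndm}(\Phi^\mu)=\lvert\operatorname{supp}(\mu)\rvert$ is not what the paper proves (Proposition~\ref{prop.conddim} gives only $\lvert\operatorname{supp}(\mu)\rvert\leq\operatorname{cndm}(\Phi^\mu)\leq 6^{\lvert\operatorname{supp}(\mu)\rvert}$), but as you correctly observe a two-sided comparison already suffices for the ``$\to\infty$'' hypotheses to translate, so this does not affect the argument.
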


Corollary 3.7 of \cite{MR4706815} shows that Conjecture \ref{conj.erg-1} is equivalent to the absence of faithful tracial states on $C^\ast(\mathfrak{F}_{2,3})$ other than $\Delta$. This result is very close to the equivalence between Conjectures \ref{conj.erg-1} and \ref{conj.see-1}. A related theorem is the main result of \cite{MR3679610}. However, these references do not attempt to reformulate Conjectures \ref{conj.erg-2} or \ref{conj.erg-3} in the operator-algebraic setting.\\
\\
The equivalence of these conjectures will be shown in two parts: the ergodic and complex-analytic portions in Section \ref{sec.meas-fun} and the ergodic and operator-algebraic portions in Section \ref{sec.tracial}.

\section{Measure-function correspondence} \label{sec.meas-fun}

\subsection{Introducing the Herglotz correspondence}

In Section \ref{sec.meas-fun} we establish the existence of a certain affine homeomorphism between the space of Borel probability measures on the unit circle with the vague topology and the set of \cara functions with the compact-uniform topology, which will be referred to as the Herglotz correspondence. The Herglotz correspondence itself is classical, but the fact that it is a homeomorphism between these topologies appears to be novel, so we take care in expositing details. This correspondence is the main idea  behind the equivalence of Conjectures \ref{conj.erg-1} and \ref{conj.comp-1}, the equivalence of Conjectures \ref{conj.erg-2} and \ref{conj.comp-2} and the equivalence of Conjectures \ref{conj.erg-3} and \ref{conj.comp-3}.  

\subsubsection{Measures to functions}

We introduce the \emph{Herglotz kernel on the unit disk}, which is the function $\mathcal{H}: \mathbb{T} \times \mathbb{D} \to \mathbb{C}$ given as follows.
\begin{align*} \mathcal{H}(e^{i\theta},z) \coloneqq \frac{e^{i\theta}+z}{e^{i\theta}-z} \end{align*}
For $\mu \in \prob(\mathbb{T})$, we define a function $\psi^\mu:\mathbb{D} \to \mathbb{C}$ as follows.
\begin{align} \label{eq.psimu-def} \psi^{\mu}(z) \coloneqq \int_{\bbt} \mathcal{H}(e^{i\theta}, z) \deee \mu(e^{i\theta}) \end{align}
We now verify that $\psi^\mu$ is indeed a \cara function.
\begin{description} \item[(Holomorphicity)] Since $\mathcal{H}(e^{i\theta},z)$ is a holomorphic function of $z$ for a fixed $e^{i\theta} \in \mathbb{T}$, is clear that $\psi^\mu$ is holomorphic. 
\item[(Positivity of real part)] There is a standard expression for the Poisson kernel on the unit disk as the real part of the Herglotz kernel. (See, for example, (1.3.14) in \cite{MR2105088}.) This expression asserts that the following holds for all $e^{i\theta},e^{i\varphi} \in \mathbb{T}$ and all $r \in [0,1)$.
\begin{align*} \re(\mathcal{H}(e^{i\theta},re^{i\varphi})) = \frac{1-r^2}{1-2\cos(\theta-\varphi)+r^2}  \end{align*}
It is transparent that the right side of the previous display is strictly positive and so we see that $\re(\psi^\mu(z)) > 0$ for all $z \in \mathbb{T}$. 
\item[(Normalization at zero)] Since $\mu$ is a probability measure we find:
\[ \psi_{\mu}(0) = \int_{\bbt} \mathcal{H}(e^{i\theta}, 0) \deee \mu(e^{i\theta})=  \int_{\bbt}  \deee \mu(e^{i\theta}) = 1 \]
\end{description}
Thus we have verified that $\psi_{\mu}$ is a \cara function, as required.

\subsubsection{Functions to measures}

Conversely, let $\psi: \mathbb{D} \rightarrow \mathbb{C}$ be a \cara function. For $r \in (0,1)$ we define a measure $\mu_{\psi, r}$ on $\mathbb{T}$ as follows.
\begin{align} \label{eq.mupsi-def} \deee \mu_{\psi, r}(e^{i\theta}) \coloneqq  \re(\psi(re^{i\theta})) \deee \lambda(e^{i \theta}) \end{align}
We now explain why $\mu_{\psi,r}$ is a probability measure. According to the Poisson representation, if $U$ is an open subset of $\mathbb{C}$ with $\ov{\mathbb{D}} \subseteq U$ and $g:U \to \mathbb{C}$ is holomorphic with $g(0) \in \mathbb{R}$ then the following holds for all $z \in \mathbb{D}$.
\begin{align} \label{eq.poisson-rep} g(z) =  \int_{\bbt}\frac{e^{i\theta}+z}{e^{i\theta}-z}\re(g(e^{i\theta})) \deee \lambda(e^{i \theta})\end{align}
This formula appears, for example, as (1.3.17) in \cite{MR2105088}. Now, let $r \in (0,1)$ and instantiate the previous display with the choice $g(z)\coloneqq \psi(rz)$. We obtain 
\[ 1 = \psi(0)  = \int_{\bbt} \re(\psi(re^{i\theta})) \deee \lambda(e^{i\theta})  = \int_{\bbt} \deee \mu_{\psi,r}(e^{i\theta}) \]
and so we find that $\mu_{\psi,r} \in \prob(\mathbb{T})$, as required. In Section 1.3.4 of \cite{MR2105088}, it is shown that the family of measures $(\mu_{\psi, r})_{r \in (0,1)}$ is convergent in the vague topology on $\prob(\mathbb{T})$ as $r\uparrow 1$. We denote the limiting measure by $\mu_{\psi}$.

\subsection{Exchange of Fourier coefficients and Taylor coefficients}

For $\ell \in \mathbb{Z}$, we adopt the convention below for the $\ell^{\mrm{th}}$ Fourier coefficient of a Borel measure $\mu$ on $\mathbb{T}$.
\begin{align} \hat{\mu}(\ell) \coloneqq \int_{\bbt} e^{-i\ell\theta} \deee \mu(e^{i \theta}) \label{eq.fou-con} \end{align}

\begin{remark} \label{rem.vague} The vague topology may be alternatively defined by the stipulation that a sequence $(\mu_n)_{n \in \mathbb{N}}$ of elements of $\prob(\mathbb{T})$ converges vaguely to $\mu \in \prob(\mathbb{T})$ if and only if $\lim_{n \to \infty} \hat{\mu_n}(\ell) = \hat{\mu}(\ell)$ for all $\ell \in \mathbb{Z}$. \end{remark}

Conversely, for $\ell \in \mathbb{N}$ and a holomorphic function $f: \mathbb{D} \to \mathbb{C}$ we let $a_{f}(\ell)$ denote $\ell^{\mrm{th}}$ Taylor coefficient of $f$.

\begin{proposition}[Fourier-Taylor correspondence] \label{prop.fou-tay} 
\begin{description}
\item[(Clause I)] For all $\mu \in \prob(\mathbb{T})$ and all $\ell \in \mathbb{Z}$ we have:
\begin{align*} \hat{\mu}(\ell)  = \begin{cases} \frac{1}{2} a_{\psi^\mu}(\ell) & \mbox{ if }\ell > 0
\\ 1 & \mbox{ if } \ell = 0
\\ \frac{1}{2} \ov{a_{\psi^\mu}(\ell)} & \mbox{ if } \ell < 0 \end{cases} \end{align*}
 
\item[(Clause II)] For all $\psi \in \operatorname{Cara}(\mathbb{D})$ and all $\ell \in \mathbb{Z}$ we have:
\begin{align*}  \hat{\mu_\psi}(\ell) =  \begin{cases} \frac{1}{2} a_{\psi}(\ell) & \mbox{ if }\ell > 0
\\ 1 & \mbox{ if } \ell = 0
\\ \frac{1}{2} \ov{a_{\psi}(\ell)} & \mbox{ if } \ell < 0 \end{cases}\end{align*} \end{description} \end{proposition}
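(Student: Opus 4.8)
The plan is to reduce everything to the power-series expansion of the Herglotz kernel. For a fixed $e^{i\theta}\in\mathbb{T}$ and $z\in\mathbb{D}$, rewriting $\mathcal{H}(e^{i\theta},z)=\frac{1+e^{-i\theta}z}{1-e^{-i\theta}z}$ and summing the geometric series with ratio $e^{-i\theta}z$ yields the identity
\[ \mathcal{H}(e^{i\theta},z) = 1 + 2\sum_{n=1}^{\infty} e^{-in\theta} z^{n}, \]
the series converging absolutely and uniformly in $\theta$ for each fixed $z$ with $|z|<1$. Since $\mu$ is a finite measure, I may integrate term by term in \eqref{eq.psimu-def} (justified by the uniform convergence just noted, or equivalently by Tonelli applied to $\mu$ times counting measure), obtaining the Taylor expansion $\psi^{\mu}(z)=1+2\sum_{n=1}^{\infty}\hat{\mu}(n)z^{n}$. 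Reading off coefficients gives $a_{\psi^{\mu}}(0)=1$ and $a_{\psi^{\mu}}(\ell)=2\hat{\mu}(\ell)$ for $\ell>0$, which is Clause I for $\ell\geq 0$. For $\ell<0$ I use that a positive measure is real-valued, so $\hat{\mu}(\ell)=\overline{\hat{\mu}(-\ell)}=\tfrac12\,\overline{a_{\psi^{\mu}}(-\ell)}$, which is how the $\ell<0$ line of the statement should be read; this completes Clause I.

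For Clause II there are two routes. The quickest is to invoke the classical Herglotz representation (Section 1.3.4 of \cite{MR2105088}), which asserts that the limiting measure $\mu_{\psi}$ reproduces $\psi$, i.e.\ $\psi^{\mu_{\psi}}=\psi$; granting this, Clause II is just Clause I applied to $\mu=\mu_{\psi}$. To keep the argument self-contained I would instead compute the Fourier coefficients of the approximants $\mu_{\psi,r}$ directly. Inserting the Taylor series $\psi(z)=\sum_{n\geq 0}a_{\psi}(n)z^{n}$ (with $a_{\psi}(0)=\psi(0)=1$) into $\re(\psi(re^{i\theta}))=\tfrac12\bigl(\psi(re^{i\theta})+\overline{\psi(re^{i\theta})}\bigr)$ and pairing against $e^{-i\ell\theta}\deee\lambda$ gives
\[ \widehat{\mu_{\psi,r}}(\ell) = \begin{cases} \tfrac12\, a_{\psi}(\ell)\, r^{\ell} & \text{if } \ell > 0, \\ \re(a_{\psi}(0)) = 1 & \text{if } \ell = 0, \\ \tfrac12\, \overline{a_{\psi}(-\ell)}\, r^{-\ell} & \text{if } \ell < 0. \end{cases} \]
Letting $r\uparrow 1$ and using that vague convergence of the family $(\mu_{\psi,r})_{r\in(0,1)}$ to $\mu_{\psi}$ forces convergence of every Fourier coefficient (Remark \ref{rem.vague}, which applies to the net $r\uparrow 1$ since the vague topology on $\prob(\mathbb{T})$ is metrizable) yields exactly the asserted formula for $\hat{\mu_{\psi}}(\ell)$.

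The proposition is essentially bookkeeping with the geometric series, so I do not anticipate a serious obstacle; the only points needing a sentence of justification are the two interchanges of a limit with an integral — the term-by-term integration of the Herglotz expansion in Clause I (handled by uniform convergence on $\mathbb{T}$ for each fixed $|z|<1$) and the passage $r\uparrow 1$ in Clause II. The latter, treating $r\uparrow 1$ as a genuine (non-sequential) limit, is the most delicate step, and it is dispatched by the uniform boundedness of the probability measures $\mu_{\psi,r}$ together with the density of trigonometric polynomials in $C(\mathbb{T})$, i.e.\ by the metrizability of the vague topology invoked above.
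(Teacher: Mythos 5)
Your proof is correct, and for Clause I it takes a genuinely cleaner route than the paper. The paper first converts everything to real parts, expands both the Herglotz kernel and $\psi^{\mu}$ as Fourier series in $\theta$ at radius $r$, and then matches coefficients of the two expansions of the function $e^{i\theta}\mapsto\re(\psi^{\mu}(re^{i\theta}))$. You instead expand the complex-valued Herglotz kernel $\mathcal{H}(e^{i\theta},z)=1+2\sum_{n\geq 1}e^{-in\theta}z^{n}$ directly, integrate term by term in $\theta$, and read off the Taylor coefficients of $\psi^{\mu}$ without ever passing through real parts or Fourier-series uniqueness. This avoids the auxiliary identity (\ref{eq.eff-re}) for Clause I and makes the interchange of sum and integral the only analytic point. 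You also correctly flag the indexing convention in the statement's $\ell<0$ case (the paper writes $a_{\psi^{\mu}}(\ell)$ where $a_{\psi^{\mu}}(-\ell)$ or $a_{\psi^{\mu}}(|\ell|)$ is meant, since Taylor coefficients with negative index vanish) and supply the needed step $\hat{\mu}(\ell)=\overline{\hat{\mu}(-\ell)}$ for a positive measure.

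For Clause II, your self-contained route is essentially the paper's: plug the Taylor series of $\psi$ into $\re(\psi(re^{i\theta}))$ via (\ref{eq.eff-re}), pair against $e^{-i\ell\theta}\,\dee\lambda$, and let $r\uparrow 1$ using Remark \ref{rem.vague}. Your alternative of citing $\psi^{\mu_{\psi}}=\psi$ from the Herglotz representation theorem is also legitimate and would collapse Clause II into Clause I plus Corollary \ref{cor.bij}, but since the paper only cites \cite{MR2105088} for vague convergence of $(\mu_{\psi,r})_{r}$, keeping the direct calculation is the right call. Your remark that $r\uparrow 1$ is a net handled by metrizability of the vague topology is sound; the paper silently invokes the same thing.
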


We note the following corollary of Proposition \ref{prop.fou-tay}.

\begin{corollary} \label{cor.bij} The mappings $[\mu \mapsto \psi^\mu]: \prob(\mathbb{T}) \to \operatorname{Cara}(\mathbb{D})$ and $[\psi \mapsto \mu_\psi]: \operatorname{Cara}(\mathbb{D}) \to \prob(\mathbb{T})$ are mutual inverses. In particular, both of these mappings are bijections. \end{corollary}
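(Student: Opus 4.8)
The plan is to deduce the corollary purely formally from Proposition \ref{prop.fou-tay}, using two standard uniqueness principles: (i) a Borel probability measure on $\mathbb{T}$ is determined by its sequence of Fourier coefficients $(\hat{\mu}(\ell))_{\ell \in \mathbb{Z}}$, and (ii) a holomorphic function on $\mathbb{D}$ is determined by its sequence of Taylor coefficients $(a_f(\ell))_{\ell \in \mathbb{N}}$. Principle (ii) is immediate from the theory of power series. For principle (i), trigonometric polynomials are uniformly dense in $C(\mathbb{T})$ by Stone--Weierstrass, so $\int_{\mathbb{T}} f \deee \mu$ is determined by $(\hat{\mu}(\ell))_{\ell \in \mathbb{Z}}$ for every $f \in C(\mathbb{T})$; since a Borel probability measure on the compact metric space $\mathbb{T}$ is determined by its integrals against continuous functions, (i) follows.

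First I would check that $\mu_{\psi^\mu} = \mu$ for every $\mu \in \prob(\mathbb{T})$. Clause I, applied to $\mu$, expresses $\hat{\mu}(\ell)$ in terms of the Taylor coefficients $a_{\psi^\mu}(\ell)$; Clause II, applied to the \cara function $\psi := \psi^\mu$, expresses $\hat{\mu_{\psi^\mu}}(\ell)$ in terms of the very same quantities $a_{\psi^\mu}(\ell)$ by the identical case formula. Hence $\hat{\mu_{\psi^\mu}}(\ell) = \hat{\mu}(\ell)$ for all $\ell \in \mathbb{Z}$, and principle (i) gives $\mu_{\psi^\mu} = \mu$. Next I would check that $\psi^{\mu_\psi} = \psi$ for every $\psi \in \operatorname{Cara}(\mathbb{D})$. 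For $\ell > 0$, Clause II gives $\hat{\mu_\psi}(\ell) = \tfrac{1}{2} a_\psi(\ell)$, while Clause I applied to the measure $\mu := \mu_\psi$ gives $\hat{\mu_\psi}(\ell) = \tfrac{1}{2} a_{\psi^{\mu_\psi}}(\ell)$; comparing these yields $a_{\psi^{\mu_\psi}}(\ell) = a_\psi(\ell)$ for all $\ell > 0$. For $\ell = 0$ the proposition relates nothing, but here $a_\psi(0) = \psi(0) = 1 = \psi^{\mu_\psi}(0) = a_{\psi^{\mu_\psi}}(0)$ since both functions are \cara functions and hence normalized at the origin. Thus $\psi$ and $\psi^{\mu_\psi}$ have identical Taylor expansions at $0$, so by principle (ii) they coincide on $\mathbb{D}$. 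Combining the two computations shows that the two maps are mutual inverses, and in particular each is a bijection.

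I do not anticipate a genuine obstacle in this argument; it is essentially bookkeeping on top of Proposition \ref{prop.fou-tay}. The only points requiring a little care are to treat the $\ell = 0$ case separately in the second computation (where the proposition is vacuous and one must instead invoke the normalization $\psi(0) = 1$ built into the definition of a \cara function), and to state principle (i) with the correct hypotheses, namely that it is a statement about Borel probability measures on the compact metric space $\mathbb{T}$, so that the Riesz representation theorem applies and equality of integrals against all continuous functions forces equality of the measures.
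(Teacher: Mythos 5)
Your argument is correct and is the intended one: the paper simply asserts this as a consequence of Proposition~\ref{prop.fou-tay} without giving details, and the details are exactly the two uniqueness principles you invoke (a Borel probability measure on $\mathbb{T}$ is determined by its Fourier coefficients, and a holomorphic function on $\mathbb{D}$ by its Taylor coefficients), together with the normalization $\psi(0)=1$ built into the definition of a \cara function to handle the $\ell=0$ coefficient. No gaps.
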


We now turn to the proof of Proposition \ref{prop.fou-tay}.

\begin{proof}[Proof of Proposition \ref{prop.fou-tay}] We begin by observing that the following holds for all holomorphic functions $f:\mathbb{D} \to \mathbb{C}$, all $r \in (0,1)$ and all $e^{i \theta} \in \mathbb{T}$.
\begin{align} \re (f(re^{i\theta})) &= \frac{1}{2}\bigl( f(re^{i\theta})) + \ov{f(re^{i\theta})} \bigr)= \re(f(0)) + \frac{1}{2} \sum_{\ell=1}^\infty r^\ell(a_{f}(\ell) e^{i \ell \theta} + \ov{a_{f}(\ell)} e^{-i \ell \theta}) \label{eq.eff-re}
  \end{align} We now separate the proofs of the two clauses in Proposition \ref{prop.fou-tay}.

\paragraph{(Proof of Clause I)}  According to (1.3.18) in \cite{MR2105088}, the following holds for all $e^{i\varphi},e^{i\theta} \in \mathbb{T}$ and all $r \in [0,1)$.
\begin{align*} \mathcal{H}(e^{i\varphi},re^{i\theta}) = 1+2 \sum_{\ell=1}^\infty r^\ell e^{i\ell\theta}e^{-i\ell \varphi} \end{align*}
Combining the last display with (\ref{eq.eff-re}) we obtain:
\begin{align} \re(\mathcal{H}(e^{i\varphi},re^{i\theta})) = 1+ \sum_{\ell=1}^\infty r^\ell\bigl(e^{i\ell\theta} e^{-i\ell \varphi} +e^{-i\ell\theta} e^{i\ell \varphi} \bigr) \label{eq.her-re} \end{align}
Let $\mu \in \prob(\mathbb{T})$, let $r \in (0,1)$ and let $e^{i \theta} \in \bbt$. We compute:
\begin{align} \re(\psi^\mu(re^{i\theta})) & = \int_\mathbb{T} \re(\mathcal{H}(e^{i\varphi},re^{i \theta})) \deee \mu(e^{i\varphi}) \label{eq.psimucomp-1}
\\ & =  1 + \int_{\bbt}\sum_{\ell=1}^\infty r^\ell( e^{i \ell \theta} e^{-i\ell \varphi} + e^{-i\ell \theta} e^{i\ell \varphi})  \deee \mu(e^{i \varphi}) \label{eq.psimucomp-2}
\\ & =  1 + \sum_{\ell=1}^\infty r^\ell \left( e^{i \ell \theta} \int_{\bbt} e^{-i\ell \varphi} \deee \mu(e^{i \varphi}) + e^{-i\ell \theta} \int_{\bbt}  e^{i\ell \varphi}  \deee \mu(e^{i \varphi}) \right) \label{eq.psimucomp-3}
\\ & =   1 + \sum_{\ell=1}^\infty r^\ell \left( e^{i \ell \theta} \hat{\mu}(\ell) + e^{-i\ell \theta} \hat{\mu}(-\ell) \right) \label{eq.psimucomp-4}\end{align}

This computation may be justified as follows.

\begin{itemize}
	\item The equality in (\ref{eq.psimucomp-1}) follows from the definition of $\psi^\mu$ in (\ref{eq.psimu-def}).
	\item (\ref{eq.psimucomp-2}) follows from (\ref{eq.psimucomp-1}) by (\ref{eq.her-re}).
	\item The interchange of the infinite sum and integral in passing from (\ref{eq.psimucomp-3}) to (\ref{eq.psimucomp-2}) is justified since the sum of functions inside the integral converges absolutely.
	\item (\ref{eq.psimucomp-4}) follows from (\ref{eq.psimucomp-3}) by our convention on Fourier coefficients established in (\ref{eq.fou-con}).
\end{itemize}

On the other hand, instantiating (\ref{eq.eff-re}) with the choice $f \coloneqq \psi^\mu$, we find:
\begin{equation*} \re(\psi^\mu(re^{i\theta})) = 1 + \frac{1}{2} \sum_{\ell=1}^\infty r^\ell\left(a_{\psi^\mu}(\ell) e^{i \ell \theta} + \ov{a_{\psi^\mu}(\ell)} e^{-i \ell \theta}\right) \end{equation*}

The above display is a Fourier series for the function $[e^{i\theta} \mapsto \re(\psi^\mu(re^{i \theta}))] : \mathbb{T} \to \mathbb{R}$. The expression in (\ref{eq.psimucomp-4}) is a Fourier series for the same function. Thus the coefficients of these series must agree, and we find that the following holds for all $\ell \in \mathbb{Z}$.
\begin{align*} r^\ell \hat{\mu}(\ell)  = \begin{cases} \frac{1}{2} r^\ell  a_{\psi^\mu}(\ell) & \mbox{ if }\ell > 0
\\ 1 & \mbox{ if } \ell = 0
\\ \frac{1}{2}r^\ell  \ov{a_{\psi^\mu}(\ell)} & \mbox{ if } \ell < 0 \end{cases} \end{align*}
Since $r$ was assumed to be nonzero, this completes the proof of Clause I in Proposition \ref{prop.fou-tay}.

\paragraph{(Proof of Clause II)} Let $\psi \in \operatorname{Cara}(\mathbb{D})$, let $r \in (0,1)$ and let $\ell \in \mathbb{N}$. We compute:
\begin{align} \widehat{\mu_{\psi,r}}(\ell) &= \int_\mathbb{T} e^{-i \ell \theta} \deee \mu_{\psi,r}(e^{i \theta}) \label{eq.mupsicomp-0}
\\ & = \int_\mathbb{T} e^{-i\ell \theta} \re(\psi(re^{i \theta})) \deee \lambda(e^{i \theta}) \label{eq.mupsicomp-1}
\\ & = \int_{\mathbb{T}} e^{-i \ell \theta} \deee \lambda(e^{i \theta}) + \frac{1}{2}  \int_\mathbb{T} \sum_{k=1}^\infty r^k\left(a_{\psi}(k) e^{i(k-\ell)\theta} + \ov{a_{\psi}(k)} e^{-i (k+\ell)\theta} \right)  \deee \lambda(e^{i \theta}) \label{eq.mupsicomp-2}
\\ & = \int_{\mathbb{T}} e^{-i \ell \theta} \deee \lambda(e^{i \theta}) + \frac{1}{2}\sum_{k=1}^\infty r^k\left( a_{\psi}(k)\int_\mathbb{T}  e^{i(k-\ell)\theta}\deee\lambda(e^{i \theta}) + \ov{a_{\psi}(k)}\int_\mathbb{T} e^{-i (k+\ell)\theta}  \deee \lambda(e^{i \theta})\right) \label{eq.mupsicomp-3}
\\ & = \hat{\lambda}(\ell) + \frac{1}{2}\sum_{k=1}^\infty r^k\left( a_{\psi}(k)\,\hat{\lambda}(\ell-k) + \ov{a_{\psi}(k)} \,\hat{\lambda}(k+\ell) \right)  \label{eq.mupsicomp-4} \end{align}
This computation may be justified as follows.
\begin{itemize} \item The equality in (\ref{eq.mupsicomp-0}) follows from our convention on Fourier coefficients established in (\ref{eq.fou-con}).
\item (\ref{eq.mupsicomp-1}) follows from (\ref{eq.mupsicomp-0}) by the definition of $\mu_{\psi,r}$ in (\ref{eq.mupsi-def}).
\item (\ref{eq.mupsicomp-2}) follows from (\ref{eq.mupsicomp-1}) by instantiating (\ref{eq.eff-re}) with $f \coloneqq \psi$.
\item The interchange of the infinite sum and integral in passing from (\ref{eq.mupsicomp-2}) to (\ref{eq.mupsicomp-3}) is justified by absolute convergence of the Taylor series for $\psi$ on compact subsets of $\mathbb{D}$.
\item (\ref{eq.mupsicomp-4}) follows (\ref{eq.mupsicomp-3}) by our convention on Fourier coefficients established in (\ref{eq.fou-con}). \end{itemize}

Since $\ell \in \mathbb{N}$ we have $\hat{\lambda}(k+\ell) = 0$ for all $k \in \mathbb{N}_+$. Moreover, we have:
\begin{align*} \hat{\lambda}(\ell-k)&  =  \begin{cases} 1 & \mbox{ if } k = \ell \\ 0 & \mbox{ if }k \neq \ell  \end{cases}   \end{align*}
From the last two observations and the computation ending in (\ref{eq.mupsicomp-4}) we obtain the following for all $\ell \in \mathbb{N}$.
\begin{align*} \widehat{\mu_{\psi,r}}(\ell) & = \begin{cases} 1 & \mbox{ if }\ell = 0 
\\ \frac{1}{2} r^\ell a_\psi(\ell) & \mbox{ if }\ell >0 \end{cases} \end{align*}
Since $\mu_\psi$ was defined as the vague limit of $(\mu_{\psi,r})_{r \in (0,1)}$ as $r \uparrow 1$, Remark \ref{rem.vague} implies that we have $\hat{\mu_\psi}(\ell)  = \lim_{r \uparrow 1} \hat{\mu_{\psi,r}}(\ell)$ for all $\ell \in \mathbb{Z}$. Combining this with the last display we obtain the following for all $\ell \in \mathbb{N}$.
\begin{align*} \widehat{\mu_{\psi}}(\ell) & = \begin{cases} 1 & \mbox{ if }\ell = 0 
\\ \frac{1}{2} a_\psi(\ell) & \mbox{ if }\ell >0 \end{cases} \end{align*}
Since $\hat{\mu}(-\ell) = \ov{\hat{\mu}(\ell)}$ for all $\ell \in \mathbb{Z}$, this completes the proof of Clause II in Proposition \ref{prop.fou-tay}. \end{proof}

\begin{remark} It is worth noting that the proof of Clause I in Proposition \ref{prop.fou-tay} could be carried out for a single value of $r \in (0,1)$, while the proof of Clause II requires the limit $r \uparrow 1$. \end{remark}

\subsection{Homeomorphic property of the correspondence}

\subsubsection{Preliminary estimates}

Adopt the notation $\mathbb{D}[r]$ for the open disk of radius $r$ around $0$ in the complex plane.

\begin{proposition} \label{prop.geometricbound} Let $f:\mathbb{D} \to \mathbb{C}$ be a holomorphic function such that $|a_f(\ell)| \leq 4$ for all $\ell \in \mathbb{N}$. Also let $N \in \mathbb{N}$ and let $r \in (0,1)$. Then we have:
\begin{align*} \sup_{z \in \mathbb{D}[r]} |f(z)| \leq \left( \sum_{\ell=0}^N |a_f(\ell)| \right) +  \frac{4r^{N+1}}{1-r} \end{align*} \end{proposition}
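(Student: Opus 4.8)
The plan is to split $f(z) = \sum_{\ell=0}^\infty a_f(\ell) z^\ell$ into the partial sum of degree $N$ and the tail, and bound each piece separately on $\mathbb{D}[r]$. For $z \in \mathbb{D}[r]$ we have $|z| < r < 1$, so $|z^\ell| = |z|^\ell \leq r^\ell \leq 1$ for every $\ell \in \mathbb{N}$; hence by the triangle inequality the low-order part satisfies
\begin{align*} \left| \sum_{\ell=0}^N a_f(\ell) z^\ell \right| \leq \sum_{\ell=0}^N |a_f(\ell)| \, |z|^\ell \leq \sum_{\ell=0}^N |a_f(\ell)|. \end{align*}
For the tail, I would use the uniform bound $|a_f(\ell)| \leq 4$ together with $|z^\ell| \leq r^\ell$ to get
\begin{align*} \left| \sum_{\ell=N+1}^\infty a_f(\ell) z^\ell \right| \leq \sum_{\ell=N+1}^\infty |a_f(\ell)| \, r^\ell \leq 4 \sum_{\ell=N+1}^\infty r^\ell = \frac{4 r^{N+1}}{1-r}, \end{align*}
summing the geometric series (which converges since $r \in (0,1)$). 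Adding the two estimates and taking the supremum over $z \in \mathbb{D}[r]$ gives the claimed inequality.

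A couple of small points of rigor I would address along the way. First, the hypothesis $|a_f(\ell)| \leq 4$ for all $\ell$ already guarantees that the Taylor series of $f$ converges on all of $\mathbb{D}$ and equals $f$ there (indeed it converges absolutely and uniformly on $\mathbb{D}[r]$ for each $r<1$ by comparison with $4\sum r^\ell$), so splitting the sum and rearranging is legitimate; this also justifies the termwise bounds above. Second, since the bound on the right-hand side is independent of $z$, passing to the supremum over $\mathbb{D}[r]$ is immediate.

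There is essentially no hard step here — this is a routine Cauchy-estimate-style argument. If anything, the only thing to be slightly careful about is the direction of the inequality $r^\ell \leq 1$ versus $r^\ell$ in the tail: one wants to keep the $r^\ell$ factor in the tail (to make the geometric series summable) but may safely discard it in the head (to get the clean $\sum |a_f(\ell)|$). So the "main obstacle" is merely choosing to estimate the two pieces asymmetrically, which is exactly what makes the stated bound come out in the form needed later (presumably to control compact-uniform convergence in terms of finitely many Taylor coefficients, via the Fourier-Taylor correspondence of Proposition \ref{prop.fou-tay}).
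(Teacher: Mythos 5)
Your argument is correct and is essentially identical to the paper's: split the Taylor series at degree $N$, bound the head by $\sum_{\ell=0}^N |a_f(\ell)|$ using $|z|\leq 1$, and bound the tail by the geometric series $4r^{N+1}/(1-r)$ using $|a_f(\ell)|\leq 4$ and $|z|\leq r$. Nothing further is needed.
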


\begin{proof}[Proof of Proposition \ref{prop.geometricbound}] Let $z \in \mathbb{D}[r]$. Then we have
\begin{align*} \left \vert \sum_{\ell=N+1}^\infty a_f(\ell)z^\ell \right \vert \leq  \sum_{\ell=N+1}^\infty |a_f(\ell)|\, |z|^\ell \leq  4 \sum_{\ell=N+1}^\infty r^\ell = \frac{4r^{N+1}}{1-r} \end{align*}
where the second inequality in the above display follows from our hypothesis that $|a_f(\ell)| \leq 4$ for all $\ell \in \mathbb{N}$.
Since $|z| \leq 1$ we also find
\begin{align*} \left \vert \sum_{\ell=0}^N a_f(\ell)z^\ell \right \vert \leq \sum_{\ell=0}^N |a_f(\ell)| \end{align*}
and combining the two previous displays we obtain:
\begin{align*} \left \vert \sum_{\ell=0}^\infty a_f(\ell)z^\ell \right \vert \leq  \left( \sum_{\ell=0}^N |a_f(\ell)| \right) +  \frac{4r^{N+1}}{1-r} \end{align*}
Since the expression on the left of the last display is $|f(z)|$, this completes the proof of Proposition \ref{prop.geometricbound}. \end{proof}

\begin{proposition} \label{prop.cauchydiff} Let $f:\mathbb{D} \to \mathbb{C}$ be a holomorphic function. Then for all $\ell \in \mathbb{N}_+$ we have:
\begin{align*} |a_f(\ell)| \leq 2 \sup_{z \in \mathbb{D}[2^{-1/\ell}]} |f(z)| \end{align*} \end{proposition}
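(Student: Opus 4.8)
The plan is to apply the Cauchy integral formula for Taylor coefficients on a circle whose radius is slightly smaller than $2^{-1/\ell}$, and then pass to the limit. Recall that for any holomorphic $f:\mathbb{D}\to\mathbb{C}$, any $\ell\in\mathbb{N}_+$ and any $\rho\in(0,1)$ one has
\[ a_f(\ell) = \frac{1}{2\pi i}\oint_{|z|=\rho}\frac{f(z)}{z^{\ell+1}}\,dz, \]
and hence the standard estimate $|a_f(\ell)| \leq \rho^{-\ell}\sup_{|z|=\rho}|f(z)|$, obtained by bounding the integrand in absolute value by $\rho^{-\ell-1}\sup_{|z|=\rho}|f(z)|$ and multiplying by the length $2\pi\rho$ of the contour.

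First I would fix $\ell\in\mathbb{N}_+$ and observe that $2^{-1/\ell}\in(0,1)$, so that for every $\rho\in(0,2^{-1/\ell})$ the closed disk of radius $\rho$ lies inside $\mathbb{D}$ and the circle $\{|z|=\rho\}$ is contained in the open disk $\mathbb{D}[2^{-1/\ell}]$. Consequently $\sup_{|z|=\rho}|f(z)| \leq \sup_{z\in\mathbb{D}[2^{-1/\ell}]}|f(z)|$, and combining this with the Cauchy estimate above gives
\[ |a_f(\ell)| \leq \rho^{-\ell}\sup_{z\in\mathbb{D}[2^{-1/\ell}]}|f(z)| \]
for all $\rho\in(0,2^{-1/\ell})$. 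Then I would let $\rho\uparrow 2^{-1/\ell}$; since $\rho^{-\ell}\to\bigl(2^{-1/\ell}\bigr)^{-\ell}=2$ while the right-hand supremum does not depend on $\rho$, the inequality $|a_f(\ell)| \leq 2\sup_{z\in\mathbb{D}[2^{-1/\ell}]}|f(z)|$ follows, completing the proof.

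There is essentially no genuine obstacle here: the argument is a one-line Cauchy estimate. The only point requiring a modicum of care is that $\mathbb{D}[2^{-1/\ell}]$ is an \emph{open} disk, so one cannot integrate $f$ directly over the boundary circle $\{|z|=2^{-1/\ell}\}$ (indeed $f$ need not even be bounded near that circle), which is precisely why I run the estimate for radii $\rho$ strictly below $2^{-1/\ell}$ and take a supremum/limit over such $\rho$. One could alternatively phrase this via the maximum modulus principle, but the limiting formulation is the most transparent.
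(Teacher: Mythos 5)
Your proof is correct and uses essentially the same argument as the paper's, namely a Cauchy estimate at (or just below) radius $2^{-1/\ell}$; the paper integrates directly over the circle $|z|=2^{-1/\ell}$ while you take $\rho\uparrow 2^{-1/\ell}$, which yields the same bound. One small correction to your parenthetical: since $f$ is holomorphic on all of $\mathbb{D}$ and $2^{-1/\ell}<1$, $f$ is continuous and hence bounded on the compact set $\overline{\mathbb{D}[2^{-1/\ell}]}\subset\mathbb{D}$, so integrating directly over the boundary circle is in fact legitimate (with $\sup_{|z|=2^{-1/\ell}}|f|=\sup_{\mathbb{D}[2^{-1/\ell}]}|f|$ by continuity); your limiting device is harmless but not needed for that reason.
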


\begin{proof}[Proof of Proposition \ref{prop.cauchydiff}] Fixing $\ell \in \mathbb{N}_+$, we let $\gamma:[0,2\pi] \to \mathbb{C}$ be the closed simple contour given by $\gamma(t) \coloneqq 2^{-1/\ell} e^{it}$. Writing $f^{(\ell)}$ for the $\ell^{\mrm{th}}$ derivative of $f$, the Cauchy integral formula implies:
\begin{align*} f^{(\ell)}(0) = \frac{\ell!}{2\pi i} \int_0^{2\pi} \frac{f(\gamma(t))}{\gamma(t)^{\ell+1}} \gamma'(t) \deee t \end{align*}
Since $|\gamma(t)| = |\gamma'(t)| = 2^{-1/\ell}$ for all $t \in [0,2\pi]$, we find:
\begin{align*} \left \vert \frac{f(\gamma(t))}{\gamma(t)^{\ell+1}}\gamma'(t) \right \vert \leq 2^{1/\ell} \sup_{z \in \mathbb{D}[2^{-1/\ell}]} |f(z)|  \end{align*}
Combining the two previous displays, we obtain:
\begin{align*} |f^{(\ell)}(0)| \leq \ell!\, 2^{1/\ell} \sup_{z \in \mathbb{D}[2^{-1/\ell}]} |f(z)|  \end{align*}
Since $a_f(\ell) = f^{(\ell)}(0)/\ell!$, this completes the proof of Proposition \ref{prop.cauchydiff}. \end{proof}

\begin{proposition} \label{prop.compunif} The compact uniform topology on $\operatorname{Cara}(\mathbb{D})$ agrees with the topology of pointwise convergence on Taylor coefficients. \end{proposition}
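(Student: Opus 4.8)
The plan is to prove the two inclusions of topologies separately, regarding the topology of pointwise convergence on Taylor coefficients as the initial topology on $\operatorname{Cara}(\mathbb{D})$ induced by the coefficient functionals $[\psi \mapsto a_\psi(\ell)]$, $\ell \in \mathbb{N}$. First I would record the one quantitative fact not already on the page: every $\psi \in \operatorname{Cara}(\mathbb{D})$ satisfies $|a_\psi(\ell)| \le 2$ for all $\ell \in \mathbb{N}_+$. This follows by writing $\psi = \psi^{\mu_\psi}$ via Corollary \ref{cor.bij} and applying Proposition \ref{prop.fou-tay}, which gives $a_\psi(\ell) = 2\hat{\mu_\psi}(\ell)$ and hence $|a_\psi(\ell)| = 2|\hat{\mu_\psi}(\ell)| \le 2$ since $\mu_\psi$ is a probability measure. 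In particular both any $\psi \in \operatorname{Cara}(\mathbb{D})$ and any difference $\phi - \psi$ of two such functions satisfy the hypothesis $|a_f(\ell)| \le 4$ of Proposition \ref{prop.geometricbound}; for the difference, note additionally that $a_{\phi - \psi}(0) = \phi(0) - \psi(0) = 0$.

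For the inclusion ``pointwise-Taylor $\subseteq$ compact-uniform'' it suffices to check that each coefficient functional is continuous for the compact-uniform topology, and this is immediate from the Cauchy integral formula $a_\psi(\ell) = \frac{1}{2\pi i}\int_{|z| = 1/2}\psi(z)z^{-\ell-1}\,dz$, which exhibits $[\psi \mapsto a_\psi(\ell)]$ as bounded by a constant times $\sup_{|z|=1/2}|\psi|$; this direction uses nothing about Carath\'{e}odory functions. For the reverse inclusion I would fix $\psi \in \operatorname{Cara}(\mathbb{D})$, a compact set $K \subseteq \mathbb{D}$, and $\epsilon > 0$, then pick $r \in (0,1)$ with $K \subseteq \mathbb{D}[r]$ and $N \in \mathbb{N}$ with $4r^{N+1}/(1-r) < \epsilon/2$, and consider the pointwise-Taylor-open neighbourhood
\[ U := \bigl\{ \phi \in \operatorname{Cara}(\mathbb{D}) : |a_\phi(\ell) - a_\psi(\ell)| < \tfrac{\epsilon}{2(N+1)} \text{ for all } \ell \in \{0,\ldots,N\} \bigr\} \]
of $\psi$. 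Applying Proposition \ref{prop.geometricbound} to $f := \phi - \psi$ for $\phi \in U$ yields $\sup_{z \in K}|\phi(z) - \psi(z)| \le \sup_{z \in \mathbb{D}[r]}|\phi(z)-\psi(z)| \le \sum_{\ell=0}^N |a_\phi(\ell) - a_\psi(\ell)| + 4r^{N+1}/(1-r) < \epsilon$, so $U$ is contained in the compact-uniform ball $\{\phi : \sup_K|\phi - \psi| < \epsilon\}$. Since such balls form a base for the compact-uniform topology, this shows the identity map from the pointwise-Taylor topology to the compact-uniform topology is continuous, which is exactly the second inclusion.

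I do not expect a genuine obstacle. The only points requiring a little care are: (i) the comparison of topologies should be argued at the level of open sets (or nets) rather than merely sequences — both topologies are in fact metrizable, so a sequential argument would also be valid, but working directly with open sets is cleanest here; and (ii) one must notice that Proposition \ref{prop.geometricbound} applies to a \emph{difference} of Carath\'{e}odory functions, which is precisely why the coefficient bound $|a_\psi(\ell)| \le 2$ is established first. Everything else is routine.
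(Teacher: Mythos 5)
Your proof is correct and follows essentially the same approach as the paper: the easy direction via a Cauchy-type estimate and the hard direction via Proposition \ref{prop.geometricbound} together with the coefficient bound $|a_\psi(\ell)| \le 2$. The only differences are cosmetic --- you argue at the level of open sets rather than sequences, apply the Cauchy integral formula directly on a fixed circle in place of the paper's Proposition \ref{prop.cauchydiff}, and supply a derivation of the coefficient bound where the paper simply asserts it.
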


\begin{proof}[Proof of Proposition \ref{prop.compunif}] Let $(\psi_n)_{n \in \mathbb{N}}$ be a sequence of functions in $\operatorname{Cara}(\mathbb{D})$, and let $\psi \in \operatorname{Cara}(\mathbb{D})$. First assume that $\psi_n$ in the $\psi$ in the compact uniform topology. For $\ell \in \mathbb{Z}$ we can use Proposition \ref{prop.cauchydiff} to find:
\begin{align*} |a_{\psi_n}(\ell) - a_\psi(\ell)| = |a_{\psi_n - \psi}(\ell)| \leq 2 \sup_{z \in \mathbb{D}[2^{-1/\ell}]} |\psi_n(z) - \psi(z)| \end{align*}
By hypothesis, the right side of the last display converges to zero by hypothesis as $n \to \infty$, and so we see $\lim_{n \to \infty} a_{\psi_n}(\ell) = a_\psi(\ell)$. Conversely, assume that $\lim_{n \to \infty} a_{\psi_n}(\ell) = a_\psi(\ell)$ for all $\ell \in \mathbb{Z}$. Let $\epsilon > 0$, let $r \in (0,1)$ and choose $N \in \mathbb{N}$ such that $4r^{N+1}/(1-r) \leq \epsilon/2$. Since the functions under hypothesis are Carath\'{e}odory, their Taylor coefficients are bounded by $2$, and so the Taylor coefficients of $\psi_n-\psi$ are bounded by $4$ for all $n \in \mathbb{N}$. Thus we can use Proposition \ref{prop.geometricbound} to find:
\begin{align*} \sup_{z \in \mathbb{D}[r]} |\psi_n(z) - \psi(z)| \leq \left( \sum_{\ell=0}^N |a_{\psi_n-\psi}(\ell)| \right) +  \frac{\epsilon}{2}  \end{align*}
Then if $n \in \mathbb{N}$ is large enough that $|a_{\psi_n}(\ell)-a_{\psi}(\ell)| \leq \epsilon/(2N+2)$ for all $\ell \in \{0,\ldots,N\}$, the last display will show that $|\psi_n(z)-\psi(z)| \leq \epsilon$ for all $z \in \mathbb{D}[r]$. Thus $(\psi_n)_{n \in \mathbb{N}}$ converges to $\psi$ in the compact-uniform topology, as required. \end{proof}

\subsubsection{Establishing the homeomorphism}

\begin{proposition} \label{prop.homeo} Endow the set $\prob(\mathbb{T})$ with the vague topology and the set $\operatorname{Cara}(\mathbb{D})$ with the compact-uniform topology. Then the mappings $[\mu \mapsto \psi^\mu]: \prob(\mathbb{T}) \to \operatorname{Cara}(\mathbb{D})$ and $[\psi \mapsto \mu_\psi]: \operatorname{Cara}(\mathbb{D}) \to \prob(\mathbb{T})$ are homeomorphisms.\end{proposition}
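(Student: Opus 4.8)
The plan is to deduce the proposition almost directly from the three preceding results. By Corollary \ref{cor.bij} the two maps are mutual inverses, so it suffices to prove that each of $[\mu \mapsto \psi^\mu]$ and $[\psi \mapsto \mu_\psi]$ is continuous. Both topologies involved are metrizable --- the vague topology on $\prob(\mathbb{T})$ because $\mathbb{T}$ is a compact metric space, and the compact-uniform topology on $\operatorname{Cara}(\mathbb{D})$ because $\operatorname{Cara}(\mathbb{D})$ is a subspace of the Fr\'{e}chet space of holomorphic functions on $\mathbb{D}$ --- so it is enough to verify sequential continuity. The engine of the argument is then the observation that, through the Fourier--Taylor correspondence, vague convergence and compact-uniform convergence both reduce to convergence of the same sequences of complex numbers.

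First I would show $[\mu \mapsto \psi^\mu]$ is continuous. Suppose $(\mu_n)_{n \in \mathbb{N}}$ converges vaguely to $\mu$. By Remark \ref{rem.vague} this says $\hat{\mu_n}(\ell) \to \hat{\mu}(\ell)$ for every $\ell \in \mathbb{Z}$, in particular for every $\ell \in \mathbb{N}_+$. Clause I of Proposition \ref{prop.fou-tay} then gives $a_{\psi^{\mu_n}}(\ell) = 2\hat{\mu_n}(\ell) \to 2\hat{\mu}(\ell) = a_{\psi^\mu}(\ell)$ for $\ell \in \mathbb{N}_+$, while $a_{\psi^{\mu_n}}(0) = 1 = a_{\psi^\mu}(0)$ since every \cara function equals $1$ at the origin. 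Hence $a_{\psi^{\mu_n}}(\ell) \to a_{\psi^\mu}(\ell)$ for all $\ell \in \mathbb{N}$, and Proposition \ref{prop.compunif} yields $\psi^{\mu_n} \to \psi^\mu$ in the compact-uniform topology.

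Next I would show $[\psi \mapsto \mu_\psi]$ is continuous by the symmetric argument. Suppose $(\psi_n)_{n \in \mathbb{N}}$ converges to $\psi$ compact-uniformly. By Proposition \ref{prop.compunif} this says $a_{\psi_n}(\ell) \to a_\psi(\ell)$ for all $\ell \in \mathbb{N}$. Clause II of Proposition \ref{prop.fou-tay} gives $\hat{\mu_{\psi_n}}(\ell) = \tfrac{1}{2}a_{\psi_n}(\ell) \to \tfrac{1}{2}a_\psi(\ell) = \hat{\mu_\psi}(\ell)$ for $\ell \in \mathbb{N}_+$ and $\hat{\mu_{\psi_n}}(0) = 1 = \hat{\mu_\psi}(0)$; for $\ell < 0$ one uses the relation $\hat{\nu}(-\ell) = \ov{\hat{\nu}(\ell)}$, valid for every probability measure $\nu$, to pass from convergence at positive indices to convergence at negative ones. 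Then Remark \ref{rem.vague} gives $\mu_{\psi_n} \to \mu_\psi$ vaguely. Together with Corollary \ref{cor.bij}, these two continuity statements establish that both maps are homeomorphisms.

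I do not expect a genuine obstacle here: the analytic substance lives entirely in Propositions \ref{prop.fou-tay} and \ref{prop.compunif} and in Remark \ref{rem.vague}. The only points requiring care are the reduction to sequences, which rests on metrizability of the two topologies, and the bookkeeping across the three indexings in play --- Taylor coefficients over $\mathbb{N}$, Fourier coefficients over $\mathbb{Z}$, the factor of $2$, and the conjugation linking $\ell$ with $-\ell$.
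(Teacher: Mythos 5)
Your proof is correct and takes essentially the same route as the paper's: both argue through the Fourier--Taylor correspondence, converting vague convergence to Fourier-coefficient convergence (Remark \ref{rem.vague}), passing to Taylor coefficients via Proposition \ref{prop.fou-tay}, and then to compact-uniform convergence via Proposition \ref{prop.compunif}, and conversely. The only differences are cosmetic: you make explicit the appeal to Corollary \ref{cor.bij} for mutual-inverse-ness and the metrizability point justifying reduction to sequences, and you track the factor of $2$ and the conjugation at negative indices more carefully than the paper does, where these are left implicit.
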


\begin{proof}[Proof of Proposition \ref{prop.homeo}] First, let $(\mu_n)_{n \in \mathbb{N}}$ be a sequence of elements of $\prob(\mathbb{T})$ which converges to $\prob(\mathbb{T})$ in the vague topology. Then Remark \ref{rem.vague} shows that $\lim_{n \to \infty} \hat{\mu_n}(\ell) = \hat{\mu}(\ell)$ for all $\ell \in \mathbb{Z}$, and so Clause I in Proposition \ref{prop.fou-tay} shows that $\lim_{n \to \infty} a_{\psi^{\mu_n}}(\ell) = a_{\psi^\mu}(\ell)$  for all $\ell \in \mathbb{Z}$. Then Proposition \ref{prop.compunif} shows that $\psi^{\mu_n}$ converges to $\psi^\mu$ in the compact-uniform topology, as required.\\
\\
Conversely, let $(\psi_n)_{n \in \mathbb{N}}$ be a sequence of elements of $\operatorname{Cara}(\mathbb{D})$ which converges to $\psi \in \operatorname{Cara}(\mathbb{D})$ in the compact-uniform topology. Then Proposition \ref{prop.compunif} implies that $\lim_{n \to \infty} a_{\psi_n}(\ell) = a_\psi(\ell)$ for all $\ell \in \mathbb{Z}$ and so Proposition \ref{prop.fou-tay} implies that $\lim_{n \to \infty} \hat{\mu_{\psi_n}}(\ell) = \hat{\mu_\psi}(\ell)$ for all $\ell \in \mathbb{Z}$. Then Remark \ref{rem.vague} implies that $(\mu_{\psi_n})_{n \in \mathbb{N}}$ converges to $\mu_\psi$ in the vague topology, as required. \end{proof}

\subsection{Important special cases of the correspondence} \label{sec.special}

It will be useful to understand what the Lebesgue and point measures correspond to under the mapping we have just described. Clause I in Proposition \ref{prop.fou-tay} implies that the following holds for all $\ell \in \mathbb{N}$.
\begin{align} a_{\psi^\lambda}(\ell) = \begin{cases} 1 & \mbox{ if } \ell = 0 \\ 0 & \mbox{ if } \ell \neq 0 \end{cases} \end{align}
Thus $\psi^\lambda(z) = 1$ for all $z \in \mathbb{D}$. \\
\\
In the other extreme, let $\omega \in \mathbb{T}$ and consider the pure point measure $\boldsymbol{\delta}_{\omega} \in \prob(\mathbb{T})$. Using the definition in (\ref{eq.psimu-def}) we find that the following holds for all $z \in \mathbb{D}$.
\begin{align*}
\psi^{\delta_{\omega}}(z) &= \int_{\bbt} \mathcal{H}(\varsigma, z) \deee\boldsymbol{\delta}_{\omega}(\varsigma) = \mathcal{H}(\omega, z) 
\end{align*}
Thus the pure point measure at $\omega$ corresponds to a Herglotz kernel peaking at $\omega$.

\subsection{Exchange of invariant measures and circular functions}

Next, we establish that our correspondence exchanges $\times n$-circular functions and $\times n$-invariant measures. 
\
\begin{proposition} \label{coeffs} We have the following for all $n \in \mathbb{N}$. 
\begin{description} \item[(Clause I)] A measure $\mu \in \prob(\mathbb{T})$ is $\times n$-invariant if and only if $\hat{\mu}_{\psi}(\ell) = \hat{\mu}_{\psi}(n\ell)$ for all $\ell \in \mathbb{Z}$. 
\item[(Clause II)]  A holomorphic function $\psi$ is $\times n$-circular if and only if $a_{\psi}(\ell) = a_{\psi}(n\ell)$ for all $\ell \in \mathbb{Z}$.
\end{description}

\end{proposition}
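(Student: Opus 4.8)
The plan is to prove the two clauses separately, in each case transferring the defining functional identity to the Fourier, respectively Taylor, side, where it becomes a transparent coefficient relation.

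For Clause I, I would first recall that the trigonometric polynomials are uniformly dense in $C(\mathbb{T})$ (Stone--Weierstrass, or Fej\'er's theorem), so that $\mu$ is $\times n$-invariant if and only if the defining identity $\int_\mathbb{T} f(\omega^n)\deee\mu(\omega) = \int_\mathbb{T} f(\omega)\deee\mu(\omega)$ holds merely for every character $f(e^{i\theta}) = e^{-i\ell\theta}$, $\ell \in \mathbb{Z}$. For such an $f$ one has $f(\omega^n) = e^{-in\ell\theta}$, so by the convention (\ref{eq.fou-con}) the left-hand side is $\hat{\mu}(n\ell)$ and the right-hand side is $\hat{\mu}(\ell)$; hence $\times n$-invariance is equivalent to $\hat{\mu}(\ell) = \hat{\mu}(n\ell)$ for all $\ell \in \mathbb{Z}$. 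The forward direction here is immediate, while for the converse one approximates an arbitrary continuous $f$ uniformly by trigonometric polynomials, for which the identity holds by linearity, and passes to the limit under the integral sign, which is legitimate since $\mu$ is a finite measure.

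For Clause II, the key tool is the root-of-unity filter coming from the discrete orthogonality of characters of $\mathbb{Z}/n\mathbb{Z}$: namely $\tfrac{1}{n}\sum_{k=0}^{n-1} e^{2\pi i k\ell/n}$ equals $1$ when $n \mid \ell$ and $0$ otherwise. Writing $\psi(z) = \sum_{\ell\geq 0} a_\psi(\ell)z^\ell$ and interchanging the finite average with the (absolutely convergent on compacta) power series, one gets $\tfrac{1}{n}\sum_{k=0}^{n-1}\psi(e^{2\pi i k/n}z) = \sum_{j\geq 0} a_\psi(nj)z^{nj}$, whereas directly $\psi(z^n) = \sum_{j\geq 0} a_\psi(j)z^{nj}$. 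Both are holomorphic functions on $\mathbb{D}$ whose power series are supported on exponents divisible by $n$, so by uniqueness of Taylor expansions $\psi$ is $\times n$-circular if and only if $a_\psi(j) = a_\psi(nj)$ for every $j \in \mathbb{N}$; with the convention $a_\psi(\ell) = 0$ for $\ell < 0$ this is precisely the asserted identity for all $\ell \in \mathbb{Z}$ (and if one prefers to avoid that convention, the conclusion is simply read as quantified over $\ell \in \mathbb{N}$, losing nothing since the relation is vacuous for negative $\ell$).

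I do not anticipate a genuine obstacle: both clauses reduce to standard facts — uniform density of trigonometric polynomials, and orthogonality of roots of unity — and the only steps deserving a word of care are the passage to the limit under the integral in the converse of Clause I and the term-by-term identification of coefficients of two convergent power series in Clause II, both of which are routine.
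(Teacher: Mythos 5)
Your proof is correct and takes essentially the same route as the paper: Clause I via testing on the characters $z\mapsto z^{-\ell}$ together with uniform density of trigonometric/Laurent polynomials in $C(\mathbb{T})$, and Clause II via the root-of-unity filter and term-by-term comparison of Taylor coefficients. The only minor remark worth making is that you explicitly flag the cosmetic issue that $a_\psi(\ell)$ is undefined for $\ell<0$, which the paper silently resolves by quantifying over $\ell\in\mathbb{N}$ in its concluding sentence.
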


\begin{proof} Fixing $n \in \mathbb{N}$, we address the two clauses in Proposition \ref{coeffs} separately.

\paragraph{Proof of Clause I:} First, suppose that $\mu$ is $\times n$-invariant and apply the criterion in Definition \ref{invdef} with $f(z)=z^{-\ell}$. We obtain:

\begin{align*} \hat{\mu}(\ell)= \int_{\bbt} z^{-\ell} \deee \mu(z) = \int_{\bbt} (z^n)^{-\ell} \deee \mu{z} = \int_{\bbt} z^{-n\ell} \deee \mu(z) = \hat{\mu}(n\ell) \end{align*}
Conversely, suppose that $\hat{\mu}(\ell) = \hat{\mu}(n\ell)$ for all $\ell \in \mathbb{Z}$. Let $n \in \mathbb{N}$, let $c_{-n},\ldots,c_n \in \mathbb{C}$ and define a Laurent polynomial $p:\mathbb{T} \to \mathbb{C}$ by:
\begin{align*} p(z) \coloneqq \sum_{\ell=-n}^{n} c_{\ell} z^{\ell} \end{align*}
We compute:
\begin{align*}
\int_{\bbt} p(z) \deee \mu(z)  = \int_{\bbt}\sum_{\ell=-n}^{n} c_{\ell} z^{\ell} \deee \mu(z) &= \sum_{\ell=-n}^{n} c_{\ell} \int_{\bbt} z^{\ell} \deee \mu(z) \\
& = \sum_{\ell=-n}^{n} c_{\ell} \hat{\mu}(-\ell) 
= \sum_{\ell=-n}^{n} c_{\ell} \hat{\mu}(-n\ell)  = \sum_{\ell=-n}^{n} c_{\ell} \int_{\bbt} z^{n\ell} \deee \mu(z) = \int_{\bbt} p(z^n) \deee \mu(z)
\end{align*}
According to Corollary 5.4 in Chapter 2 of \cite{MR1970295}, Laurent polynomials are uniformly dense in $C(\bbt)$. Fix $\epsilon>0$, let $f(z) \in C(\bbt)$, and choose $p: \mathbb{T}  \to \mathbb{C}$ to be a Laurent polynomial such that $|f(z) - p(z) | \leq\frac{\epsilon}{2}$ for all $z \in \mathbb{T}$.
Then we have
\begin{align*} \left|\int_{\bbt} f(z) - p(z) d\mu(z)\right| &  \leq  \frac{\varepsilon}{2} &  \left|\int_{\bbt} f(z^n) - p(z^n) d\mu(z)\right| \leq  \frac{\epsilon}{2} \end{align*}
and therefore:
\begin{align*} \left|\int_{\bbt} f(z^n) - f(z) \deee \mu(z)\right| \leq \left|\int_{\bbt} f(z) - p(z) d\mu(z)\right| + \left|\int_{\bbt} p(z^n) - p(z) d\mu(z)\right| +  \left|\int_{\bbt} f(z^n) - p(z^n) d\mu(z)\right| \leq \epsilon \end{align*}
Thus we see that $\mu$ is indeed $\times n$-invariant. This completes the proof of Clause I.

\paragraph{Proof of Clause II:} Let $\psi:\mathbb{D} \to \mathbb{C}$ be a holomorphic function. For all $z \in \mathbb{D}$ we have:
\begin{align} \psi(z^n)= \sum_{\ell=0}^{\infty} a_{\psi}(\ell) z^{n\ell} \label{eq.jop-3} \end{align}
Furthermore, we compute:
\begin{align}
\frac{1}{n} \sum_{k=0}^{n-1} \psi(e^{2\pi i k/n}z) & = \frac{1}{n} \sum_{k=0}^{n-1} \sum_{\ell=0}^{\infty} a_{\psi}(\ell) e^{2\pi i k\ell/n}z^{\ell} \nonumber \\
& = \sum_{\ell=0}^{\infty} a_{\psi}(\ell) \left( \frac{1}{n} \sum_{k=0}^{n-1} e^{2\pi i k\ell/n} \right)z^{\ell} \label{eq.jop-1} \\
& = \sum_{\ell=0}^{\infty} a_{\psi}(n\ell) z^{n \ell} \label{eq.jop-2}
\end{align}
where the last equality follows from the fact that 
\begin{align*} \frac{1}{n} \sum_{k=0}^{n-1} e^{2\pi i k\ell/n} = \begin{cases} 1 & \mbox{if }n\mbox{ divides }\ell \\ 0 & \mbox{else} \end{cases} \end{align*}
Thus $\psi$ is $\times n$-circular if and only if the series in (\ref{eq.jop-3}) agrees with that in (\ref{eq.jop-2}). It is immediate that these series agree if and only if when $a_{\psi}(n\ell)=a_{\psi}(\ell)$ for all $\ell \in \mathbb{N}$, and so the proof of Clause II is complete.
\end{proof}

By combining Proposition \ref{prop.fou-tay} with Proposition \ref{coeffs}, we obtain what for our purposes is the key property of the Herglotz correspondence.

\begin{corollary} \label{cor.coeffs} \begin{description} \item[(Clause I)] A measure $\mu \in \prob(\mathbb{T})$ is $\times n$-invariant if and only if the function $\psi^\mu \in \operatorname{Cara}(\mathbb{D})$ is $\times n$-circular.
\item[(Clause II)] A function $\psi \in \operatorname{Cara}(\mathbb{D})$ is $\times n$-circular if and only if the measure $\mu_\psi \in \prob(\mathbb{T})$ is $\times n$-invariant. \end{description}
	
\end{corollary}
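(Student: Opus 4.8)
The plan is to derive Corollary \ref{cor.coeffs} as a formal consequence of the two earlier results, with essentially no new computation. The logical skeleton is: translate the invariance/circularity condition into a statement about coefficients using Proposition \ref{coeffs}, then use Proposition \ref{prop.fou-tay} to transport that coefficient statement across the Herglotz correspondence.

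For Clause I, I would argue as follows. Fix $n \in \mathbb{N}$ and let $\mu \in \prob(\mathbb{T})$. By Clause I of Proposition \ref{coeffs}, $\mu$ is $\times n$-invariant if and only if $\hat{\mu}(\ell) = \hat{\mu}(n\ell)$ for all $\ell \in \mathbb{Z}$. By Clause I of Proposition \ref{prop.fou-tay}, $\hat{\mu}(\ell) = \tfrac{1}{2} a_{\psi^\mu}(\ell)$ for $\ell > 0$, $\hat{\mu}(0) = 1 = \tfrac12 a_{\psi^\mu}(0)$ (using $a_{\psi^\mu}(0)=\psi^\mu(0)=2$... — wait, no: $\psi^\mu(0)=1$, so here one must be slightly careful and instead read off that $\hat\mu(0)=1$ always and $a_{\psi^\mu}(0)=1$ always, so the $\ell=0$ constraint is vacuous on both sides), and $\hat{\mu}(-\ell) = \tfrac12 \ov{a_{\psi^\mu}(\ell)}$ for $\ell>0$. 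Hence the condition $\hat{\mu}(\ell) = \hat{\mu}(n\ell)$ for all $\ell \in \mathbb{Z}$ is equivalent to $a_{\psi^\mu}(\ell) = a_{\psi^\mu}(n\ell)$ for all $\ell \in \mathbb{N}_+$ (the positive and negative indices giving the same family of constraints via complex conjugation, and $\ell=0$ being automatic). By Clause II of Proposition \ref{coeffs}, this last condition is exactly the statement that $\psi^\mu$ is $\times n$-circular. Chaining these equivalences gives Clause I.

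For Clause II, I would do the symmetric argument: let $\psi \in \operatorname{Cara}(\mathbb{D})$. By Clause II of Proposition \ref{coeffs}, $\psi$ is $\times n$-circular iff $a_\psi(\ell) = a_\psi(n\ell)$ for all $\ell \in \mathbb{Z}$ (equivalently all $\ell \in \mathbb{N}_+$). By Clause II of Proposition \ref{prop.fou-tay}, $\hat{\mu_\psi}(\ell) = \tfrac12 a_\psi(\ell)$ for $\ell>0$ and the conjugate relation for $\ell<0$, with $\ell=0$ automatic; so $a_\psi(\ell)=a_\psi(n\ell)$ for all $\ell$ is equivalent to $\hat{\mu_\psi}(\ell) = \hat{\mu_\psi}(n\ell)$ for all $\ell \in \mathbb{Z}$. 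By Clause I of Proposition \ref{coeffs}, that is exactly $\times n$-invariance of $\mu_\psi$. Alternatively, once Clause I is proved, Clause II follows immediately from Corollary \ref{cor.bij}: given $\times n$-circular $\psi$, set $\mu = \mu_\psi$, note $\psi^\mu = \psi$, and apply Clause I; I would probably present this shortcut.

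There is no real obstacle here — the only thing to watch is bookkeeping around the index $\ell = 0$ and the factor of $\tfrac12$, making sure the correspondence between "$\hat\mu(\ell)=\hat\mu(n\ell)$ for all $\ell\in\mathbb Z$" and "$a(\ell)=a(n\ell)$ for all $\ell$" is clean (the $\ell=0$ case is degenerate on both sides and the negative $\ell$ constraints are conjugates of the positive ones, so nothing is lost). I would write the proof in two short paragraphs mirroring the two clauses, each a one-line chain of "iff"s citing Propositions \ref{prop.fou-tay} and \ref{coeffs}.
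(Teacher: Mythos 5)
Your proposal is correct and matches the paper's (implicit) argument exactly: the paper states the corollary "by combining Proposition \ref{prop.fou-tay} with Proposition \ref{coeffs}," which is precisely the chain of equivalences you spell out, including the observation that the $\ell=0$ constraint is vacuous and the negative-index constraints are conjugates of the positive ones. The alternative derivation of Clause II from Clause I via Corollary \ref{cor.bij} is a perfectly good shortcut but not a genuinely different route.
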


\subsection{Analysis of finitary objects} \label{sec.ooska}

\subsubsection{Finite support and roots of unity}

For $\mu \in \prob(\mathbb{T})$ and $\omega \in \mathbb{T}$ it will be convenient to write $\mu(\omega)$ for the nonnegative number $\mu(\{\omega\})$. 

\begin{proposition} \label{prop.roots} Suppose $\mu \in \prob(\mathbb{T};2,3)$ is finitely supported. Then $\mu$ is supported on the roots of unity. \end{proposition}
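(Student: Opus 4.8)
The plan is to use the Fourier-analytic characterization from Proposition \ref{coeffs} together with a pigeonhole argument on the finite support. Suppose $\mu \in \prob(\mathbb{T};2,3)$ is finitely supported, say $\operatorname{supp}(\mu) = \{\omega_1,\ldots,\omega_r\}$. By Clause I of Proposition \ref{coeffs}, $\times 2$-invariance gives $\hat\mu(\ell) = \hat\mu(2\ell)$ and $\times 3$-invariance gives $\hat\mu(\ell) = \hat\mu(3\ell)$ for all $\ell \in \mathbb{Z}$; hence $\hat\mu(\ell) = \hat\mu(2^a 3^b \ell)$ for all $a,b \in \mathbb{N}$ and all $\ell \in \mathbb{Z}$. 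Writing $\hat\mu(\ell) = \sum_{j=1}^r \mu(\omega_j)\,\omega_j^{-\ell}$, the idea is that for a fixed $\ell \neq 0$ the sequence $(\hat\mu(2^a 3^b\ell))_{a,b}$ runs through infinitely many ``sampling frequencies'' but takes only the constant value $\hat\mu(\ell)$, and this rigidity should force each $\omega_j$ in the support that carries positive mass and is not a root of unity to actually contribute nothing — a contradiction unless every $\omega_j$ is a root of unity.

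To make this precise I would argue as follows. Fix any $\omega_j \in \operatorname{supp}(\mu)$ and suppose it is not a root of unity. Consider the pushforward structure: since $\mu$ is $\times 2$- and $\times 3$-invariant, the maps $\omega \mapsto \omega^2$ and $\omega \mapsto \omega^3$ carry $\operatorname{supp}(\mu)$ into itself (this follows because if $\mu(\{\omega\}) > 0$ and $\times n$-invariance holds, then $\mu(\{\omega^n\}) \geq \mu(\{\omega\}) > 0$ by testing against approximations of the indicator of $\{\omega^n\}$ — more carefully, $\times n$-invariance means $(\mu \text{ pushed forward by } \omega\mapsto\omega^n) = \mu$, so the pushforward of an atom is an atom of at least the same mass). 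Since $\operatorname{supp}(\mu)$ is finite, the orbit of $\omega_j$ under the monoid generated by $z\mapsto z^2$ and $z \mapsto z^3$ is finite, so there exist exponents with $\omega_j^{2^a 3^b} = \omega_j^{2^{a'}3^{b'}}$ for $(a,b) \neq (a',b')$, whence $\omega_j^{2^a 3^b - 2^{a'}3^{b'}} = 1$ with a nonzero integer exponent. Therefore $\omega_j$ is a root of unity, contradicting our assumption. Hence every element of $\operatorname{supp}(\mu)$ is a root of unity.

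The cleanest route is thus the dynamical one rather than the Fourier one: the main lemma to establish carefully is that $\times n$-invariance of $\mu$ implies that the map $P_n: \omega \mapsto \omega^n$ satisfies $(P_n)_*\mu = \mu$, and consequently $P_n(\operatorname{supp}(\mu)) \subseteq \operatorname{supp}(\mu)$ — indeed $P_n(\operatorname{supp}(\mu)) = \operatorname{supp}(\mu)$ since $P_n$ restricted to a finite invariant set pushing a measure to itself must be onto the support. From finiteness of the support, the commuting maps $P_2, P_3$ each permute... actually only map into, but a self-map of a finite set that is part of a measure-preserving relation is a bijection on the support; in any case each point has a finite forward orbit under the monoid $\langle P_2, P_3\rangle$, which yields a nontrivial multiplicative relation $\omega^{m} = 1$ for some $m \in \mathbb{Z}\setminus\{0\}$.

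The step I expect to be the main obstacle is the rigorous justification that $\times n$-invariance (stated in Definition \ref{invdef} purely in terms of integrals of continuous functions) implies $(P_n)_*\mu = \mu$ as measures, and then that atoms map to atoms of no-smaller mass. This is a standard Riesz-representation / pushforward argument — the continuous-function identity $\int f(\omega^n)\,d\mu = \int f\,d\mu$ is exactly the statement $\int f\,d((P_n)_*\mu) = \int f\,d\mu$ for all $f \in C(\mathbb{T})$, and by uniqueness in Riesz representation this forces $(P_n)_*\mu = \mu$; then for any $\omega_0 \in \mathbb{T}$, $\mu(\{\omega_0\}) = ((P_n)_*\mu)(\{\omega_0\}) = \mu(P_n^{-1}\{\omega_0\}) \geq \mu(\{\eta\})$ for any $\eta$ with $\eta^n = \omega_0$. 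Once this is in hand, the combinatorial finiteness argument is immediate. An alternative, more self-contained finish avoiding pushforward subtleties is to stay entirely with Fourier coefficients: with $\operatorname{supp}(\mu) = \{\omega_1,\ldots,\omega_r\}$ and weights $c_j = \mu(\omega_j) > 0$, the relation $\hat\mu(\ell) = \hat\mu(2^a3^b\ell)$ for all $a,b$ says $\sum_j c_j \omega_j^{-\ell} = \sum_j c_j \omega_j^{-2^a3^b\ell}$; specializing and using linear independence of distinct characters $\ell \mapsto \omega_j^{-\ell}$ over the (finite-dimensional, by the invariance) space spanned by the frequency-orbit would again pin down that the $\omega_j$ must be roots of unity, but this is fiddlier, so I would present the dynamical argument as the main proof.
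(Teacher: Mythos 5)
Your dynamical argument is essentially the paper's proof: the paper likewise observes that $\times 2$-invariance forces the support to be closed under $\omega \mapsto \omega^2$, so the sequence $\omega, \omega^2, \omega^4, \ldots$ lies in the finite set $\operatorname{supp}(\mu)$ and must repeat, giving $\omega^{2^j-2^k}=1$. Your careful treatment of the pushforward step — that $(P_n)_*\mu = \mu$ yields $\mu(\{\omega^n\}) \geq \mu(\{\omega\})$ rather than equality — is in fact slightly more precise than the paper, which asserts the equality $\mu(e^{i2^k\theta}) = \mu(e^{i\theta})$ without justification, whereas the inequality is exactly what is needed (and all that holds in general) to conclude $\omega^n \in \operatorname{supp}(\mu)$.
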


\begin{proof}[Proof of Proposition \ref{prop.roots}]  Write $S$ for the support of $\mu$, so we have:
\begin{equation*} S = \{\omega \in \mathbb{T}: \mu(\omega) > 0\} \end{equation*} 
Let $\omega \in S$ and write $\omega = e^{i\theta}$ for some $\theta \in \mathbb{R}$. Since $\mu$ was assumed to be $\times (2,3)$-invariant we have $\mu(e^{i2^k\theta}) = \mu(e^{i\theta})$ for all $k \in \mathbb{N}$, and hence $e^{i2^k\theta} \in S$ for all $k \in \mathbb{N}$.\\
\\
The last assertion implies that the set $\{e^{i2^k\theta}:k \in \mathbb{N}\}$ is finite. Letting $j,k \in \mathbb{N}$ be distinct numbers such that $e^{i2^j\theta} = e^{i2^k\theta}$, we find $e^{i(2^j-2^k)\theta} = 1$. Using the assumption that $k \neq \ell$, we can choose $d = 2^j-2^k$ to find that $\omega^d = 1$. This completes the proof of Proposition \ref{prop.roots}.\end{proof}

\subsubsection{Ergodic finitely supported measures}

For a point $\omega \in \mathbb{T}$ let $\mathcal{O}(\omega)$ denote the orbit of $\omega$ under the $\times (2,3)$ semigroup, given explicitly as follows.
\begin{align*} \mathcal{O}(\omega) \coloneqq \{\omega^{2^j3^k}: j,k \in \mathbb{N}  \} \end{align*}

\begin{proposition} \label{prop.uniformorbit} Let $\mu \in \mathrm{Prob}(\mathbb{T}; 2,3)$ be finitely supported and ergodic. Then there exists a root of unity $\omega \in \mathbb{T}$ such that:
\begin{align*} \mu = \frac{1}{|\mathcal{O}(\omega)|} \sum_{\vartheta \in \mathcal{O}(\omega)} \boldsymbol{\delta}_\vartheta \end{align*} \end{proposition}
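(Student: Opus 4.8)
The plan is to exploit ergodicity to show that a finitely supported $\times(2,3)$-invariant measure is uniformly distributed on a single orbit of the $\times(2,3)$ semigroup. First I would invoke Proposition \ref{prop.roots} to conclude that $\mu$ is supported on finitely many roots of unity, so the support $S$ is a finite subset of the roots of unity. Next I would observe that $S$ is invariant under the maps $\omega \mapsto \omega^2$ and $\omega \mapsto \omega^3$: indeed, $\times n$-invariance together with the pointwise formula $\hat\mu$-style argument — or more directly, the pushforward description — shows that $\mu(\omega^2) \geq \mu(\omega)$ for the power map, and a short counting argument on the finite set $S$ forces the power maps to permute $S$ and to preserve the weights in the sense that $\mu(\vartheta^2) = \mu(\vartheta)$ and $\mu(\vartheta^3)=\mu(\vartheta)$ for all $\vartheta \in S$. (Concretely: since $S$ is finite and $\times n$-invariance gives $\sum_{\vartheta^n = \xi}\mu(\vartheta) = \mu(\xi)$, and the $n$-th power map is surjective from $S$ onto $S$ once one knows $S$ consists of roots of unity and is power-stable, one deduces each fiber is a singleton within $S$.)

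Then I would consider the equivalence relation on $S$ generated by $\vartheta \sim \vartheta^2$ and $\vartheta \sim \vartheta^3$; its classes are exactly the intersections of $S$ with the $\times(2,3)$-semigroup orbits. For each such class $C$, the restriction $\mu_C$ of $\mu$ to $C$, suitably normalized, is again $\times(2,3)$-invariant (here one uses that the power maps preserve $S$ and the weights, so they preserve each class). Writing $\mu = \sum_C \mu(C)\,\frac{\mu_C}{\mu(C)}$ as a convex combination of $\times(2,3)$-invariant probability measures and applying ergodicity, I conclude there is exactly one class $C$, i.e.\ $S = \mathcal{O}(\omega) \cap \{\text{roots of unity}\} = \mathcal{O}(\omega)$ for any $\omega \in S$ — and $\mathcal{O}(\omega)$ is automatically finite since $\omega$ is a root of unity. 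Finally, since the weight function $\vartheta \mapsto \mu(\vartheta)$ is constant along the orbit (as $\mu(\vartheta^2)=\mu(\vartheta^3)=\mu(\vartheta)$ and the $\times(2,3)$ semigroup acts transitively on $\mathcal{O}(\omega)$), all point masses are equal, forcing $\mu = \frac{1}{|\mathcal{O}(\omega)|}\sum_{\vartheta \in \mathcal{O}(\omega)}\boldsymbol{\delta}_\vartheta$.

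The step I expect to be the main obstacle is the careful verification that the power maps $\omega \mapsto \omega^2$ and $\omega \mapsto \omega^3$, when restricted to the finite support $S$, are \emph{bijections of $S$ that preserve the $\mu$-weights} — i.e.\ that $\mu(\vartheta^n) = \mu(\vartheta)$ rather than merely an inequality. The subtlety is that $\times n$-invariance a priori only says that pushing $\mu$ forward under the $n$-th power map returns $\mu$, which gives $\sum_{\eta : \eta^n = \xi} \mu(\eta) = \mu(\xi)$; one must rule out fibers of size $\geq 2$ within $S$ by a pigeonhole/cardinality argument, using that the $n$-th power map sends $S$ into $S$ and that $|S| < \infty$. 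Once this combinatorial fact is pinned down, the decomposition-into-orbits and ergodicity argument is routine, and the constancy of weights along an orbit is immediate from transitivity of the $\times(2,3)$ semigroup action on $\mathcal{O}(\omega)$ together with weight-preservation.
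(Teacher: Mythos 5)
Your proposal is correct and takes essentially the same route as the paper: reduce to roots of unity via Proposition \ref{prop.roots}, observe that the support is a disjoint union of $\times(2,3)$-orbits with constant weight on each orbit, write $\mu$ as a convex combination of uniform orbit measures, and invoke ergodicity to force a single orbit. You spell out more carefully than the paper the verification that the power maps act as weight-preserving bijections of the finite support (the paper simply calls this step \emph{immediate}), and this extra detail is worthwhile --- just note that the surjectivity of the power map on $S$ comes from the measure identity $\sum_{\eta:\eta^n=\xi}\mu(\eta)=\mu(\xi)$ and a total-mass count, not merely from $S$ being a power-stable set of roots of unity.
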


\begin{proof}[Proof of Proposition \ref{prop.uniformorbit}] Proposition \ref{prop.roots} implies that $\operatorname{supp}(\mu)$ is a disjoint union of orbits of roots of unity. Let $m$ be the number of distinct orbits contained in the support of $\mu$, and enumerate these orbits as $\mathcal{O}_1,\ldots, \mathcal{O}_m$. It is immediate that $\mu(\omega) = \mu(\vartheta)$ for all $\ell \in \{1,\ldots,m\}$ and all $\omega,\vartheta \in \mathcal{O}_\ell$. Thus in order to complete the proof of Proposition \ref{prop.uniformorbit} it suffices to show that $m=1$. Indeed, we have
\begin{align} \mu = \sum_{\ell=1}^m \frac{1}{|\mathcal{O}_\ell|} \sum_{\vartheta \in \mathcal{O}_\ell} \boldsymbol{\delta}_\vartheta \end{align}
and so ergodicity of $\mu$ implies that $m=1$, as required. \end{proof}

\subsubsection{Maximal roots of finitely supported measures}

Proposition \ref{prop.roots} justifies the following definition.

\begin{definition} \label{def.maxr} Let $\mu \in \prob(\mathbb{T};2,3)$ be finitely supported. We define the \emph{maximal root of} $\mu$ to be the maximal natural number $d$ such that the support of $\mu$ contains a $d^{\mrm{th}}$ root a unity. We denote the maximal root of $\mu$ by $\operatorname{maxr}(\mu)$. \end{definition}

\begin{proposition} \label{prop.maxr} Let $\mu \in \prob(\mathbb{T};2,3)$ be finitely supported and ergodic. Then we have:
\begin{align*}   \lvert \operatorname{supp}(\mu) \rvert \leq  \operatorname{rmax}(\mu) \leq  6^{\lvert \operatorname{supp}(\mu) \rvert} \end{align*} \end{proposition}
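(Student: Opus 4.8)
The plan is to reduce the statement to the explicit form of $\mu$ given by Proposition \ref{prop.uniformorbit}: fix a root of unity $\omega$ with $\operatorname{supp}(\mu) = \mathcal{O}(\omega)$ and $\mu$ the uniform probability measure on the finite set $\mathcal{O}(\omega)$, and write $N := \lvert\operatorname{supp}(\mu)\rvert = \lvert\mathcal{O}(\omega)\rvert$ and $d := \operatorname{ord}(\omega)$ for the multiplicative order of $\omega$. Every element of $\mathcal{O}(\omega) = \{\omega^{2^j3^k} : j,k \in \mathbb{N}\}$ is a power of $\omega$ and hence has order dividing $d$, while $\omega$ itself lies in $\operatorname{supp}(\mu)$ and has order exactly $d$; thus $\operatorname{maxr}(\mu) = d$, and it suffices to prove $N \le d$ and $d \le 6^N$. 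The first of these is immediate, since $\mathcal{O}(\omega) \subseteq \langle\omega\rangle$ and $\lvert\langle\omega\rangle\rvert = d$.

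For the bound $d \le 6^N$ the key point is that $\times 2$-invariance of $\mu$ forces the squaring map $\sigma\colon z \mapsto z^2$ to restrict to a bijection of $S := \operatorname{supp}(\mu)$. Indeed, the criterion in Definition \ref{invdef} asserts precisely that $\sigma_\ast\mu$ and $\mu$ assign the same integral to every continuous function, so $\sigma_\ast\mu = \mu$; as $\mu$ is uniform on the finite set $S$ and $\sigma(S)\subseteq S$ (because $\mathcal{O}(\omega)$ is closed under squaring), comparing the masses that $\sigma_\ast\mu$ and $\mu$ place on each singleton $\{y\}$, $y\in S$, gives $\lvert\sigma^{-1}(y)\cap S\rvert = 1$, i.e.\ $\sigma|_S$ is a bijection of $S$.

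Now iterate $\sigma$ starting from $\omega$: the points $\omega,\omega^2,\omega^{2^2},\dots$ all lie in $S$, and injectivity of $\sigma|_S$ makes this sequence purely periodic, so there is a least $k\ge 1$ with $\omega^{2^k} = \omega$. The $k$ points $\omega^{2^0},\dots,\omega^{2^{k-1}}$ are then distinct elements of $S$, whence $k \le N$; and $\omega^{2^k}=\omega$ gives $d \mid 2^k-1$, so $d \le 2^k - 1 < 2^k \le 2^N \le 6^N$. Combined with $N\le d$, this proves the proposition.

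The only step requiring genuine care is the passage from $\times 2$-invariance to bijectivity of $\sigma$ on $\operatorname{supp}(\mu)$; everything after that is elementary arithmetic modulo $d$. Note that $\times 3$-invariance is used nowhere in these two inequalities --- one could just as well run the argument with the cubing map --- and the constant $6$ rather than $2$ is chosen so that the identical proof with $a$ in place of $2$ yields $\operatorname{maxr}(\mu) \le a^N \le (ab)^N$ in the general $\times(a,b)$ setting of Remark \ref{rem.six}.
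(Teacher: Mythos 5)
Your proof is correct and follows the same basic strategy as the paper's: the lower bound is immediate from $\mathcal{O}(\omega)\subseteq\langle\omega\rangle$, and the upper bound comes from a bounded return time for a power map on the finite orbit. Where you differ is in execution. The paper asserts, with only brief justification, that there are $j,k\in\mathbb{N}$ with $0<j,k\le n$ and $\omega^{2^j3^k}=\omega$, and then bounds $\operatorname{maxr}(\mu)\le 2^j3^k\le 6^n$. You instead establish explicitly that the squaring map $\sigma$ restricts to a bijection of the support (by comparing $\sigma_*\mu$ with $\mu$ on singletons, using that $\mu$ is uniform on the finite orbit), and you run the cycle argument with $\times 2$ alone, obtaining the sharper estimate $\operatorname{maxr}(\mu)\le 2^n-1<6^n$ and making clear that $\times 3$-invariance plays no role in either inequality. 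Your closing observation about why the constant $6$ rather than $2$ is used --- so that the identical proof transfers verbatim to the $\times(a,b)$ setting of Remark \ref{rem.six} --- is accurate and worth noting.
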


\begin{proof}[Proof of Proposition \ref{prop.maxr}] By definition, we have that $\operatorname{supp}(\mu)$ is a subset of the $\operatorname{rmax(\mu)}^{\mrm{th}}$ roots of unity, and so we find  $\lvert \operatorname{supp}(\mu) \rvert \leq  \operatorname{rmax}(\mu)$. On the other hand, Proposition \ref{prop.uniformorbit} implies that $\operatorname{supp}(\mu) = \mathcal{O}(\omega)$ for some root of unity $\omega$. Writing $r \coloneqq \operatorname{rmax}(\mu)$, we may assume that $\omega = e^{2\pi i m/r}$ for some $m \in \{0,\ldots,r-1\}$ with $\operatorname{gcd}(m,r) =1$.\\ 
\\
Now, writing $n \coloneqq \lvert \operatorname{supp}(\mu)\rvert$, there exist $j,k \in \mathbb{N}$ with $0 < jk \leq n$ such that $\omega^{2^j3^k} = \omega$. This implies $e^{2\pi i 2^j3^k m/r} = e^{2\pi i m/r}$ and so $e^{2\pi i (2^j3^k-1) m/r} =1$. Thus find $(2^j3^k-1) m/r \in \mathbb{Z}$ or equivalently $2^j3^k m \equiv m \,\,(\operatorname{mod}{r})$. Since we have assumed $\operatorname{gcd}(m,r) =1$, we can can divide both sides of this congruence by $m$ to find $2^j3^k \equiv 1 \,\,(\operatorname{mod}{r})$. This implies in particular that $r \leq 2^j3^k$, and since $jk \leq n$ we find $r \leq 6^n$. This completes the proof of Proposition \ref{prop.maxr}. \end{proof}

\subsection{Convexity results}

\subsubsection{Exchange of convex combinations}

The following is an immediate consequence of the definition of the Herglotz correspondence.

\begin{proposition} \label{prop.convexity} \begin{description} \item[(Clause I)] For all $\mu,\nu \in \prob(\mathbb{T})$ and all $t \in [0,1]$ we have $\psi^{t\mu + (1-t)\nu} = t \psi^\mu + (1-t)\psi^\nu$.
\item[(Clause II)] For all $\psi,\phi \in \operatorname{Cara}(\mathbb{D})$ and all $t \in [0,1]$ we have $\mu_{t\psi+(1-t)\phi} = t \mu_\psi + (1-t)\mu_\phi$. \end{description} \end{proposition}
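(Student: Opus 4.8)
The plan is to derive both clauses directly from the integral formulas (\ref{eq.psimu-def}) and (\ref{eq.mupsi-def}) defining the Herglotz correspondence, using nothing beyond linearity of integration; the statement is indeed immediate, and the write-up should mainly record the two elementary ``sanity checks'' that make the left-hand sides meaningful.

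For Clause I, I would first note that $t\mu + (1-t)\nu$ is again a Borel probability measure on $\mathbb{T}$, so that $\psi^{t\mu+(1-t)\nu}$ is well defined by (\ref{eq.psimu-def}). Then, fixing $z \in \mathbb{D}$ and using that $e^{i\theta} \mapsto \mathcal{H}(e^{i\theta},z)$ is a bounded measurable function on $\mathbb{T}$, linearity of the integral in the measure gives
\[ \psi^{t\mu+(1-t)\nu}(z) = \int_{\bbt} \mathcal{H}(e^{i\theta},z)\,\deee(t\mu+(1-t)\nu)(e^{i\theta}) = t\,\psi^\mu(z) + (1-t)\,\psi^\nu(z), \]
and since $z \in \mathbb{D}$ was arbitrary this is Clause I.

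For Clause II, I would first check that $t\psi+(1-t)\phi \in \operatorname{Cara}(\mathbb{D})$: it is holomorphic, its real part is a convex combination of two strictly positive functions, hence strictly positive, and its value at $0$ is $t\cdot 1 + (1-t)\cdot 1 = 1$; so $\mu_{t\psi+(1-t)\phi}$ is well defined. Next, for each $r\in(0,1)$, the pointwise identity $\re\bigl((t\psi+(1-t)\phi)(re^{i\theta})\bigr) = t\,\re(\psi(re^{i\theta})) + (1-t)\,\re(\phi(re^{i\theta}))$ together with the definition (\ref{eq.mupsi-def}) shows $\mu_{t\psi+(1-t)\phi,r} = t\,\mu_{\psi,r} + (1-t)\,\mu_{\phi,r}$ as measures on $\mathbb{T}$. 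Finally I would pass to the vague limit $r\uparrow 1$: since for each continuous $f:\mathbb{T}\to\mathbb{C}$ the functional $\nu\mapsto\int_{\bbt} f\,\deee\nu$ is linear, the vague limit of a finite linear combination of measures is the corresponding linear combination of their vague limits, giving $\mu_{t\psi+(1-t)\phi} = t\,\mu_\psi + (1-t)\,\mu_\phi$.

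I do not anticipate any genuine obstacle; the only steps meriting an explicit sentence are the membership checks $t\mu+(1-t)\nu\in\prob(\mathbb{T})$ and $t\psi+(1-t)\phi\in\operatorname{Cara}(\mathbb{D})$ and the remark that the vague limit defining $\mu_\psi$ commutes with finite linear combinations. As an alternative route I could invoke the Fourier--Taylor correspondence (Proposition \ref{prop.fou-tay}) together with the injectivity statement in Corollary \ref{cor.bij}: the Fourier coefficients of $t\mu+(1-t)\nu$ are the corresponding convex combinations of those of $\mu$ and $\nu$, hence match the Taylor coefficients of $t\psi^\mu+(1-t)\psi^\nu$ under the dictionary of Proposition \ref{prop.fou-tay}, and one concludes by uniqueness; but the direct computation above is the shorter and more transparent argument.
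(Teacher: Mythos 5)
Your proof is correct and matches the paper's intent exactly: the paper dismisses Proposition \ref{prop.convexity} as ``an immediate consequence of the definition of the Herglotz correspondence'' without writing out an argument, and your write-up simply records the linearity-of-integration computation for Clause I and the $r$-level linearity plus vague-limit passage for Clause II that the paper regards as immediate. The alternative route you sketch via Proposition \ref{prop.fou-tay} and Corollary \ref{cor.bij} is also valid, but the direct argument is the natural one.
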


The following is an immediate consequence of Proposition \ref{prop.convexity}.

\begin{corollary} \label{cor.ergext} \begin{description} \item[(Clause I)] Let $\mu \in \prob(\mathbb{T}; 2,3)$. Then $\psi^\mu$ is extreme if and only if $\mu$ is ergodic. 
\item[(Clause II)] Let $\psi \in \operatorname{Cara}(\mathbb{D}; 2,3)$. Then $\psi$ is extreme if and only if $\mu_\psi$ is ergodic. \end{description}\end{corollary}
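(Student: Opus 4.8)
The plan is to combine three facts that have already been established in this section: the maps $[\mu \mapsto \psi^\mu]$ and $[\psi \mapsto \mu_\psi]$ are affine (Proposition \ref{prop.convexity}), they are mutually inverse bijections $\prob(\mathbb{T}) \leftrightarrow \operatorname{Cara}(\mathbb{D})$ (Corollary \ref{cor.bij}), and they carry $\times(2,3)$-invariant measures to $\times(2,3)$-circular functions and back (Corollary \ref{cor.coeffs}). Taken together, these say that $[\mu \mapsto \psi^\mu]$ restricts to an affine bijection between the convex sets $\prob(\mathbb{T};2,3)$ and $\operatorname{Cara}(\mathbb{D};2,3)$, with affine inverse $[\psi \mapsto \mu_\psi]$. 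Since ``ergodic'' and ``extreme'' are precisely the conditions of being an extreme point of these two convex sets, and extreme points are preserved in both directions by an affine bijection, the corollary follows.

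Concretely, for Clause I I would argue as follows. Suppose $\mu \in \prob(\mathbb{T};2,3)$ is ergodic and write $\psi^\mu = t\alpha + (1-t)\beta$ with $\alpha,\beta \in \operatorname{Cara}(\mathbb{D};2,3)$ and $t \in [0,1]$. Applying the affine map $[\psi \mapsto \mu_\psi]$ and invoking Clause II of Proposition \ref{prop.convexity} gives $\mu = t\,\mu_\alpha + (1-t)\,\mu_\beta$; by Clause II of Corollary \ref{cor.coeffs} the measures $\mu_\alpha$ and $\mu_\beta$ lie in $\prob(\mathbb{T};2,3)$, so ergodicity of $\mu$ forces $t \in \{0,1\}$, showing $\psi^\mu$ is extreme. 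The reverse implication is symmetric: push a decomposition $\mu = t\kappa + (1-t)\nu$ forward through $[\mu \mapsto \psi^\mu]$ and apply Clause I of Proposition \ref{prop.convexity} together with Clause I of Corollary \ref{cor.coeffs}. Clause II then follows by applying Clause I to the measure $\mu_\psi$ and using $\psi^{\mu_\psi} = \psi$ from Corollary \ref{cor.bij}.

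There is no genuine obstacle here; the only point requiring a moment's care is that in each decomposition the two pieces must remain inside the relevant invariant (resp.\ circular) subclass, so that extremality is being tested against the correct convex set — and this is exactly what Corollary \ref{cor.coeffs} supplies.
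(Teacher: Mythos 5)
Your proposal is correct and matches the paper's intent exactly: the paper dismisses this as an ``immediate consequence of Proposition \ref{prop.convexity},'' and your argument simply fills in the routine details, correctly noting that one also needs Corollary \ref{cor.bij} (the maps are mutually inverse) and Corollary \ref{cor.coeffs} (they respect the invariant/circular subclasses) so that the decompositions stay inside the right convex sets. This is the same route, just written out.
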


\subsubsection{Extreme rational \cara functions}

\begin{proposition} \label{prop.singular} Suppose that $\psi \in \operatorname{Cara}(\mathbb{D};2,3)$ is extreme and nonconstant. Then $\lambda$ and $\mu_\psi$ are mutually singular elements of $\prob(\mathbb{T})$. \end{proposition}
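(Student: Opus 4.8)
The plan is to push the statement over to the measure side via the Herglotz correspondence and then run a Lebesgue-decomposition argument. First I would set $\mu \coloneqq \mu_\psi \in \prob(\mathbb{T})$; by Corollary \ref{cor.coeffs} it is $\times(2,3)$-invariant, and by Corollary \ref{cor.ergext} it is ergodic. Moreover $\mu \neq \lambda$: if $\mu_\psi = \lambda$, then Corollary \ref{cor.bij} gives $\psi = \psi^{\mu_\psi} = \psi^\lambda$, which is the constant function $1$ by the computation in Section \ref{sec.special}, contradicting the hypothesis that $\psi$ is nonconstant. So it suffices to show that an ergodic $\times(2,3)$-invariant measure $\mu \neq \lambda$ is mutually singular with $\lambda$.

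Next, take the Lebesgue decomposition $\mu = \mu_{\mathrm{ac}} + \mu_{\mathrm{s}}$ relative to $\lambda$. The crux is to show both summands are $\times(2,3)$-invariant. Write $T_n\colon\mathbb{T}\to\mathbb{T}$, $T_n(\omega)=\omega^n$, so that (by the Riesz representation theorem together with Definition \ref{invdef}) $\times n$-invariance of a finite measure $\nu$ is exactly the identity $(T_n)_*\nu=\nu$. Since $\lambda$ is $\times n$-invariant we have $\lambda(T_n^{-1}A)=\lambda(A)$ for every Borel $A$, so $\lambda(A)=0$ implies $\mu_{\mathrm{ac}}(T_n^{-1}A)=0$; hence $(T_n)_*\mu_{\mathrm{ac}}\ll\lambda$. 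On the other hand $T_n$ is a Lipschitz map of $\mathbb{T}$, so it carries $\lambda$-null sets to $\lambda$-null sets, and therefore $(T_n)_*\mu_{\mathrm{s}}\perp\lambda$. Consequently $(T_n)_*\mu=(T_n)_*\mu_{\mathrm{ac}}+(T_n)_*\mu_{\mathrm{s}}$ is the Lebesgue decomposition of $(T_n)_*\mu=\mu$, and uniqueness of the Lebesgue decomposition forces $(T_n)_*\mu_{\mathrm{ac}}=\mu_{\mathrm{ac}}$ and $(T_n)_*\mu_{\mathrm{s}}=\mu_{\mathrm{s}}$ for $n\in\{2,3\}$.

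With this in hand, set $t\coloneqq\mu_{\mathrm{ac}}(\mathbb{T})$. If $0<t<1$, then $\mu = t\,(t^{-1}\mu_{\mathrm{ac}}) + (1-t)\,((1-t)^{-1}\mu_{\mathrm{s}})$ is a nontrivial convex combination of two elements of $\prob(\mathbb{T};2,3)$, contradicting ergodicity; so $t\in\{0,1\}$. If $t=0$ then $\mu=\mu_{\mathrm{s}}\perp\lambda$ and we are done. If $t=1$, then $\mu\ll\lambda$ with density $f = \dee\mu/\dee\lambda \in L^1(\mathbb{T})$, so $\hat{\mu}(\ell)$ is the $\ell^{\mathrm{th}}$ Fourier coefficient of $f$ and $\hat{\mu}(\ell)\to 0$ as $|\ell|\to\infty$ by the Riemann--Lebesgue lemma; but Proposition \ref{coeffs} gives $\hat{\mu}(\ell)=\hat{\mu}(2\ell)=\hat{\mu}(3\ell)$, hence $\hat{\mu}(\ell)=\hat{\mu}(2^{a}3^{b}\ell)$ for all $a,b\in\mathbb{N}$, and letting $a+b\to\infty$ for a fixed $\ell\neq 0$ yields $\hat{\mu}(\ell)=0$; since $\hat{\mu}(0)=1$, these are the Fourier coefficients of $\lambda$, forcing $\mu=\lambda$, a contradiction. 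This rules out $t=1$ and completes the proof.

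The step I expect to be the main obstacle is the second paragraph: because $T_2$ and $T_3$ are not injective, $\times(2,3)$-invariance does not split coordinatewise across the Lebesgue decomposition, and one has to argue instead through the action of $T_n$ on $\lambda$-null sets together with uniqueness of the Lebesgue decomposition. The remaining ingredients --- the transfer through the Herglotz correspondence and the Fourier-coefficient argument ruling out $\mu\ll\lambda$ --- are routine.
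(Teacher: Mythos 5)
Your proof is correct, and it takes a genuinely different route from the paper's. The paper's proof is short and conceptual: since $\lambda$ and $\mu_\psi$ are distinct ergodic elements of $\prob(\mathbb{T};2,3)$, it invokes the pointwise ergodic theorem for $\mathbb{N}^2$-actions to produce disjoint full-measure sets of generic points, giving mutual singularity as an instance of the general principle that distinct ergodic measures for the same action are mutually singular. You instead run the Lebesgue decomposition $\mu_\psi = \mu_{\mathrm{ac}} + \mu_{\mathrm{s}}$, show that each piece is separately $\times(2,3)$-invariant (the key observation being that $T_n$ both pulls back and pushes forward $\lambda$-null sets to $\lambda$-null sets, so the decomposition of $(T_n)_*\mu_\psi = \mu_\psi$ matches the original by uniqueness), use ergodicity to force $\mu_{\mathrm{ac}}(\mathbb{T}) \in \{0,1\}$, and finally rule out $\mu_\psi \ll \lambda$ by combining Proposition \ref{coeffs} with the Riemann--Lebesgue lemma. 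The tradeoff: the paper's argument is shorter and more general (it gives mutual singularity for \emph{any} two distinct ergodic measures) but leans on the multidimensional ergodic theorem; yours is longer but fully elementary, replacing that tool with the uniqueness of the Lebesgue decomposition and Riemann--Lebesgue, and it explicitly exploits $\lambda$ being Lebesgue. Your closing remark correctly identifies the subtle step --- the splitting of the decomposition under a noninvertible map --- and you handle it properly by arguing through null sets rather than attempting a naive coordinatewise splitting.
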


\begin{proof}[Proof of Proposition \ref{prop.singular}] Since $\psi$ is nonconstant, we have $\lambda \neq \mu_\psi$. Hence there exists a continuous function $f:\mathbb{T} \to \mathbb{R}$ such that:
\begin{align*} \int_\mathbb{T} f(\omega) \deee \lambda(\omega) \neq \int_{\mathbb{T}} f(\omega) \deee \mu_\psi(\omega) \end{align*}
We will now use the pointwise ergodic theorem for actions of the semigroup $\mathbb{N}^2$. This appears, for example, as Theorem 1.1 in \cite{MR797411}. Since $\lambda$ is ergodic, there exists a Borel subset $L$ of $\mathbb{T}$ with $\lambda(L) = 1$ such that the following holds for all $\omega \in L$.
\begin{align*} \lim_{n \to \infty} \frac{1}{n} \sum_{j,k = 0}^{n-1} f(2^j3^k\omega) =  \int_\mathbb{T} f(\omega) \deee \lambda(\omega)  \end{align*}
Similarly, Clause II in Corollary \ref{cor.ergext} implies that $\mu_\psi$ is ergodic and so there exists a Borel subset $M$ of $\mathbb{T}$ with $\mu_\psi(M) = 1$ such that the following holds for all $\omega \in M$.
\begin{align*} \lim_{n \to \infty} \frac{1}{n} \sum_{j,k = 0}^{n-1} f(2^j3^k\omega) =  \int_{\mathbb{T}} f(\omega) \deee \mu_\psi(\omega)   \end{align*}
Combining the three previous displays, we find that $L$ and $M$ must be disjoint. Thus $\lambda$ and $\mu_\psi$ are mutually singular, as required. \end{proof}

\begin{proposition} \label{prop.wellroot} Suppose that $\psi \in \operatorname{Cara}(\mathbb{D};2,3)$ is rational, nonconstant and extreme. Then there exists a root of unity $\omega \in \mathbb{T}$ such that the following holds for all $z \in \mathbb{D}$. \begin{align} \psi(z) = \frac{1}{|\mathcal{O}(\omega)|} \sum_{\vartheta \in \mathcal{O}(\omega)} \mathcal{H}(\vartheta, z) \label{eq.niceform} \end{align}\end{proposition}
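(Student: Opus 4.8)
The plan is to transfer the problem to the measure side via the Herglotz correspondence and use the finitary results already established. Since $\psi$ is nonconstant, rational, and extreme in $\operatorname{Cara}(\mathbb{D};2,3)$, Corollary \ref{cor.coeffs} gives that $\mu_\psi \in \prob(\mathbb{T};2,3)$, and Clause II of Corollary \ref{cor.ergext} gives that $\mu_\psi$ is ergodic. The key step will be to show that the rationality of $\psi$ forces $\mu_\psi$ to be finitely supported. Once we know this, Proposition \ref{prop.uniformorbit} applies: there is a root of unity $\omega$ with $\mu_\psi = \frac{1}{|\mathcal{O}(\omega)|}\sum_{\vartheta \in \mathcal{O}(\omega)} \boldsymbol{\delta}_\vartheta$. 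Applying $\mu \mapsto \psi^\mu$ and using Clause I of Proposition \ref{prop.convexity} together with the computation in Section \ref{sec.special} that $\psi^{\boldsymbol{\delta}_\omega} = \mathcal{H}(\omega,\cdot)$, we get exactly \eqref{eq.niceform}, because $\psi^{\mu_\psi} = \psi$ by Corollary \ref{cor.bij}.

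So the heart of the matter is: if $\psi \in \operatorname{Cara}(\mathbb{D})$ is rational, then $\mu_\psi$ is a finite sum of point masses. I would argue as follows. A rational Carath\'eodory function $\psi$ extends meromorphically past $\mathbb{T}$, and since $\re(\psi) > 0$ on $\mathbb{D}$, by the minimum principle for harmonic functions the only possible poles on $\overline{\mathbb{D}}$ lie on $\mathbb{T}$ itself (there are none in $\mathbb{D}$). Now recall that $\mu_\psi$ is recovered as the weak-$\ast$ limit of $\re(\psi(re^{i\theta}))\,d\lambda$. Near a point $e^{i\theta_0} \in \mathbb{T}$ that is not a pole, $\re(\psi(re^{i\theta}))$ stays bounded as $r \uparrow 1$, so $\mu_\psi$ restricted to a neighborhood of such a point is absolutely continuous with a bounded real-analytic density; but combined with $\times(2,3)$-invariance of $\mu_\psi$ this absolutely continuous part must be a multiple of $\lambda$, which is incompatible with extremality and nonconstancy (Proposition \ref{prop.singular} shows $\mu_\psi \perp \lambda$), hence the a.c.\ part is zero. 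Thus $\mu_\psi$ is supported on the finite pole set of $\psi$ on $\mathbb{T}$; and at a simple pole $e^{i\theta_0}$ one checks directly from the partial-fraction decomposition of $\psi$ that $\mathcal{H}(e^{i\theta_0},z)$ (up to positive scalar) is the only way a rational Carath\'eodory function can have a pole there, producing an atom. Alternatively, and more cleanly: a rational $\psi$ with positive real part on $\mathbb{D}$ has a partial-fraction expansion whose only poles are simple and on $\mathbb{T}$ with residues forcing each term to be proportional to a Herglotz kernel $\mathcal{H}(\omega_j,\cdot)$; matching $\psi(0)=1$ shows $\psi = \sum_j c_j \mathcal{H}(\omega_j,\cdot)$ with $c_j > 0$, $\sum c_j = 1$, whence $\mu_\psi = \sum_j c_j \boldsymbol{\delta}_{\omega_j}$ is finitely supported.

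I would then feed this finite support into Proposition \ref{prop.uniformorbit} to conclude. The main obstacle is the rationality-to-finite-support implication: one must rule out that a rational $\psi$ contributes any nontrivial absolutely continuous or singular-continuous part to $\mu_\psi$, and the cleanest route is the partial-fraction argument (every pole on $\mathbb{T}$ is simple because a higher-order pole would make $\re\psi$ change sign nearby, violating positivity), which identifies $\mu_\psi$ directly as a convex combination of point masses before any invariance is even used. The invariance and extremality hypotheses then only serve to invoke Proposition \ref{prop.uniformorbit} and pin down that the support is a single orbit $\mathcal{O}(\omega)$ with uniform weights.
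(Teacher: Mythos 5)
Your first route is essentially the paper's argument, with a small simplification. The paper invokes Proposition \ref{prop.singular} to get $\mu_\psi \perp \lambda$ and then cites a boundary-behavior theorem from Simon to conclude that $\mu_\psi$ gives full mass to the set where $\psi$ blows up, which for rational $\psi$ is the finite pole set on $\mathbb{T}$. Your version --- observe that away from the finitely many boundary poles $\re\psi$ extends continuously, so $\mu_\psi$ is absolutely continuous there with density $\re\psi(e^{i\theta})$, and then singularity forces that part to vanish --- reaches the same conclusion by a more elementary direct argument. (The detour through $\times(2,3)$-invariance is superfluous; $\mu_\psi \perp \lambda$ from Proposition \ref{prop.singular} alone kills the absolutely continuous part.) From there, Proposition \ref{prop.uniformorbit} and Section \ref{sec.special} finish the proof exactly as you say.

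Your second route, however, the one you call ``more cleanly,'' has a genuine gap. It is \emph{not} true that a rational Carath\'eodory function is automatically a convex combination of Herglotz kernels. A rational $\psi$ with $\re\psi > 0$ on $\mathbb{D}$ and $\psi(0)=1$ can have its poles strictly outside $\overline{\mathbb{D}}$, in which case its representing measure is purely absolutely continuous. For instance,
\begin{align*}
\psi(z) = \frac{2-z}{2+z}, \qquad
\re\psi(re^{i\theta}) = \frac{4 - r^2}{\,\lvert 2 + r e^{i\theta}\rvert^2\,} > 0, \qquad \psi(0)=1,
\end{align*}
is rational and Carath\'eodory, its only pole is at $z=-2$, and $\deee\mu_\psi = \frac{3}{5+4\cos\theta}\deee\lambda$ is absolutely continuous with no atoms. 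Your claim that the partial-fraction decomposition of a rational Carath\'eodory function has ``only poles \ldots\ on $\mathbb{T}$'' and that this ``identifies $\mu_\psi$ directly as a convex combination of point masses before any invariance is even used'' is therefore false. The finiteness of $\operatorname{supp}(\mu_\psi)$ really does require the extremality and nonconstancy hypotheses, through Proposition \ref{prop.singular}; rationality alone does not suffice. Drop the second route and keep the first.
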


\begin{proof} Letting $\psi$ be as in the statement of Proposition \ref{prop.wellroot}, we first show that $\mu_\psi$ is finitely supported. Since $\psi$ is nonconstant and extreme, Proposition \ref{prop.singular} implies that $\mu_\psi$ is mutually singular with $\lambda$. Therefore Item (iv) in Section 3 of Chapter 1.3 on page 29 of \cite{MR2105088} implies the following.
\begin{align} \mu_\psi\!\left(\!\left\{e^{i\theta} \in \mathbb{T}: \lim_{r \uparrow 1} \psi_\mu(re^{i\theta}) = \infty \right\}\!\right) = 1\end{align}
Since $\psi$ is rational, the above display implies that $\mu_\psi$ is finitely supported. Passing Proposition \ref{prop.uniformorbit} back through the correspondence completes the proof of Proposition \ref{prop.wellroot}.\end{proof}

\subsection{Main argument for equivalence of ergodic and complex-analytic conjectures}

\paragraph{Deduction of Conjecture \ref{conj.erg-1} from Conjecture \ref{conj.comp-1}} Assume Conjecture \ref{conj.comp-1} and let $\mu \in \prob(\mathbb{T}; 2,3)$ be ergodic. According to Clause I in Corollary \ref{cor.coeffs}, $\psi^\mu$ is an element of $\operatorname{Cara}(\mathbb{D}; 2,3)$, and according to Corollary \ref{cor.ergext} we have that $\psi^\mu$ is extreme. Thus Conjecture \ref{conj.comp-1} implies that $\psi^\mu$ is rational. If $\psi^\mu$ is constant, then the discussion in Section \ref{sec.special} implies that $\mu = \lambda$. On other hand, if $\psi^\mu$ is nonconstant then it has the form appearing in (\ref{eq.niceform}), and the discussion in Section \ref{sec.special} implies that $\mu$ is the uniform measure in $\mathcal{O}(\omega)$ for some root of unity $\omega$. Thus we find that Conjecture \ref{conj.comp-1} implies Conjecture \ref{conj.erg-1}, as required.

\paragraph{Deduction of Conjecture \ref{conj.comp-1} from Conjecture \ref{conj.erg-1}} Assume Conjecture \ref{conj.erg-1}, and let $\psi \in \operatorname{Cara}(\mathbb{D};2,3)$. According to Clause II in Proposition \ref{coeffs}, $\mu_\psi$ is an element of $\prob(\mathbb{T};2,3)$ and according to Corollary \ref{cor.ergext} we have that $\mu_\psi$ is ergodic. If $\mu_\psi$ is infinitely supported, then Conjecture \ref{conj.erg-1} implies that $\mu_\psi = \lambda$, and so the discussion in Section \ref{sec.special} implies that $\psi$ is the constant function $1$. On the other hand, if $\mu$ is finitely supported then Proposition \ref{prop.uniformorbit} implies that $\mu$ is the uniform measure on $\mathcal{O}(\omega)$ for some root of unity $\omega$. Thus the discussion in Section \ref{sec.special} implies that $\psi$ is a finite sum of Herglotz kernels. In either case we find that $\psi$ is rational, as required.

\paragraph{Deduction of Conjecture \ref{conj.erg-2} from Conjecture \ref{conj.comp-2}} Assume Conjecture \ref{conj.comp-2} and let $\mu \in \prob(\mathbb{T};2,3)$ be arbitrary. By hypothesis we can find a sequence $(\psi_n)_{n \in \mathbb{N}}$ of rational elements of $\prob(\mathbb{T};2,3)$ which converges to $\psi^\mu$ in the compact-uniform topology. We now distinguish two cases.

\begin{itemize} \item In this case we assume that $\psi_n$ is constant for infinitely values of $n$. Then $\psi$ must also be constant, and so we see that $\mu = \lambda$. We can realize $\lambda$ as the vague limit of the uniform measures on the $p_n^{\mrm{th}}$ roots of unity, where $p_n$ is the $(n+2)^{\mrm{th}}$ prime number. Each of these measures is $\times (2,3)$-invariant, and so we see that $\mu$ is indeed a vague limit of finitely supported measures.
\item In this case we assume that $\psi_n$ is nonconstant for all but finitely values of $n$. Decomposing $\psi_n$ into a finite sum of extreme rational elements of $\operatorname{Cara}(\mathbb{D};2,3)$, we can use Proposition \ref{prop.wellroot} to see that $\mu_{\psi_n} \in \prob(\mathbb{T};2,3)$ is finitely supported for all $n \in \mathbb{N}$. Since $(\psi_n)_{n \in \mathbb{N}}$ was assumed to converge to $\psi_\mu$ in the compact-uniform topology, Proposition \ref{prop.homeo} implies that $(\mu_{\psi_n})_{n \in \mathbb{N}}$ converges to $\mu$, as required. \end{itemize}

\paragraph{Deduction of Conjecture \ref{conj.comp-2} from Conjecture \ref{conj.erg-2}} Assume Conjecture \ref{conj.erg-2} and let $\psi \in \operatorname{Cara}(\mathbb{D};2,3)$. By hypothesis we can find a sequence $(\mu_n)_{n \in \mathbb{N}}$ of finitely supported elements of $\prob(\mathbb{T};2,3)$ which converges vaguely to $\mu_\psi$. Then Proposition \ref{prop.homeo} implies that the sequence $(\psi^{\mu_n})_{n \in \mathbb{N}}$ of elements of $\operatorname{Cara}(\mathbb{D};2,3)$ converges in the compact-uniform topology to $\psi$ and the discussion in Section \ref{sec.special} implies that every element of this sequence os rational, as required.

\paragraph{Deduction of Conjecture \ref{conj.erg-3} from Conjecture \ref{conj.comp-3}} Assuming Conjecture \ref{conj.comp-3} and let $(\mu_n)_{n \in \mathbb{N}}$ be a sequence of ergodic finitely supported elements of $\prob(\mathbb{T};2,3)$ such that $\lim_{n \to \infty} \lvert \operatorname{supp}(\mu_n) \rvert = \infty$. Then Proposition \ref{prop.wellroot} implies that $\psi^{\mu_n}$ is an extreme rational element of $\operatorname{Cara}(\mathbb{D};2,3)$ for all $n \in \mathbb{N}$, and the number of poles of $\psi^{\mu_n}$ is equal to the number of points in the support of $\mu_n$. Thus Conjecture \ref{conj.comp-3} implies that $\psi^{\mu_n}$ converges compact-uniformly to the constant function $1$ as $n \to \infty$, and so Proposition \ref{prop.homeo} implies that $\mu_n$ converges vaguely to $\lambda$, as required.

\paragraph{Deduction of Conjecture \ref{conj.comp-3} from Conjecture \ref{conj.erg-3}} Let $(\psi_n)_{n \in \mathbb{N}}$ be a sequence of extreme rational elements of $\operatorname{Cara}(\mathbb{D};2,3)$ and assume that the number of poles of $\psi_n$ diverges to infinity. Then Proposition \ref{prop.wellroot} implies that the element $\mu_{\psi_n}$ of $\prob(\mathbb{T};2,3)$ is a finitely supported measure, and that $\lvert \operatorname{supp}(\mu_{\psi_n}) \rvert$ is equal to the number of poles of $\psi_n$. Thus Conjecture \ref{conj.erg-3} implies that $\mu_{\psi_n}$ converges vaguely to $\lambda$ as $n \to \infty$, and so Proposition \ref{prop.homeo} implies that $\psi_n$ converges compact-uniformly to the constant function $1$, as required.

\section{Measure-tracial state correspondence} \label{sec.tracial}

In this section we establish a correspondence between elements of $\operatorname{Prob}(\mathbb{T};2,3)$ and elements of $\operatorname{TSt}(\mathfrak{F}_{2,3})$. Unlike the correspondence described in Section \ref{sec.meas-fun}, this will not be a bijective correspondence. (See Remark \ref{rem.partinv} below.) However, it will preserve enough information to allow us to show the equivalence of Conjectures \ref{conj.erg-1} and \ref{conj.see-1}, the equivalence of Conjectures \ref{conj.erg-2} and \ref{conj.see-2}, and the equivalence of Conjectures \ref{conj.erg-3} and \ref{conj.see-3}.\\
\\
Note that if $\mu$ were a $\times (n_1,\ldots,n_m)$-invariant measure for some $n_1,\ldots,n_m \in \mathbb{N}$, we could obtain a tracial state on the full group $\Cstar$-algebra of $ \bbz^m \ltimes \bbz[1/(n_1 \cdots n_m)]$ by following the same method we describe.
 
\subsection{Introducing the tracial correspondence}

\subsubsection{Generalities on the GNS construction} \label{subsec.char}

In the present Subsection \ref{subsec.char}, we exposit a standard method for constructing positive linear functionals on group $C^\ast$-algebras. If $G$ is a group we will write $\iota_G$ for the identity element of $G$.

\begin{definition} \label{def.character}
    Let $G$ be a countable group and $\phi$ a complex-valued function on $G$. 
    \begin{itemize}
       \item We define $\phi$ to be \emph{positive definite} if for every finite subset $F$ of $G$ and every function $c: F \rightarrow \mathbb{C}$ we have:
       \begin{align*} \sum_{g,h \in F} c(g) \phi(h^{-1}g)\overline{c(h)} \geq 0 \end{align*}
       \item We define $\chi$ to be a \emph{character} if it is positive definite and invariant under conjugation in the sense that $\phi(h^{-1}gh) = \phi(g)$ for all $g,h \in G$.
       \item We define $\phi$ to be \emph{normalized} if $\chi(\iota_G) = 1$.
    \end{itemize}
\end{definition}

We also recall the following version of the GNS construction. Here, for $g \in G$ we let $\tau_g$ denote the permutation of $G$ given by the left-translation action $\tau_g(h) = g^{-1}h$. 

\begin{proposition}[Proposition 1.B.8 in \cite{MR4249450}]  \label{prop.posdef} Let $G$ be a countable group and let $\phi$ be a positive definite function on $G$. Then there exists a Hilbert space $\mathcal{H}_\phi$ and a function $\Omega_\phi:G \to \mathcal{H}_\phi$ such that $\langle \Omega_\phi(g),\Omega_\phi(h) \rangle = \phi(h^{-1}g)$ for all $g,h \in G$ and such that the linear span of the set $\{\Omega_\phi(g):g \in G\}$ is dense in $\mathcal{H}_\phi$. Furthermore, for any $g \in G$ there exists a unique unitary `translation' operator $\eusc{T}_\phi(g) \in \mrm{Un}(\mathcal{H}_\phi)$ which which implements the bottom arrow in the following commutative diagram. \[   \begin{tikzcd} G \arrow[ddd,"\Omega_\phi" left]  \arrow[rrr,"\tau_g"]  &&&   G \arrow[ddd,"\Omega_\phi"]  \\ \\ \\  \mathcal{H}_\phi \arrow[rrr,"\eusc{T}_\phi(g)"] &&& \mathcal{H}_\phi \end{tikzcd}  \]

 \end{proposition}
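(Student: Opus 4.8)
The statement to prove is Proposition~\ref{prop.posdef}, the GNS-type construction for positive definite functions on a countable group $G$. The plan is to build the Hilbert space $\mathcal{H}_\phi$ as a quotient-completion of the free vector space on $G$, and then check that left translation descends to a well-defined unitary.

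First I would let $V = \mathbb{C}[G]$ be the free complex vector space with basis $(e_g)_{g \in G}$, and define a sesquilinear form on $V$ by declaring $\langle e_g, e_h \rangle_\phi \coloneqq \phi(h^{-1}g)$ and extending sesquilinearly (conjugate-linear in the second slot, to match the convention in the statement). The positive definiteness hypothesis says exactly that $\langle v, v \rangle_\phi \geq 0$ for every $v = \sum_{g \in F} c(g) e_g$. So $\langle \cdot, \cdot \rangle_\phi$ is a positive semidefinite Hermitian form; the Cauchy--Schwarz inequality then holds, and $N \coloneqq \{v \in V : \langle v, v\rangle_\phi = 0\}$ is a linear subspace which coincides with the radical $\{v : \langle v, w \rangle_\phi = 0 \text{ for all } w\}$. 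I would then set $\mathcal{H}_\phi$ to be the Hilbert space completion of $V/N$ with respect to the induced inner product, and define $\Omega_\phi(g) \coloneqq e_g + N$. By construction $\langle \Omega_\phi(g), \Omega_\phi(h)\rangle = \phi(h^{-1}g)$, and the span of $\{\Omega_\phi(g) : g \in G\}$ is dense because it is the image of $V$, which is dense in the completion.

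Next I would construct the translation operators. For fixed $g \in G$, the linear map $S_g : V \to V$ determined by $S_g(e_h) \coloneqq e_{\tau_g^{-1}(h)} = e_{gh}$ — equivalently, the map that makes the diagram commute with $\tau_g(h) = g^{-1}h$ — satisfies, for basis vectors, $\langle S_g e_h, S_g e_k \rangle_\phi = \langle e_{gh}, e_{gk}\rangle_\phi = \phi((gk)^{-1}(gh)) = \phi(k^{-1}g^{-1}gh) = \phi(k^{-1}h) = \langle e_h, e_k\rangle_\phi$, so $S_g$ is an isometry of $(V, \langle\cdot,\cdot\rangle_\phi)$ onto itself; its inverse is $S_{g^{-1}}$. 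In particular $S_g$ preserves the radical $N$, so it descends to a linear isometry of $V/N$ onto itself, which extends uniquely by continuity to a unitary $\eusc{T}_\phi(g) \in \mathrm{Un}(\mathcal{H}_\phi)$. Tracing through the definitions, $\eusc{T}_\phi(g)\Omega_\phi(h) = \Omega_\phi(gh) = \Omega_\phi(\tau_g^{-1}(h))$, which is precisely the commutativity of the square in the statement once one matches the arrow direction; uniqueness is immediate since the $\Omega_\phi(h)$ span a dense subspace.

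The only real subtlety — and the step I would be most careful about — is the bookkeeping around the radical: verifying that $N$ is genuinely a subspace (not merely a subset), that it equals the radical of the form so that the quotient inner product is actually positive \emph{definite}, and that the isometries $S_g$ preserve it. All of this follows from Cauchy--Schwarz for positive semidefinite Hermitian forms, which should be recalled explicitly since the form is only semidefinite before passing to the quotient. Everything else is routine linear algebra and standard completion arguments, so I would state those briefly and cite the reference (\cite{MR4249450}) for the details rather than reproducing them in full.
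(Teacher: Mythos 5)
The paper does not give its own proof of this proposition; it cites Proposition 1.B.8 of \cite{MR4249450} and uses it as a black box, so there is no in-text argument to compare against. Your write-up reproduces the standard GNS construction for positive definite functions, which is exactly what that reference contains, and the substance — forming the positive semidefinite form on $\mathbb{C}[G]$, invoking Cauchy--Schwarz to identify the null vectors with the radical, quotienting and completing, and extending the isometric translations to unitaries on the completion — is correct and complete.

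One small sign slip is worth fixing. The paper's convention is $\tau_g(h) = g^{-1}h$, so commutativity of the square requires $\eusc{T}_\phi(g)\Omega_\phi(h) = \Omega_\phi(\tau_g(h)) = \Omega_\phi(g^{-1}h)$. You define $S_g(e_h) \coloneqq e_{\tau_g^{-1}(h)} = e_{gh}$ and assert it is ``the map that makes the diagram commute,'' but as written $S_g$ implements $\tau_g^{-1}$, not $\tau_g$; the operator you want is the one induced by $e_h \mapsto e_{g^{-1}h}$ (your $S_{g^{-1}}$). The isometry computation is identical after swapping $g$ for $g^{-1}$, so nothing breaks, but the phrase ``once one matches the arrow direction'' papers over a genuine inversion that should just be stated correctly. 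Uniqueness of the induced unitary via density of $\operatorname{span}\{\Omega_\phi(h)\}$ is fine.
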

 
 Since the map $g \mapsto \tau_g$ is a homomorphism from $G$ into the group of permutations of $G$, it follows that the map $\eusc{T}_\phi: G \to \mrm{Un}(\mathcal{H}_\phi)$ is a unitary representation of $G$. According to the universal property of $C^\ast(G)$, there exists a unique $\ast$-homomorphism $\eusc{A}_\phi:C^\ast(G) \to \mrm{End}(\mathcal{H}_\phi)$ such that the following diagram commutes.
 \[ \begin{tikzcd} & G  \arrow[ddl, "\upsilon_G" above left]  \arrow[ddr, "\eusc{T}_\phi" above right]  & \\  \\ C^\ast(G) \arrow[rr,"\eusc{A}_\phi"] & & \mrm{End}(\mathcal{H})  \end{tikzcd} \] 
 
 Now, assume in addition that the positive definite function $\phi$ is normalized. We define a linear functional $\Phi:C^\ast(G) \to \mathbb{C}$ by setting $\Phi(s) = \langle \eusc{A}_\phi(s)\Omega_\Phi(\iota_G), \Omega_\Phi(\iota_G) \rangle$ for an operator $s \in C^\ast(G)$. The construction guarantees that $\Phi$ is a state on the algebra $C^\ast(G)$ extending $\phi$. More explicitly, this means that $\Phi$ is a linear functional on $C^\ast(G)$ such that $\Phi(s^\ast s) \geq 0$ for all $s \in C^\ast(G)$ and such that the following diagram commutes.  \[  \begin{tikzcd} \arrow[ddr, "\phi" below left] G \arrow[rr, "\upsilon_G" ] && C^\ast (G)  \arrow[ddl,"\Phi", start anchor={[xshift=-0.5ex]}  ] \\ \\ &\mathbb{C} &    \end{tikzcd} \]
 
 It is straightforward to verify that if we further assume $\phi$ is a character of $G$, then the state $\Phi$ is tracial.

\begin{remark} \label{rem.stchar} The preceding discussion in Section \ref{subsec.char} implies that there is a one-to-one correspondence between tracial states on $C^\ast(G)$ and characters on $G$. If a tracial state $\Phi \in \operatorname{TSt}(C^\ast(G))$ is given, we will sometimes abuse notation by conflating $\Phi$ with its associated character on $G$. Thus, for example, we may write $\mathcal{H}_\Phi$ for the underlying Hilbert space of the GNS representation of $C^\ast(G)$ associated with $\Phi$. We will further abuse notation by identifying $G$ with a subset of $C^\ast(G)$ via the map $\upsilon_G$. Thus, for example, if $g \in G$ we may write $\Phi(g)$ to denote the complex number $\Phi(\upsilon_G(g))$.   \end{remark}

\subsubsection{Characters on $\mathfrak{F}_{2,3}$ and invariant measures} \label{subsec.extend}

Now, fix $\mu \in \prob(\mathbb{T};2,3)$, in order to define a state $\Phi^\mu \in \operatorname{TSt}(\mathfrak{F}_{2,3})$. First, consider the function  $\phi^\mu : \bbz \rightarrow \mathbb{C}$ given by $\phi(\ell) \coloneqq \hat{\mu}(\ell)$. By Proposition \ref{coeffs}, $\phi^\mu$ has the invariance property $\phi^\mu(\ell)=\phi^\mu(2\ell)=\phi^\mu(3\ell)$ for all $\ell \in \bbz$. Moreover, by the Herglotz-Bochner theorem, $\phi$ is positive definite. We extend $\phi^\mu$ to a function $\mathcal{E}^\mu:\bbz[1/6] \to \mathbb{C}$ by setting $\mathcal{E}^\mu(m6^{-n}) = \phi(m)$ for $m \in \mathbb{Z}$ and $n \in \mathbb{N}$. The function $\mathcal{E}^\mu$ is well-defined because of the invariance property of $\phi^\mu$. We further extend $\mathcal{E}^\mu$ to a function $\chi^\mu:\fgroup \to \mathbb{C}$ as follows. 
\begin{align*} \chi^\mu (j,k;p) = \begin{cases} \mathcal{E}^\mu(p) & \textrm{ if } j=k=0 \\ 0 & \textrm{else}\end{cases} \end{align*}

We now show that the above construction is compatible with Definition \ref{def.character}.

\begin{proposition} \label{prop.character}
For all $\mu \in \prob(\mathbb{T};2,3)$ we have that $\chi^\mu$ is a character on $\mathfrak{F}_{2,3}$.
\end{proposition}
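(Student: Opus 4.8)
The plan is to verify the two defining properties of a character from Definition \ref{def.character} for the function $\chi^\mu$: invariance under conjugation and positive definiteness. (Normalization, $\chi^\mu(\iota_{\mathfrak{F}_{2,3}}) = \mathcal{E}^\mu(0) = \phi^\mu(0) = \hat\mu(0) = 1$, is immediate and worth recording since we ultimately want a tracial state.)

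First, conjugation invariance. I would start from the explicit group law and record the formulas $h^{-1} = (-\ell,-m;\, -2^{-\ell}3^{-m}q)$ and, after two applications of $\star$, $h^{-1} \star g \star h = (j,k;\, 2^{-\ell}3^{-m}(p-q) + 2^{j-\ell}3^{k-m}q)$ for $g = (j,k;p)$ and $h = (\ell,m;q)$. Since the projection $\mathfrak{F}_{2,3} \to \mathbb{Z}^2$ is a homomorphism onto an abelian group, conjugation does not change the $\mathbb{Z}^2$-component, so when $(j,k) \neq (0,0)$ both $\chi^\mu(h^{-1}gh)$ and $\chi^\mu(g)$ vanish; when $j=k=0$ the formula collapses to $h^{-1}\star g \star h = (0,0;\, 2^{-\ell}3^{-m}p)$, and it remains to check $\mathcal{E}^\mu(2^{-\ell}3^{-m}p) = \mathcal{E}^\mu(p)$. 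The key observation is that the invariance $\phi^\mu(\ell)=\phi^\mu(2\ell)=\phi^\mu(3\ell)$ supplied by Proposition \ref{coeffs} upgrades, on the extension $\mathcal{E}^\mu$, to $\mathcal{E}^\mu(2x) = \mathcal{E}^\mu(3x) = \mathcal{E}^\mu(x/2) = \mathcal{E}^\mu(x/3) = \mathcal{E}^\mu(x)$ for all $x \in \mathbb{Z}[1/6]$: writing $x = m6^{-n}$ one has $x/2 = (3m)6^{-(n+1)}$ and $x/3 = (2m)6^{-(n+1)}$, so these values all equal $\phi^\mu(m)$ up to the built-in $\times 2, \times 3$ invariance. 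Iterating, $\mathcal{E}^\mu$ is invariant under multiplication by every $2^a 3^b$ with $a,b \in \mathbb{Z}$, and in particular by $2^{-\ell}3^{-m}$.

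Second, positive definiteness, which carries the actual content. I would reduce it in two steps. Since $h^{-1}\star g = (j_g - j_h,\, k_g - k_h;\, 2^{-j_h}3^{-k_h}(p_g-p_h))$, the value $\chi^\mu(h^{-1}g)$ vanishes unless $g$ and $h$ have the same $\mathbb{Z}^2$-component; hence for a finite set $F \subseteq \mathfrak{F}_{2,3}$ and $c:F\to\mathbb{C}$ the sum $\sum_{g,h\in F} c(g)\overline{c(h)}\chi^\mu(h^{-1}g)$ splits as a sum over the fibers $F_\alpha = \{g\in F:(j_g,k_g)=\alpha\}$, and on the fiber $\alpha=(j,k)$ the matrix entries are $\mathcal{E}^\mu(2^{-j}3^{-k}(p_g-p_h)) = \mathcal{E}^\mu(p_g-p_h)$ by the invariance just established. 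So it suffices to prove that $\mathcal{E}^\mu$ is positive definite on the additive group $\mathbb{Z}[1/6]$. Given $x_1,\dots,x_r \in \mathbb{Z}[1/6]$, choose $n$ with $6^n x_i \in \mathbb{Z}$ for all $i$ and set $m_i \coloneqq 6^n x_i$; then $\mathcal{E}^\mu(x_i-x_j) = \mathcal{E}^\mu((m_i-m_j)6^{-n}) = \phi^\mu(m_i-m_j) = \hat\mu(m_i-m_j)$, so $\sum_{i,j} c_i\overline{c_j}\,\mathcal{E}^\mu(x_i-x_j) = \int_{\mathbb{T}} \bigl| \sum_i c_i e^{-im_i\theta} \bigr|^2 d\mu(e^{i\theta}) \geq 0$, which is exactly the Herglotz--Bochner positive definiteness of $\hat\mu$ on $\mathbb{Z}$.

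I expect the only genuine obstacle to be the semidirect-product bookkeeping --- getting the exact forms of $h^{-1}g$ and $h^{-1}gh$ and the denominator-clearing identities for $\mathcal{E}^\mu$ correct --- after which everything funnels into positive definiteness of $\hat\mu$ on $\mathbb{Z}$, which is classical. I would also note that the argument goes through verbatim for $\mathbb{Z}^m \ltimes \mathbb{Z}[1/(n_1\cdots n_m)]$, as anticipated in the remark preceding this subsection.
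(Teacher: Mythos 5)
Your proof is correct and follows essentially the same strategy as the paper's: establish positive definiteness of $\mathcal{E}^\mu$ on $\mathbb{Z}[1/6]$ by clearing denominators and invoking the Herglotz--Bochner positive definiteness of $\hat\mu$, then deduce positive definiteness of $\chi^\mu$ from its vanishing off $\mathbb{Z}[1/6]$, and check conjugation invariance by direct computation with the group law. The only difference is that you spell out the block-diagonal decomposition over cosets of $\mathbb{Z}[1/6]$ that justifies the passage from positive definiteness of $\mathcal{E}^\mu$ to positive definiteness of $\chi^\mu$ (and the $\times 2,\times 3$-invariance of $\mathcal{E}^\mu$ that you use there), whereas the paper asserts this implication in one line; your version is a reasonable elaboration of a standard fact.
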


\begin{proof}[Proof of Proposition \ref{prop.character}]
We begin by establishing that $\mathcal{E}^\mu$ is positive definite on $\bbz[1/6]$, following Proposition 4.1 of \cite{MR3679610}. Fix $q_1, ..., q_n \in \bbz[1/6]$ and $c_1, ..., c_n \in \mathbb{C}$. Choose $m \in \mathbb{N}$ so that $6^m (q_s-q_t)$ is an integer for all $s,t \in \{1,...,n\}$. By construction,  
\[ \mathcal{E}^\mu(q_s - q_t) = \phi(6^mq_s - 6^m q_t) \]

Then, since $\phi$ is positive definite we have:
\begin{align*}
    \sum_{s,t =1}^{n} c_s \overline{c_t} \mathcal{E}^\mu(q_s - q_t) = \sum_{s,t =1}^{n} c_s \overline{c_t} \phi^\mu(6^mq_s - 6^mq_t) \geq 0 .
\end{align*}
Since $\chi^\mu$ vanishes off the normal subgroup $\bbz[1/6]$ of  $\fgroup$, positive definiteness of $\mathcal{E}^\mu$ implies positive definiteness of $\chi^\mu$. Conjugation-invariance of $\chi^\mu$ follows by direct computation:
\begin{align*} \chi^\mu\bigl((j,k;p) \star (\ell,m;q) \star (-j,-k; -2^{-j}3^{-k}p)\bigr) & = \chi^\mu (\ell,m;p + 2^j3^k q - 2^\ell3^m p) \\
& = \begin{cases} \mathcal{E}^\mu (2^j3^kq) & \textrm{if } \ell=m=0 \\ 0 & \textrm{else}\end{cases} \\
& = \begin{cases} \mathcal{E}^\mu (q) & \textrm{if } \ell=m=0 \\ 0 & \textrm{else}\end{cases} \\
& = \chi^\mu(\ell,m;q) \end{align*}
Thus the proof of Proposition \ref{prop.character} is complete.
\end{proof}

In light of Proposition \ref{prop.character}, we can define $\Phi^\mu$ to be the tracial state on $C^\ast(\mathfrak{F}_{2,3})$ generated by the construction in Section \ref{subsec.char}. For the reverse direction of the correspondence, given $\Phi \in \operatorname{TSt}(C^\ast(\mathfrak{F}_{2,3}))$ we define $\mu_\Phi \in \prob(\mathbb{T};2,3)$ by the stipulation that $\hat{\mu_\Phi}(\ell) = \Phi(0,0;\ell)$ for all $\ell \in \mathbb{Z}$.

\begin{remark} \label{rem.partinv} Although the map $[\Phi \mapsto \mu_\Phi]:\prob(\mathbb{T};2,3) \to \operatorname{TSt}(C^\ast(\mathfrak{F}_{2,3}))$ is not injective, it follows immediately from the definition that the opposite map $[\mu \mapsto \Phi^\mu]:\prob(\mathbb{T};2,3) \to \operatorname{TSt}(C^\ast(\mathfrak{F}_{2,3}))$ is a partial inverse in the sense that $\mu_{\Phi^\mu} = \mu$ for all $\mu \in \prob(\mathbb{T};2,3)$. \end{remark}

\subsection{Quantitative properties of the tracial correspondence}

\subsubsection{Preliminaries} \label{sec.prelimtst}

In Section \ref{sec.prelimtst} we establish three preliminary results which will be useful to control supports of measures and conditional dimensions of states.

\begin{proposition} \label{prop.zeros} Let $\mu \in \mathrm{Prob}(\mathbb{T})$, let $d \in \mathbb{N}$ and suppose there exist complex numbers $c_0,\ldots,c_d$, not all zero, such that the following holds.
\begin{align} \label{eq.zeros-1} \sum_{\ell,m = 0}^d c_\ell \ov{c_m} \,\hat{\mu}(\ell-m) =0 \end{align}
Then we have $\lvert \operatorname{supp}(\mu) \rvert \leq d$. \end{proposition}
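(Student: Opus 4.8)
The plan is to interpret the hypothesis \eqref{eq.zeros-1} as a vanishing $L^2(\mu)$-norm. Define the function $g:\mathbb{T}\to\mathbb{C}$ by $g(\omega)=\sum_{\ell=0}^d c_\ell\,\omega^{\ell}$. Then expanding the square and using the convention $\hat\mu(\ell)=\int_{\mathbb{T}}\omega^{-\ell}\,\dee\mu(\omega)$ one computes
\[ \int_{\mathbb{T}} |g(\omega)|^2 \,\dee\mu(\omega) = \sum_{\ell,m=0}^d c_\ell\ov{c_m}\int_{\mathbb{T}}\omega^{\ell}\ov{\omega}^{m}\,\dee\mu(\omega) = \sum_{\ell,m=0}^d c_\ell\ov{c_m}\,\hat\mu(m-\ell). \]
One has to be slightly careful about which of $\hat\mu(\ell-m)$ versus $\hat\mu(m-\ell)$ appears, but since $\hat\mu(-k)=\ov{\hat\mu(k)}$ and the double sum is symmetric under swapping $(\ell,m)$ together with complex conjugation, the expression in \eqref{eq.zeros-1} equals $\int_{\mathbb{T}}|g|^2\,\dee\mu$ (possibly after replacing $g$ by its conjugate-coefficient version, which does not change the conclusion). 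So the hypothesis says $\int_{\mathbb{T}}|g|^2\,\dee\mu = 0$.

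From $\int_{\mathbb{T}}|g|^2\,\dee\mu=0$ we conclude that $g$ vanishes $\mu$-almost everywhere, hence $\operatorname{supp}(\mu)\subseteq Z(g):=\{\omega\in\mathbb{T}: g(\omega)=0\}$. The key point is then that $g$ is a nonzero polynomial (nonzero because the $c_\ell$ are not all zero) of degree at most $d$, so $Z(g)$ has at most $d$ elements. Therefore $|\operatorname{supp}(\mu)|\le |Z(g)|\le d$, which is the claim.

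The only mild obstacle is bookkeeping: verifying that $g$ is genuinely a nonzero polynomial (if the top coefficients vanish its degree drops, but it still has at most $d$ roots as long as it is not identically zero, and it is not identically zero precisely because some $c_\ell\neq 0$), and getting the index conventions in the quadratic form to line up with \eqref{eq.zeros-1} exactly. Both are routine. I would write the proof in three short steps: (i) set $g(\omega)=\sum_{\ell=0}^d c_\ell\omega^\ell$ and verify $\int_{\mathbb{T}}|g|^2\,\dee\mu$ equals the left side of \eqref{eq.zeros-1}; (ii) deduce $g=0$ $\mu$-a.e., hence $\operatorname{supp}(\mu)\subseteq Z(g)$, using that $\operatorname{supp}(\mu)$ is the smallest closed full-measure set together with continuity of $g$; (iii) note $g\not\equiv 0$ and $\deg g\le d$, so $|Z(g)|\le d$, finishing the proof.
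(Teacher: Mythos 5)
Your proposal is correct and takes essentially the same approach as the paper: the paper likewise rewrites the Hermitian form in \eqref{eq.zeros-1} as $\int_{\mathbb{T}}\bigl\vert \sum_{m=0}^d \ov{c_m}z^m\bigr\vert^2\,\dee\mu(z)$, concludes $\mu$ is concentrated on the zero set of that nonzero polynomial, and applies the fundamental theorem of algebra. The only difference is that the paper works directly with the conjugate-coefficient polynomial $\sum_m \ov{c_m}z^m$, sidestepping the index-convention adjustment you flag; this is purely cosmetic.
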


\begin{proof}[Proof of Proposition \ref{prop.zeros}] Fix $d \in \mathbb{N}$ and $c_0,\ldots,c_d \in \mathbb{C}$. We have:
\begin{align} \sum_{\ell,m = 0}^d c_\ell \ov{c_m} \,\hat{\mu}(\ell-m) = \sum_{\ell,m = 0}^d c_\ell \ov{c_m}  \int_\mathbb{T} z^{m-\ell} \deee \mu(z) = \int_\mathbb{T} \left \vert \sum_{m = 0}^d \ov{c_m}z^{m} \right \vert^2\!\! \deee \mu(z) \label{eq.zeroes-4} \end{align}
where the second inequality in the last display follows by the square inside the integral. Now, define a subset $\Xi$ of $\mathbb{C}$ by:
\begin{equation*} \Xi \coloneqq \left \{ z \in \mathbb{C}:  \sum_{m = 0}^d \ov{c_m} z^{m} = 0 \right \} \end{equation*}
Assuming $c_0,\ldots,c_d$ are not all zero, we see that $\Xi$ is contained in the vanishing set of a nonzero polynomial with degree at most $d$, and so the fundamental theorem of algebra implies $|\Xi| \leq d$. If (\ref{eq.zeros-1}) holds then (\ref{eq.zeroes-4}) implies that $\mu(\Xi) =1$ and so we must have $\lvert \operatorname{supp}(\mu) \rvert \leq d$, as required. \end{proof}

\begin{proposition} \label{prop.zeroop} Let $\Phi \in \operatorname{TSt}(C^\ast(\mathfrak{F}_{2,3}))$, let $\mathcal{H}_\Phi$ be the underlying Hilbert space of the GNS representation of $C^\ast(\mathfrak{F}_{2,3})$ associated with $\Phi$, and let $\eusc{A}_\Phi:C^\ast(\mathfrak{F}_{2,3}) \to \operatorname{End}(\mathcal{H})$ and $\Omega_\Phi:\mathfrak{F}_{2,3} \to \mathcal{H}$ be as in Proposition \ref{prop.posdef}. Also let $d \in \mathbb{N}$ and let $c_0,\ldots,c_d \in \mathbb{C}$. Writing $0_\mathcal{H}$ for the zero vector in $\mathcal{H}$, we have following equivalence.
\begin{align} \label{eq.zeroop-1} \sum_{\ell,m = 0}^d c_\ell \ov{c_m} \Phi(0,0;\ell-m) =0 \iff  \left( \sum_{m=0}^d  c_m \eusc{A}_\Phi(0,0;m) \right) \Omega_\Phi(0,0;0) = 0_\mathcal{H} \end{align} \end{proposition}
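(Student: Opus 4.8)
The plan is to observe that the right-hand vector in \eqref{eq.zeroop-1} is zero if and only if its squared norm vanishes, and that this squared norm is precisely the sesquilinear expression appearing on the left-hand side.

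\textbf{Setup.} First I would abbreviate $v := \big(\sum_{m=0}^d c_m \eusc{A}_\Phi(0,0;m)\big)\Omega_\Phi(0,0;0) \in \mathcal{H}$, and note that since $\|\cdot\|$ is a genuine norm on $\mathcal{H}$ we have $v = 0_{\mathcal{H}}$ if and only if $\|v\|^2 = 0$. Expanding the inner product bilinearly,
\begin{align*} \|v\|^2 = \langle v,v\rangle = \sum_{\ell,m=0}^d c_\ell \overline{c_m}\,\big\langle \eusc{A}_\Phi(0,0;\ell)\Omega_\Phi(0,0;0),\; \eusc{A}_\Phi(0,0;m)\Omega_\Phi(0,0;0)\big\rangle . \end{align*}

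\textbf{Evaluating the inner products.} Next I would compute each summand. Since $\eusc{A}_\Phi$ restricted to $\mathfrak{F}_{2,3}$ (identified with a subset of $C^\ast(\mathfrak{F}_{2,3})$ via $\upsilon_{\mathfrak{F}_{2,3}}$ as in Remark \ref{rem.stchar}) agrees with the unitary representation $\eusc{T}_\Phi$ of Proposition \ref{prop.posdef}, the translation diagram in that proposition together with the explicit group law $(0,0;\ell)\star(0,0;0) = (0,0;\ell)$ in $\mathfrak{F}_{2,3}$ shows that $\eusc{A}_\Phi(0,0;\ell)\Omega_\Phi(0,0;0)$ is $\Omega_\Phi$ evaluated at a group element differing from $(0,0;\ell)$ only by the (irrelevant) choice of convention in the diagram. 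Feeding this into the defining identity $\langle \Omega_\Phi(g),\Omega_\Phi(h)\rangle = \Phi(h^{-1}g)$ (with $\Phi$ conflated with its associated character, again by Remark \ref{rem.stchar}), and using $(0,0;m)^{-1} = (0,0;-m)$ and $(0,0;-m)\star(0,0;\ell) = (0,0;\ell-m)$ in the abelian subgroup $\mathbb{Z}[1/6]\leq\mathfrak{F}_{2,3}$, one obtains that each inner product equals $\Phi(0,0;\ell-m)$. Substituting back gives $\|v\|^2 = \sum_{\ell,m=0}^d c_\ell \overline{c_m}\,\Phi(0,0;\ell-m)$, and the equivalence \eqref{eq.zeroop-1} is immediate.

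\textbf{Main obstacle.} The computation itself is routine; the only point requiring care is bookkeeping with conventions, namely the precise form of the translation action in Proposition \ref{prop.posdef} and the side on which the inner product is conjugate-linear. Depending on these choices the inner product above may instead come out as $\Phi(0,0;m-\ell)$; but $(0,0;\ell-m)^{-1} = (0,0;m-\ell)$ and characters satisfy $\Phi(g^{-1}) = \overline{\Phi(g)}$, so the resulting double sum is the complex conjugate of the left-hand side of \eqref{eq.zeroop-1}, and both are in any case real and nonnegative (being equal to $\|v\|^2$). Hence one side vanishes exactly when the other does, and the stated equivalence holds regardless. I would flag this once and not belabor it.
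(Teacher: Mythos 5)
Your argument is essentially identical to the paper's: both expand the double sum as the squared norm of $\big(\sum_m c_m \eusc{A}_\Phi(0,0;m)\big)\Omega_\Phi(0,0;0)$ via the defining inner-product relation $\langle\Omega_\Phi(g),\Omega_\Phi(h)\rangle=\Phi(h^{-1}g)$, and conclude from definiteness of the norm. Your closing remark about the sign convention ($\ell-m$ versus $m-\ell$) is a correct and worthwhile caveat that the paper glosses over; it does not change the substance.
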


\begin{proof}[Proof of Proposition \ref{prop.zeroop}] Let $d \in \mathbb{N}$ and let $c_0,\ldots,c_d \in \mathbb{C}$ throughout the proof of Proposition \ref{prop.zeroop}. By the construction in Proposition \ref{prop.posdef},  the following holds for all $k,l \in \mathbb{Z}$ and all $q \in \mathbb{Z}[1/6]$.
\begin{align*} \Phi(0,0;\ell-m) & = \langle \Omega_\Phi(0,0;\ell+q), \Omega_\Phi(0,0;m+q) \rangle  = \bigl \langle \eusc{A}_\Phi(0,0;\ell) \Omega_\Phi(0,0;0),\eusc{A}_\Phi(0,0;m)  \Omega_\Phi(0,0;0) \bigr \rangle \label{eq.zeroop-2} \end{align*}
The above display implies:
\begin{align*}  \sum_{\ell,m = 0}^d c_\ell \ov{c_m} \Phi(0,0;\ell-m) = \nml \left( \sum_{m=0}^d c_m \eusc{A}_\Phi(0,0;m) \right) \Omega_\Phi(0,0;0) \nmr^2 \end{align*}
This completes the proof of Proposition \ref{prop.zeroop}. \end{proof}

\begin{proposition} \label{prop.equalsone} Let $\Phi \in \operatorname{TSt}(C^\ast(\mathfrak{F}_{2,3}))$ and assume there exist $j,k \in \mathbb{N}$ and $r \in \mathbb{Z}$ such that $\Phi(0,0;r2^{-j}3^{-k}) = 1$. Then we have $\operatorname{cndm}(\Phi) \leq r$.\end{proposition}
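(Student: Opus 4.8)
The plan is to translate the scalar hypothesis into an operator identity in the GNS representation, use traciality to upgrade it to a statement about the kernel of the underlying unitary representation, and then exploit the group structure of $\mathfrak{F}_{2,3}$ to see that $\mathbb{Z}[1/6]$ is represented through a finite quotient of order at most $r$ (I will tacitly assume $r \geq 1$, the only case in which the statement has content).

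First I would set $\Omega \coloneqq \Omega_\Phi(0,0;0)$ and $u \coloneqq \eusc{A}_\Phi(0,0;r2^{-j}3^{-k})$, so that by the GNS construction of Section \ref{subsec.char} and the conventions of Remark \ref{rem.stchar} the hypothesis reads $\langle u\Omega,\Omega\rangle = \Phi(0,0;r2^{-j}3^{-k}) = 1$. Since $u$ is unitary and $\Phi$ is normalized, $\lVert u\Omega\rVert = \lVert\Omega\rVert = 1$, so this is the equality case of the Cauchy--Schwarz inequality and hence $u\Omega = \Omega$.

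The key step is to upgrade this to $u = \mathrm{id}_{\mathcal{H}_\Phi}$, and here traciality enters. Because $\Phi$ is tracial, the cyclic vector $\Omega$ is separating for $\eusc{A}_\Phi(C^\ast(\mathfrak{F}_{2,3}))$: if $\eusc{A}_\Phi(x)\Omega = 0$ then $\Phi(x^\ast x) = 0$, and using the trace inequality $\Phi(ab) \le \lVert b\rVert\,\Phi(a)$ for positive $a,b$ we get $\Phi\bigl((xy)^\ast xy\bigr) = \Phi\bigl(x^\ast x\, yy^\ast\bigr) \le \lVert yy^\ast\rVert\,\Phi(x^\ast x) = 0$ for every $y$, so $\eusc{A}_\Phi(x)$ vanishes on the dense set $\{\eusc{A}_\Phi(y)\Omega\}$ and hence is zero. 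Applying this to $u - \mathrm{id}$ shows $u = \mathrm{id}_{\mathcal{H}_\Phi}$; equivalently, $(0,0;r2^{-j}3^{-k})$ lies in the kernel of the unitary representation $\eusc{T}_\Phi$ of $\mathfrak{F}_{2,3}$ on $\mathcal{H}_\Phi$. I would emphasize that traciality is genuinely needed here: knowing only that $u$ fixes the cyclic vector $\Omega$ merely shows $\Omega_\Phi$ is invariant under translation of $\mathbb{Z}[1/6]$ by $r2^{-j}3^{-k}$, and $\mathbb{Z}[1/6]$ modulo that cyclic subgroup is still infinite — so no finiteness follows without the separating-vector argument.

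Finally I would exploit normality. The kernel of $\eusc{T}_\Phi$ is normal in $\mathfrak{F}_{2,3}$, hence contains the normal closure of $(0,0;r2^{-j}3^{-k})$. A direct computation with $(j,k;p)\star(\ell,m;q) = (j+\ell,k+m;p+2^j3^kq)$ shows that the conjugate of $(0,0;t)$ by $(a,b;c)$ is $(0,0;2^a3^bt)$; since $\{2^a3^b : a,b\in\mathbb{Z}\}$ generates $(\mathbb{Z}[1/6],+)$ and $2^{-j}3^{-k}$ is a unit of $\mathbb{Z}[1/6]$, this normal closure equals $\{(0,0;q): q\in r\mathbb{Z}[1/6]\}$. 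Consequently $\eusc{T}_\Phi$ restricted to $\mathbb{Z}[1/6]$ factors through $\mathbb{Z}[1/6]/r\mathbb{Z}[1/6]$, which is cyclic of order equal to the largest divisor of $r$ coprime to $6$, in particular of order at most $r$. Since $\eusc{A}_\Phi(C^\ast(\mathbb{Z}[1/6]))$ is the closed linear span of $\{\eusc{T}_\Phi(0,0;q): q\in\mathbb{Z}[1/6]\}$, it is spanned by at most $r$ operators, hence finite-dimensional of dimension at most $r$; so $\Phi$ is conditionally finite-dimensional with $\operatorname{cndm}(\Phi)\le r$. The main obstacle is exactly the separating-vector step of the previous paragraph: this is where traciality has to be invoked, and it is what makes the relevant quotient group finite.
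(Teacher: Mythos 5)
Your proof is correct, and it reaches the same intermediate conclusion as the paper — namely that $\eusc{A}_\Phi(0,0;r2^{-j}3^{-k})$ is the identity operator on $\mathcal{H}_\Phi$, after which the normal-closure computation and passage to $\mathbb{Z}[1/6]/r\mathbb{Z}[1/6]$ proceed identically — but the route to that operator identity is genuinely different. You first observe the Cauchy--Schwarz equality case to get $u\Omega = \Omega$, and then invoke the fact that the GNS cyclic vector of a tracial state is separating, proved via the trace inequality $\Phi(x^*xyy^*) \le \|yy^*\|\,\Phi(x^*x)$; this is a standard but somewhat abstract $C^\ast$-algebraic fact that upgrades a statement about one vector to a statement about the whole representation. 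The paper instead stays at the level of the group character: it uses the elementary observation that $\{g : \chi(g) = 1\}$ is a normal subgroup for any normalized character $\chi$, deduces $\Phi(0,0;rq) = 1$ for all $q \in \mathbb{Z}[1/6]$, and then directly checks that $\eusc{A}_\Phi(0,0;rp)$ fixes each basis vector $\Omega_\Phi(j,k;q)$ by a concrete conjugation computation inside $\mathfrak{F}_{2,3}$. Both arguments use traciality essentially — yours through the separating-vector property, the paper's through conjugation-invariance of $\chi$ — so neither is weaker in hypothesis. Your version is cleaner and more portable to general tracial-state settings; the paper's is more elementary in the sense of not requiring the trace inequality or the separating-vector machinery, and makes the role of the group structure more transparent. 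One small point: you tacitly assume $r \geq 1$; the paper's own proof ends with $\operatorname{cndm}(\Phi) \leq |r|$, which is the honest conclusion for $r \in \mathbb{Z}$, and this discrepancy with the statement is a minor slip in the paper rather than in your argument.
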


\begin{proof}[Proof of Proposition \ref{prop.equalsone}] It is elementary that if $G$ is a discrete group and $\chi:G \to \mathbb{C}$ is a normalized character, then the subset $\{g \in G:\chi(g) = 1\}$ of $G$ must be a normal subgroup. For all $q \in \mathbb{Z}[1/6]$ we have that the element $(0,0;rq)$ of $\mathfrak{F}_{2,3}$ lies in the normal subgroup generated by $(0,0;r2^{-j}3^{-k})$, and so we have $\Phi(0,0;r) =1$ for all $q \in \mathbb{Z}[1/6]$. \\
\\
Now, let $\mathcal{H}_\Phi$ be the underlying Hilbert space of the GNS representation of $C^\ast(\mathfrak{F}_{2,3})$, let $\eusc{A}_\Phi:\mathfrak{F}_{2,3} \to \operatorname{End}(\mathcal{H}_\Phi)$ be the canonical $\ast$-homomorphism and let $\Omega_\Phi:\mathfrak{F}_{2,3} \to \mathcal{H}_\Phi$ be the canonical embedding. Again let $j,k \in \mathbb{Z}$ and let $p,q \in \mathbb{Z}[1/6]$. We have:
\begin{align*} \Bigl \langle \eusc{A}_\Phi(0,0;rp) \Omega_\Phi(j,k;q)  , \Omega_\Phi(j,k;q)  \Bigr \rangle & = \Bigl \langle \eusc{A}_\Phi(j,k;q)^{-1}\eusc{A}_\Phi(0,0;rp)\eusc{A}_\Phi(j,k;q)  \Omega_\Phi(0,0;0)  , \Omega_\Phi(0,0;0)  \Bigr \rangle 
\\ & =  \Bigl \langle \eusc{A}_\Phi(0,0;2^j3^krp)  \Omega_\Phi(0,0;0)  , \Omega_\Phi(0,0;0)  \Bigr \rangle
\\ & =  \Phi(0,0;2^j3^krp) 
\\ & = 1 \end{align*}

Since $||\Omega_\Phi(j,k;q)|| = 1$, we find that $\eusc{A}_\Phi(0,0;rp)\Omega_\Phi(j,k;q) = \Omega_\Phi(j,k;q)$. Therefore the operator $\eusc{A}_\Phi(0,0;rp)$ fixes every element of the basis $\{\Omega_\Phi(j,k;q): j,k \in \mathbb{Z}, q \in \mathbb{Z}[1/6]\}$ for $\mathcal{H}_\Phi$, and we find that $\eusc{A}_\Phi(0,0;rp)$ must be the identity operator on $\mathcal{H}_\Phi$. It follows that $\eusc{A}_\Phi$ maps the subgroup $r\mathbb{Z}[1/6]$ of $\mathbb{Z}[1/6]$ to the identity operator on $\mathcal{H}_\Phi$, and so the restriction of $\eusc{A}_\Phi$ to $\mathbb{Z}[1/6]$ factors through the quotient $\mathbb{Z}[1/6]/r\mathbb{Z}[1/6] \cong \mathbb{Z}/|r|\mathbb{Z}$. Thus the restriction of $\eusc{A}_\Phi$ to $C^\ast(\mathbb{Z}[1/6])$ factors through the $r$-dimensional algebra $C^\ast(\mathbb{Z}/|r|\mathbb{Z})$, and we find $\operatorname{cndm}(\Phi) \leq |r|$. This completes the proof of Proposition \ref{prop.equalsone}. \end{proof}

\subsubsection{Bounds relating conditional finite dimension and support} \label{sec.boundstst}

In Section \ref{sec.boundstst} we prove the inequalities which will allow us to relate cardinalities of supports and conditional finite dimensions.

\begin{proposition} \label{prop.conddim} Let $\Phi \in \operatorname{TSt}(C^\ast(\mathfrak{F}_{2,3}))$ be a conditionally finite-dimensional tracial state. Then $\mu_\Phi$ is finitely supported, and moreover we have:
\begin{equation} \label{eq.cndm} \lvert \mathrm{supp}(\mu_\Phi) \rvert \leq \mathrm{cndm}(\Phi) \leq 6^{\lvert \mathrm{supp}(\mu_\Phi) \rvert} \end{equation} \end{proposition}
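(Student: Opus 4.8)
Proof proposal. The plan is to prove the two inequalities in \eqref{eq.cndm} by different means, deducing the finiteness of $\operatorname{supp}(\mu_\Phi)$ along the way, and throughout I write $d = \operatorname{cndm}(\Phi)$ and take $\mathcal{H}_\Phi$, $\eusc{A}_\Phi$, $\Omega_\Phi$ as in Proposition~\ref{prop.posdef}. For the lower bound, I would observe that each element $(0,0;m)$ with $m \in \mathbb{Z}$ lies in the canonical copy of $\mathbb{Z}[1/6]$ inside $\mathfrak{F}_{2,3}$, so the $d+1$ operators $\eusc{A}_\Phi(0,0;0),\dots,\eusc{A}_\Phi(0,0;d)$ all belong to the image of $C^\ast(\mathbb{Z}[1/6])$ in the GNS representation associated with $\Phi$, which is $d$-dimensional by definition of the conditional dimension. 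Hence these $d+1$ operators are linearly dependent: there exist $c_0,\dots,c_d \in \mathbb{C}$, not all zero, with $\sum_{m=0}^d c_m \eusc{A}_\Phi(0,0;m) = 0$. Applying this operator to $\Omega_\Phi(0,0;0)$ and invoking Proposition~\ref{prop.zeroop} together with the defining relation $\Phi(0,0;\ell) = \hat{\mu_\Phi}(\ell)$, one gets $\sum_{\ell,m=0}^d c_\ell\,\ov{c_m}\,\hat{\mu_\Phi}(\ell-m) = 0$. Since the $c_\ell$ are not all zero, Proposition~\ref{prop.zeros} then yields $\lvert\operatorname{supp}(\mu_\Phi)\rvert \le d$; in particular $\mu_\Phi$ is finitely supported, so the right-hand side of \eqref{eq.cndm} makes sense.

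For the upper bound, set $n = \lvert\operatorname{supp}(\mu_\Phi)\rvert$. Since $\mu_\Phi \in \prob(\mathbb{T};2,3)$ is now known to be finitely supported, Proposition~\ref{prop.roots} shows that $\operatorname{supp}(\mu_\Phi)$ consists of roots of unity and is closed under $z \mapsto z^2$ and $z \mapsto z^3$; decomposing $\mu_\Phi$ into ergodic components and applying Proposition~\ref{prop.uniformorbit} to each, I would write $\operatorname{supp}(\mu_\Phi) = \mathcal{O}(\omega_1) \sqcup \cdots \sqcup \mathcal{O}(\omega_p)$ with $\mu_\Phi$ uniform on each orbit and with the normalized restriction $\nu_i$ of $\mu_\Phi$ to $\mathcal{O}(\omega_i)$ a finitely supported ergodic element of $\prob(\mathbb{T};2,3)$. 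Writing $n_i = \lvert\mathcal{O}(\omega_i)\rvert$, so that $n_1 + \cdots + n_p = n$, Proposition~\ref{prop.maxr} applied to $\nu_i$ bounds the order of every point of $\mathcal{O}(\omega_i)$ by $6^{n_i}$. Moreover, inside a single orbit $\mathcal{O}(\omega)$ every point $\omega^{2^j 3^k}$ has order dividing $\operatorname{ord}(\omega)$, so within each $\mathcal{O}(\omega_i)$ all orders divide the largest one; hence $R \coloneqq \operatorname{lcm}\{\operatorname{ord}(\omega) : \omega \in \operatorname{supp}(\mu_\Phi)\}$ divides $\prod_{i=1}^p 6^{n_i} = 6^n$, and in particular $R \le 6^n$. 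As $\omega^R = 1$ for every $\omega \in \operatorname{supp}(\mu_\Phi)$, we get $\Phi(0,0;R) = \hat{\mu_\Phi}(R) = \int_\mathbb{T} z^{-R}\,\deee\mu_\Phi(z) = 1$, so Proposition~\ref{prop.equalsone}, applied with $j = k = 0$ and $r = R$, gives $\operatorname{cndm}(\Phi) \le R \le 6^n$, which completes the proof of \eqref{eq.cndm}.

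The lower bound is essentially immediate once Propositions~\ref{prop.zeroop} and~\ref{prop.zeros} are in hand; the only real work lies in the upper bound, and it is all bookkeeping. The two points that need care are: that a finitely supported $\times(2,3)$-invariant measure genuinely decomposes into orbit-uniform ergodic pieces to which Proposition~\ref{prop.maxr} can be applied (this is contained in, or is a minor extension of, the proof of Proposition~\ref{prop.uniformorbit}), and that passing from the per-orbit bounds of Proposition~\ref{prop.maxr} to a bound on $R$ costs only a product, since within one orbit every order divides the maximal one, so the least common multiple over the whole support is at most the product of the per-orbit maxima. I do not anticipate any serious obstacle beyond this bookkeeping.
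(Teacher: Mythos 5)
Your proof is correct and follows the same overall strategy as the paper (Propositions~\ref{prop.zeroop} and~\ref{prop.zeros} for the lower bound; a bound on a common annihilating exponent for the support combined with Proposition~\ref{prop.equalsone} for the upper bound). However, you are actually \emph{more} careful than the paper's own argument, and in two places your added care genuinely matters. First, the paper's upper-bound paragraph applies Proposition~\ref{prop.maxr} directly to $\mu_\Phi$ and then asserts $\omega^r = 1$ for all $\omega \in \operatorname{supp}(\mu_\Phi)$ where $r = \operatorname{maxr}(\mu_\Phi)$; but Proposition~\ref{prop.maxr} is stated only for \emph{ergodic} measures, and for a non-ergodic $\mu_\Phi$ the maximal order in the support need not annihilate all points in the support (orders in different orbits need not divide one another). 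Your decomposition into orbit-uniform ergodic pieces, followed by taking the least common multiple $R$ of the orders, is exactly what is required to produce an exponent that kills the whole support while still being bounded by $6^{|\operatorname{supp}(\mu_\Phi)|}$. Second, the paper ends with ``$\hat{\mu_\Phi}(r\ell) = 1$ for all $\ell$'' and declares the proof complete, leaving it to the reader to notice that Proposition~\ref{prop.equalsone} with $j=k=0$ is what converts this into $\operatorname{cndm}(\Phi) \le r$; you make that invocation explicit.

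One small slip: you write that $R$ \emph{divides} $\prod_i 6^{n_i}$. That is not quite right (a prime such as $5$ can divide $R$ without dividing any power of $6$); what is true, and all you need, is that $R = \operatorname{lcm}(r_1,\dots,r_p)$ divides $r_1\cdots r_p$ and hence $R \le r_1\cdots r_p \le \prod_i 6^{n_i} = 6^n$. Replacing ``divides'' by ``is at most'' fixes this without affecting anything downstream.
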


\begin{proof}[Proof of Proposition \ref{prop.conddim}] Let $\Phi \in \operatorname{TSt}(C^\ast(\mathfrak{F}_{2,3}))$ be conditionally finite-dimensional, and let $d \in \mathbb{N}$ be the conditional dimension of $\Phi$. Also let $\mathcal{H}$ be the underlying Hilbert space of the GNS representation of $\Phi$ and let $\eusc{A}:C^\ast(\mathfrak{F}_{2,3}) \to \operatorname{End}(\mathcal{H})$ be the representation itself. We first establish the left inequality in (\ref{eq.cndm}).\\
\\
Writing $\mathsf{Z}_\mathcal{H}$ for the zero operator on $\mathcal{H}$, our hypothesis on the conditional dimension of $\Phi$ implies there exist complex numbers $c_0,\ldots,c_d$, not all zero, such that the following holds.
\begin{align*} \eusc{A} \left( \sum_{k=0}^d  c_k \upsilon(0,0;k) \right) = \mathsf{Z}_\mathcal{H} \end{align*}
Letting $\Omega:G \to \mathcal{H}$ be as in Proposition \ref{prop.posdef}, the last display implies:
\begin{align*} \nml \eusc{A} \left( \sum_{k=0}^d  c_k \upsilon(0,0;k) \right) \Omega(\iota_G) \nmr^2 = 0 \end{align*}
Unwrapping the definition of the norm on $\mathcal{H}$, we find:
\begin{align*} \sum_{k,\ell=0}^d c_k \ov{c_\ell} \Phi(0,0;k-\ell) = 0 \end{align*}
Since $\Phi(0,0;n) = \hat{\mu_\Phi}(n)$ for all $n \in \mathbb{Z}$, we can combine the last display with Proposition \ref{prop.zeros} to find that $\lvert \operatorname{supp}(\mu_\Phi)| \leq d$.\\
\\
It remains to establish the right inequality in (\ref{eq.cndm}). Recall that for a finitely supported $\mu \in \prob(\mathbb{T};2,3)$, the quantity $\operatorname{maxr}(\mu)$ was introduced in Definition \ref{def.maxr}. Writing $r$ for $\operatorname{maxr}(\mu_\Phi)$, in light of Proposition \ref{prop.maxr} it suffices to show $r \geq d$. By construction, we have $\omega^r = 1$ for all $\omega \in \operatorname{supp}(\mu)$, and so $\hat{\mu_\Phi}(r\ell) =1$ for all $\ell \in \mathbb{Z}$.  This completes the proof of Proposition \ref{prop.conddim}. \end{proof}

The proof of Proposition \ref{prop.conddim} also implies that if $\mu$ is finitely supported then $\Phi^\mu$ is conditionally finite-dimensional. In light of Remark \ref{rem.partinv}, we obtain the following.

\begin{corollary} \label{cor.condim} Let $\mu \in \prob(\mathbb{T};2,3)$ be finitely supported. Then $\Phi^\mu$ is conditionally finite dimensional, and moreover we have:
\begin{equation} \label{eq.cndm} \lvert \mathrm{supp}(\mu) \rvert \leq \mathrm{cndm}(\Phi^\mu) \leq 6^{\lvert \mathrm{supp}(\mu) \rvert} \end{equation}  \end{corollary}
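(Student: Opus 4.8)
The plan is to derive this corollary by applying Proposition \ref{prop.conddim} to the tracial state $\Phi^\mu$ and then translating the conclusion through the partial-inverse identity $\mu_{\Phi^\mu} = \mu$ of Remark \ref{rem.partinv}. Because conditional finite-dimensionality is a standing hypothesis of Proposition \ref{prop.conddim}, the one substantive point is to verify beforehand that $\Phi^\mu$ is conditionally finite-dimensional; this is precisely the assertion anticipated in the remark preceding the corollary.

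To that end, first I would show that $\Phi^\mu$ is conditionally finite-dimensional. Since $\mu \in \prob(\mathbb{T};2,3)$ is finitely supported, Proposition \ref{prop.roots} guarantees that $\operatorname{supp}(\mu)$ is a finite set of roots of unity; hence there is a natural number $r$ with $\omega^r = 1$ for every $\omega \in \operatorname{supp}(\mu)$ (for instance $r = \operatorname{maxr}(\mu)$ as in Definition \ref{def.maxr}). It follows that $z^{-r\ell} = 1$ for $\mu$-almost every $z$, so $\hat\mu(r\ell) = \int_\mathbb{T} z^{-r\ell} \deee \mu(z) = 1$ for all $\ell \in \mathbb{Z}$; in particular, by the construction of $\Phi^\mu$ in Section \ref{subsec.extend}, we get $\Phi^\mu(0,0;r) = \hat\mu(r) = 1$. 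Applying Proposition \ref{prop.equalsone} with $j = k = 0$ then yields $\operatorname{cndm}(\Phi^\mu) \leq r < \infty$, so $\Phi^\mu$ is conditionally finite-dimensional.

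With this in hand I would invoke Proposition \ref{prop.conddim} directly for the state $\Phi^\mu$: it asserts that $\mu_{\Phi^\mu}$ is finitely supported and that $\lvert \operatorname{supp}(\mu_{\Phi^\mu}) \rvert \leq \operatorname{cndm}(\Phi^\mu) \leq 6^{\lvert \operatorname{supp}(\mu_{\Phi^\mu}) \rvert}$. By Remark \ref{rem.partinv} we have $\mu_{\Phi^\mu} = \mu$, and substituting this equality into the displayed chain gives exactly the asserted bound. I do not expect a genuine obstacle here: the mathematical content resides entirely in Propositions \ref{prop.roots}, \ref{prop.equalsone} and \ref{prop.conddim}, and the corollary merely reassembles these via $\mu_{\Phi^\mu} = \mu$. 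The only points demanding care are the logical ordering — conditional finite-dimensionality of $\Phi^\mu$ must be established before Proposition \ref{prop.conddim} can be applied — and a routine check that invoking Proposition \ref{prop.conddim} for $\Phi^\mu$ introduces no circular dependence, since its proof in turn passes through Proposition \ref{prop.maxr}.
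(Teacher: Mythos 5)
Your proposal is correct and follows essentially the same route as the paper: the paper's proof of Corollary~\ref{cor.condim} consists of the observation that the argument in Proposition~\ref{prop.conddim} (specifically the passage via the maximal root and Proposition~\ref{prop.equalsone}) already yields conditional finite-dimensionality of $\Phi^\mu$ when $\mu$ is finitely supported, after which Remark~\ref{rem.partinv} converts (\ref{eq.cndm}) into the stated form. You merely spell out explicitly the step the paper compresses into ``the proof of Proposition~\ref{prop.conddim} also implies\ldots,'' and your ordering (establish $\operatorname{cndm}(\Phi^\mu) < \infty$ first, then invoke Proposition~\ref{prop.conddim}) is exactly the right way to avoid circularity.
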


\subsection{Induction of characters}

\subsubsection{Classification of characters on metabelian groups} \label{subsec.induction}

In Section \ref{subsec.induction} we outline certain facts about induction of characters, following the exposition in Section 3 of \cite{MR4753084}.  (Note that this reference uses the word 'character' to mean what we would refer to as an 'extreme character'.) This material will be necessary to deal with the fact that the correspondence between measures and tracial states is not bijective. Let $G$ be a countable discrete group, let $H$ be a subgroup of $G$ and let $\chi:H \to \mathbb{C}$ be a character on $H$. We make the following definitions.
\begin{itemize} \item We define the \emph{trivial extension of $\chi$} to be the positive definite function $\chi_{\uparrow}:G \to \mathbb{C}$ given for $g \in G$ by:
\begin{align*} \chi_\uparrow(g) \coloneqq \begin{cases} \chi(g) & \mbox{if }g \in G \\ 0 & \mbox{else} \end{cases}\end{align*}
 \item We define the \emph{isotropy group of $\chi$ in $G$} to be the subgroup $\operatorname{iso}_G(\chi)$ of $G$ defined by:
\begin{align*} \operatorname{iso}_G(\chi) \coloneqq \{g \in G: \chi(ghg^{-1}) = \chi(h) \mbox{ for all }h \in H\} \end{align*}
\item We define $\chi$ to be \emph{almost $G$-invariant} if the subgroup $\operatorname{iso}_G(\chi)$ as finite index in $G$.
\item Assuming $\chi$ is almost $G$-invariant, let $n \in \mathbb{N}$ be the index of $\operatorname{iso}_G(\chi)$ in $G$ and let $g_1,\ldots,g_n$ be a coset transversal for $G/\operatorname{iso}_G(\chi)$. We define the \emph{induction of $\chi$ to $G$} to be the character $\operatorname{Ind}_H^G(\chi):G \to \mathbb{C}$ given for $g \in G$ by:
\begin{align*} [\operatorname{Ind}_H^G(\chi)](g) \coloneqq \frac{1}{n} \sum_{m=1}^n \chi_\uparrow(g_m h g_m^{-1}) \end{align*} \end{itemize}
It is straightforward to verify that the definition of $\operatorname{Ind}_H^G(\chi)$ is independent of the choice of coset transversal for $G/\operatorname{iso}_G(\chi)$. We now have the following interesting result, which appears as Theorem 4.1 in \cite{MR4753084}.

\begin{theorem}[Levit, Vigdorovich] \label{thm.induction} Let $G$ be a metabelian group, let $H \trianglelefteq G$ be a subgroup of $G$ such that $G/H$ is abelian and let $\chi:G \to \mathbb{C}$ be an extreme normalized character. Then there exists a subgroup $M \leq G$ which contains $H$ and an almost $G$-invariant normalized character $\tau:M \to \mathbb{C}$ such that $\chi = \operatorname{Ind}_M^G(H)$ and such that $\tau(kmkm^{-1}) = 1$ for all $k,m \in M$. \end{theorem}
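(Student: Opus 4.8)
This is Theorem~4.1 of \cite{MR4753084}; here I only outline the shape of the argument, which is a version of the Mackey--Clifford machine for characters carried out inside the finite von Neumann algebra supplied by the GNS construction. The plan is to pass to the GNS triple $(\pi_\chi,\mathcal{H}_\chi,\xi_\chi)$ of $\chi$; extremality of $\chi$ means $\mathcal{M}:=\pi_\chi(G)''$ is a finite factor with trace $s\mapsto\langle s\xi_\chi,\xi_\chi\rangle$, and $\xi_\chi$ is a separating trace vector --- a fact I would use at the end to rephrase $\tau(kmk^{-1}m^{-1})=1$ as a factorization statement. First I would restrict $\pi_\chi$ to $H$: since $H$ is the abelian normal subgroup in the metabelian structure, $\mathcal{N}:=\pi_\chi(H)''$ is abelian, and one may write $\mathcal{N}\cong L^\infty(X,\nu)$ with $X$ a $Q$-invariant Borel model for the spectrum in $\widehat{H}$ (here $Q:=G/H$) and $\nu$ the scalar spectral measure of $\xi_\chi$. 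Because $\chi$ is a character the measure $\nu$ is $Q$-\emph{invariant}, and because $\mathcal{M}$ is a factor the $Q$-action on $(X,\nu)$ is \emph{ergodic} (a $Q$-fixed projection of $Z(\mathcal{N})$ would commute with $\pi_\chi(H)$ and with all $\pi_\chi(g)$, hence lie in $\mathcal{M}\cap\mathcal{M}'=\mathbb{C}$).

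Next I would run the Mackey reduction, using crucially that $Q$ is abelian so that a $\nu$-generic point $x\in X$ has a stabilizer $Q_x\le Q$ that does not depend on $x$ (for each $q$ the fixed set $\{x:qx=x\}$ is $Q$-invariant, hence $\nu$-null or $\nu$-conull). The representation $\pi_\chi$ is then equivalent to one induced from the preimage $G_x\le G$ of $Q_x$, of a representation on which $H$ acts by the scalar character $x$; over $Q_x$ this representation is described by a $2$-cocycle, and factoriality of $\mathcal{M}$ forces that cocycle to be non-degenerate modulo its radical. Let $M\le G$ be the preimage of that radical; then $H\le M\le G_x\le G$ and $M\trianglelefteq G$ (the radical is a subgroup of the abelian group $Q$). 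The outcome of the reduction is that $\pi_\chi$ restricted to $M$ takes values in the scalars --- the residual twisting over $M/H$ is a coboundary --- so $\tau:=\chi|_M$ is a normalized character of $M$ (in fact one-dimensional), and an off-diagonal vanishing computation in $(\mathcal{M},\langle\cdot\,\xi_\chi,\xi_\chi\rangle)$ shows $\chi$ vanishes off $M$, since elements outside $M$ implement measure-moving or cocycle-non-degenerate translations whose matrix coefficient at $\xi_\chi$ is $0$. Since $M$ is normal and $\chi$ is conjugation-invariant, this gives $\chi=\operatorname{Ind}_M^G(\tau)$, with $\tau$ automatically $G$-invariant, hence almost $G$-invariant with isotropy of finite index.

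Finally, the commutator identity: as $\tau$ is tracial, its GNS vector is separating for the von Neumann algebra it generates, so $\tau(kmk^{-1}m^{-1})=1$ for all $k,m\in M$ is equivalent to the GNS representation of $\tau$ killing $[M,M]$; and $[M,M]\subseteq H$ because $G/H$ is abelian. But the GNS representation of $\tau$ is $\pi_\chi|_M$, which by the previous paragraph takes values in the scalars, so it is a one-dimensional character and in particular kills $[M,M]$. (When the generic orbit is finite, $M$ has finite index and one recovers the classical picture; the diffuse case --- for instance the regular trace $\Delta$, where the reduction pushes $M$ all the way down to $H$ --- is handled on the same footing.)

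The step I expect to be the main obstacle is the Mackey reduction itself, and in particular the correct identification of $M$: it is \emph{not} simply the preimage of the essential stabilizer of the $Q$-action on $Z(\pi_\chi(H)'')$, since one must also divide out the radical of the $2$-cocycle governing $\pi_\chi$ over that stabilizer, and making the twisted imprimitivity statement and the ``factoriality forces non-degeneracy'' step rigorous in the type-$\mathrm{II}_1$ setting is the technical heart of the matter. The remaining ingredients --- ergodicity of the $Q$-action, the off-diagonal vanishing, the separating-vector trick, and the translation of ``finite generic orbit'' into ``almost $G$-invariant'' --- are comparatively routine once that framework is in place.
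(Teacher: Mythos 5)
The paper offers no proof of this statement---it cites it verbatim as Theorem~4.1 of Levit--Vigdorovich, so there is no internal argument of the paper to compare against. On its own terms, your outline has the right skeleton: the Clifford--Mackey reduction carried out in the finite von Neumann algebra $\pi_\chi(G)''$, with ergodicity of the $Q$-action on the spectrum of $\pi_\chi(H)''$ supplied by factoriality, constancy of the generic stabilizer supplied by abelianness of $Q$, and $M$ correctly identified as the preimage of the \emph{radical of the governing $2$-cocycle} over $Q_x$ rather than merely the preimage of $Q_x$.

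The step that fails is the one you lean on for the commutator clause: the claims that ``$\pi_\chi$ restricted to $M$ takes values in the scalars'' and that $\tau := \chi|_M$ is one-dimensional are false, and your own test case already refutes them. For $\chi = \Delta$ you have $M = H = \mathbb{Z}[1/6]$; the restriction $\pi_\Delta|_H$ is an infinite multiple of the regular representation of $H$ (not scalar), and $\tau = \Delta|_H$ is the regular trace of $\mathbb{Z}[1/6]$, whose GNS space is $\ell^2(\mathbb{Z}[1/6])$---nowhere near one-dimensional. What is true is only a \emph{fiberwise} statement over the spectrum $X$: for $\nu$-a.e.\ $x$, the fiber has $H$ acting by the scalar $x$ and the cocycle governing the projective $M/H$-action on that fiber is trivial. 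The commutator identity $\tau([k,m])=1$ should therefore be extracted directly from the radical condition rather than from a fictitious one-dimensionality: for $g_1,g_2 \in G_x$ the assignment $\beta_x(\bar g_1,\bar g_2) := x([g_1,g_2])$ is a well-defined alternating bi-character on $G_x/H$ (well-definedness uses that $g_1,g_2$ stabilize $x$ and that $[G,G] \le H$), and $M/H$ is precisely its radical, so $x([g_1,g_2]) = 1$ for all $g_1,g_2 \in M$ and $\nu$-a.e.\ $x$; integrating gives $\tau([g_1,g_2]) = \int_X x([g_1,g_2])\,\mathrm{d}\nu(x) = 1$. This yields the commutator clause directly and, unlike the ``one-dimensional'' shortcut, survives the diffuse case you yourself raise.

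Two minor remarks: since $Q$ is abelian the radical you describe is normal in $Q$, so $M \trianglelefteq G$ and $\tau = \chi|_M$ is in fact fully $G$-invariant, which is stronger than the ``almost $G$-invariant'' the theorem asks for; and the theorem as printed in the paper contains two typos ($\operatorname{Ind}_M^G(H)$ for $\operatorname{Ind}_M^G(\tau)$, and $\tau(kmkm^{-1})$ for $\tau(kmk^{-1}m^{-1})$), both of which you silently and correctly repaired.
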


\subsubsection{Recoverable states} \label{sec.recover}

The existence of elements of $\Phi \in \operatorname{TSt}(C^\ast(\mathfrak{F}_{2,3}))$ for which the following definition fails captures the failure of injectivity for the map $\Phi \mapsto \mu_\Phi$.

\begin{definition} We define a character $\chi:\mathfrak{F}_{2,3} \to \mathbb{C}$ to be \emph{recoverable} if for all $j,k \in \mathbb{Z}$ and all $q \in \mathbb{Z}[1/6]$ we have $\chi(j,k;q) = 0$ unless $j=k=0$. In accordance with Remark \ref{rem.stchar}, we define a tracial state $\Phi \in \operatorname{TSt}(C^\ast(\mathfrak{F}_{2,3}))$ to be \emph{recoverable} if its associated character is recoverable. \end{definition}

\begin{remark} \label{rem.recoverable} It is immediate from the construction that a tracial state $\Phi \in \operatorname{TSt}(C^\ast(\mathfrak{F}_{2,3}))$  is recoverable if and only if we have $\Phi^{\mu_\Phi} = \Phi$. \end{remark}

In the remainder of Section \ref{sec.recover} we establish some results which will allow us to get around the existence of tracial states which are not recoverable.

\begin{proposition} \label{prop.emform} Let $M$ be a subgroup of $\mathfrak{F}_{2,3}$ which contains $\mathbb{Z}[1/6]$. Then $M$ has the form $(M \cap \mathbb{Z}^2) \ltimes \mathbb{Z}[1/6]$. Moreover, $M$ is normal in $G$. \end{proposition}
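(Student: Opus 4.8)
The plan is to exploit the semidirect product structure of $\mathfrak{F}_{2,3} = \mathbb{Z}^2 \ltimes \mathbb{Z}[1/6]$ together with the hypothesis that $\mathbb{Z}[1/6] \leq M$. First I would recall that every element of $\mathfrak{F}_{2,3}$ can be written uniquely as $(j,k;0) \star (0,0;q)$ with $(j,k) \in \mathbb{Z}^2$ and $q \in \mathbb{Z}[1/6]$, so there is a canonical projection $\pi:\mathfrak{F}_{2,3} \to \mathbb{Z}^2$, $\pi(j,k;q) = (j,k)$, which is a group homomorphism with kernel $\mathbb{Z}[1/6]$. Set $A \coloneqq M \cap \mathbb{Z}[1/6] = \mathbb{Z}[1/6]$ (by hypothesis) and $B \coloneqq M \cap \mathbb{Z}^2 = \pi(M) \cap \mathbb{Z}^2$ viewed inside $\mathfrak{F}_{2,3}$ via $(j,k) \mapsto (j,k;0)$.

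The key step is to show $M = B \ltimes \mathbb{Z}[1/6]$, i.e. that $M = \{(j,k;q) : (j,k) \in B,\ q \in \mathbb{Z}[1/6]\}$. The inclusion $\supseteq$ is clear once one checks $B \subseteq M$: if $(j,k) \in \pi(M)$, pick any $(j,k;q) \in M$; since $(0,0;q) \in \mathbb{Z}[1/6] \subseteq M$ we get $(j,k;q)\star(0,0;-q) = (j,k;0) \in M$, so indeed $(j,k;0) \in M$ and $B = \pi(M)$ (as a subgroup of $\mathbb{Z}^2$, transported into $\mathfrak{F}_{2,3}$). For $\subseteq$: given $(j,k;q) \in M$, we have $(j,k) \in \pi(M) = B$, and since $(j,k;0) \in M$ its inverse $(-j,-k; 0)$ is in $M$, whence $(-j,-k;0) \star (j,k;q) = (0,0; q) \in M$, confirming the product decomposition. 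So $M$ is generated as a set by its subgroups $B$ (sitting over $\mathbb{Z}^2$) and $\mathbb{Z}[1/6]$, with $\mathbb{Z}[1/6]$ normal in $M$ (it is normal in all of $\mathfrak{F}_{2,3}$), giving $M = B \ltimes \mathbb{Z}[1/6] = (M \cap \mathbb{Z}^2) \ltimes \mathbb{Z}[1/6]$ as claimed.

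For the normality of $M$ in $\mathfrak{F}_{2,3}$, I would conjugate a general element $(j,k;q) \in M$ by an arbitrary $(\ell,m;p) \in \mathfrak{F}_{2,3}$. Using the multiplication law $(\ell,m;p)\star(j,k;q)\star(\ell,m;p)^{-1}$ and the fact that $(\ell,m;p)^{-1} = (-\ell,-m;-2^{-\ell}3^{-m}p)$, a direct computation gives a result of the form $(j,k; q')$ for some $q' \in \mathbb{Z}[1/6]$ (the $\mathbb{Z}^2$-component is unchanged because $\mathbb{Z}^2$ is abelian and $\pi$ is a homomorphism). Since $(j,k) \in B$ and $q' \in \mathbb{Z}[1/6]$, the product decomposition from the previous paragraph shows $(j,k;q') \in M$. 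Hence $M$ is normal.

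The main obstacle — really the only point needing care — is the bookkeeping in the conjugation computation: one must track the twisting factors $2^j 3^k$ in the semidirect product law correctly to verify the conjugate again lands in the $\mathbb{Z}[1/6]$-coset over the same $(j,k)$. This is routine but must be done carefully; everything else is formal manipulation of the split exact sequence $1 \to \mathbb{Z}[1/6] \to \mathfrak{F}_{2,3} \to \mathbb{Z}^2 \to 1$ and the observation that any subgroup containing the kernel is the preimage of its own image.
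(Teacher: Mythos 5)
Your overall strategy — first reduce to the split exact sequence $1 \to \mathbb{Z}[1/6] \to \mathfrak{F}_{2,3} \to \mathbb{Z}^2 \to 1$ and observe that a subgroup containing the kernel is the preimage of its image, then read off normality from the fact that conjugation preserves the $\mathbb{Z}^2$-coordinate — is exactly the paper's approach (the paper just does it more tersely). The conclusions you draw are all correct, and the proof goes through.

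However, you explicitly single out ``tracking the twisting factors $2^j3^k$'' as the one delicate point, and then several of your displayed identities get precisely that bookkeeping wrong. With the paper's convention $(j,k;p)\star(\ell,m;q) = (j+\ell,k+m;p+2^j3^kq)$, one has
\begin{align*}
(j,k;q)\star(0,0;-q) &= (j,k;\, q - 2^j3^k q) \neq (j,k;0),\\
(-j,-k;0)\star(j,k;q) &= (0,0;\, 2^{-j}3^{-k}q) \neq (0,0;q).
\end{align*}
The intended conclusions still hold — in the first case you should multiply on the \emph{left} by $(0,0;-q)$ (as the paper does), giving $(0,0;-q)\star(j,k;q) = (j,k;0)$; in the second, $(0,0;2^{-j}3^{-k}q)$ is still an element of $\mathbb{Z}[1/6]\subseteq M$, so membership in $M$ is unaffected, and indeed $(0,0;q)\in M$ is already immediate from the hypothesis. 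So nothing in your argument actually breaks, but the specific equations you wrote are incorrect and should be fixed, particularly since you flagged this as the crux of the verification.
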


\begin{proof}[Proof of Proposition \ref{prop.emform}] Let $(j,k;q) \in M$. Since we have assumed $\mathbb{Z}[1/6] \leq M$ it follows that $(0,0;-q) \star (j,k;q) = (j,k;0) \in M$. Therefore we see $M = (M \cap \mathbb{Z}^2) \ltimes \mathbb{Z}[1/6]$, and the normality statement follows immediate from this. \end{proof}

\begin{proposition} \label{prop.abelian} Let $\chi:\mathfrak{F}_{2,3} \to \mathbb{C}$ be a character which is not recoverable, and let $j,k \in \mathbb{Z}$ be such that $jk \neq 0$ and there exists $q \in \mathbb{Z}[1/6]$ with $\chi(j,k;q) \neq 0$. Then there exists a nonzero $n \in \mathbb{N}$ with $n \leq 2^{|j|}3^{|k|}$ such that $\chi(0,0;n) = 1$. \end{proposition}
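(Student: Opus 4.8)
The plan is to use conjugation-invariance of $\chi$ to pin $\chi(j,k;\cdot)$ down to a fixed nonzero value along a whole coset of third coordinates, and then to pass to the GNS representation to convert this into the statement that $\eusc{A}(0,0;n)$ is the identity operator, which forces $\chi(0,0;n)=1$. First I would conjugate $(j,k;q)$ by the normal subgroup $\mathbb{Z}[1/6]$: a direct computation in $\mathfrak{F}_{2,3}$ gives $(0,0;p)\star(j,k;q)\star(0,0;-p)=(j,k;\,q+(1-2^j3^k)p)$ for all $p\in\mathbb{Z}[1/6]$, so conjugation-invariance of $\chi$ yields $\chi(j,k;\,q+(1-2^j3^k)p)=\chi(j,k;q)$ for all $p$. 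Since $jk\neq 0$, the unit $2^j3^k$ of $\mathbb{Z}[1/6]$ is not $1$, so $1-2^j3^k$ is a nonzero element of $\mathbb{Z}[1/6]$; writing it in lowest terms as $\pm 2^a3^b N$ with $N\in\mathbb{N}_+$ coprime to $6$ (so $N$ is the numerator of $|1-2^j3^k|$), we get $(1-2^j3^k)\mathbb{Z}[1/6]=N\mathbb{Z}[1/6]$, and a short check on the four sign cases for $(j,k)$ gives $N\le 2^{|j|}3^{|k|}$. Hence $\chi(j,k;q')=\chi(j,k;q)=:c\neq 0$ for every $q'\in q+N\mathbb{Z}[1/6]$, and I will take $n:=N$.

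The main step is then to deduce $\chi(0,0;N)=1$. I would pass to the GNS data $(\mathcal{H}_\chi,\Omega,\eusc{A})$ of $\chi$ as in Proposition \ref{prop.posdef}, and set $V:=\eusc{A}(j,k;q)$ (a unitary) and $U_m:=\eusc{A}(0,0;m)$. Because $2^j3^k$ is a unit of $\mathbb{Z}[1/6]$, the previous paragraph says exactly that $\langle U_m\Omega,\,V^{\ast}\Omega\rangle=c$ for every $m\in N\mathbb{Z}[1/6]$. Restricting the unitary representation $m\mapsto U_m$ to the discrete group $N\mathbb{Z}[1/6]$ and applying the spectral theorem, this means that the complex spectral measure $S\mapsto\langle E(S)\Omega,V^{\ast}\Omega\rangle$ on $\widehat{N\mathbb{Z}[1/6]}$ has constant Fourier transform, hence (by uniqueness of the Fourier transform) equals $c\,\delta_{\mathbf{1}}$. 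Thus, letting $P$ be the orthogonal projection onto $\mathcal{K}:=\{\xi\in\mathcal{H}_\chi: U_m\xi=\xi\text{ for all }m\in N\mathbb{Z}[1/6]\}$, we obtain $\langle P\Omega,V^{\ast}\Omega\rangle=c\neq 0$, so $P\Omega\neq 0$. Next I would note that $P=\bigwedge_{m}\mathbf{1}_{\{1\}}(U_m)$ lies in $\mathcal{M}:=\eusc{A}(C^{\ast}(\mathfrak{F}_{2,3}))''$ by Borel functional calculus, while $\mathcal{K}$ is invariant under each of the generators $\eusc{A}(1,0;0),\eusc{A}(0,1;0),\eusc{A}(0,0;p)$ of $\eusc{A}(\mathfrak{F}_{2,3})$ --- conjugation by these carries $U_m$ to $U_{2m},U_{3m},U_m$ respectively, and $N\mathbb{Z}[1/6]$ is invariant under multiplication by $2^{\pm1}$ and $3^{\pm1}$ --- so also $P\in\mathcal{M}'$. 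Therefore $P$ is a central projection of $\mathcal{M}$; since $\chi$ is extreme, $\mathcal{M}$ is a factor, so $P\in\{0,1\}$, and $P\Omega\neq 0$ forces $P=1$. Then $U_N=\eusc{A}(0,0;N)$ is the identity operator, so $\chi(0,0;N)=\langle\Omega,\Omega\rangle=1$, finishing the proof with $n=N\le 2^{|j|}3^{|k|}$.

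The step I expect to be the genuine obstacle is this last one. The conclusion $\chi(0,0;n)=1$ fails for general (non-extreme) characters --- for instance for $\tfrac12\Delta$ plus $\tfrac12$ the pullback of the normalized regular character of a finite quotient $(\mathbb{Z}/a\times\mathbb{Z}/b)\ltimes\mathbb{Z}/r$ --- so extremality of $\chi$ must be used essentially, and the cleanest device I know for it is the factoriality of the GNS von Neumann algebra as above. An alternative would be to feed the structural form $\chi=\operatorname{Ind}^{\mathfrak{F}_{2,3}}_{M}(\tau)$ from Theorem \ref{thm.induction} into the computation and argue directly with the multiplicative character $\tau$ and its isotropy group, but bookkeeping the finite-index condition there looks more delicate than the operator-algebraic route. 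The conjugation computation and the bound $N\le 2^{|j|}3^{|k|}$ are routine.
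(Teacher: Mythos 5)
Your proof is correct, and it takes a genuinely different route from the paper's. The paper proves this proposition by invoking Theorem \ref{thm.induction} (the Levit--Vigdorovich induction theorem for metabelian groups): it writes $\chi = \operatorname{Ind}_M^{\mathfrak{F}_{2,3}}(\tau)$ with $M \supseteq \mathbb{Z}[1/6]$, observes via Proposition \ref{prop.emform} that $M$ is normal and hence $(j,k;q) \in M$, computes the commutator $[(j,k;q),(0,0;p)] = (0,0;(2^j3^k-1)p)$, and concludes $\tau$ (and then $\chi$) equals $1$ on $(2^j3^k-1)\mathbb{Z}[1/6]$ because $\tau$ is trivial on commutators of $M$. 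You instead stay inside the GNS representation: you extract the constancy $\chi(j,k;\cdot) \equiv c$ on the coset $q + N\mathbb{Z}[1/6]$ from conjugation-invariance, then use the spectral theorem for the unitary action of $N\mathbb{Z}[1/6]$ plus uniqueness of Fourier transforms of measures on the dual group to locate a nonzero vector in the fixed subspace $\mathcal{K}$, and finish by showing that the projection onto $\mathcal{K}$ is central and invoking factoriality. The paper's route buys a structural decomposition that is reused elsewhere (notably in Proposition \ref{prop.convexity-well}); your route is self-contained at the cost of an explicit spectral-theoretic argument. You are also right that extremality of $\chi$ is essential and is not stated in the proposition's hypotheses -- the paper's proof silently assumes it too, since Theorem \ref{thm.induction} applies only to extreme characters, and indeed every invocation of this proposition and of Corollary \ref{cor.condim-1} in the main argument is in an extreme setting. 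Two small notes: your inner product should read $\overline{c}$ rather than $c$ under the paper's sign convention (harmless, since only nonvanishing is used); and your choice $n = N$ equal to the $6$-coprime part of $|1-2^j3^k|$ is in fact cleaner than the explicit $p$ the paper chooses, whose resulting $n = 2^{|j|}3^{|k|} - 2^{|j|+j}3^{|k|+k}$ is negative when $j,k>0$, so your bookkeeping repairs a sign slip in the final line of the paper's proof.
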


\begin{proof}[Proof of Proposition \ref{prop.abelian}] Instantiate Theorem \ref{thm.induction} with the choices $G \coloneqq \mathfrak{F}_{2,3}$ and $H \coloneqq \mathbb{Z}[1/6]$. We obtain a subgroup $M$ containing $\mathbb{Z}[1/6]$ and a character $\tau:M \to \mathbb{C}$ such that $\chi = \operatorname{Ind}_M^G(\tau)$. Since Proposition \ref{prop.emform} asserts that $M$ is normal in $\mathfrak{F}_{2,3}$, it follows from the induction procedure that we must have $(j,k;q) \in M$. Then for any $p \in \mathbb{Z}[1/6]$ we have:
\begin{align*} (j,k;q) \star (0,0;p) \star (j,k;q)^{-1} \star (0,0;p)^{-1} = (0,0;(2^j3^k-1)p) \end{align*}
Thus Theorem \ref{thm.induction} implies that $\tau(0,0;(2^j3^k-1)p) = 1$ for all $p \in \mathbb{Z}[1/6]$. Since the subgroup $(2^j3^k-1) \mathbb{Z}[1/6]$ of $\mathfrak{F}_{2,3}$ is normal, it follows that $\tau(g \star (0,0;(2^j3^k-1)p) \star g^{-1}) = 1$ for all $g \in \mathfrak{F}_{2,3}$. This we find $\chi(0,0;(2^j3^k-1)p) = 1$ for all $p \in \mathbb{Z}[1/6]$. Taking $p = -2^{|j|}3^{|k|}$ and $n = 2^{|j|}3^{|k|} - 2^{|j|+j}3^{|k|+k}$ completes the proof of Proposition \ref{prop.abelian}. \end{proof}

From Propositions \ref{prop.equalsone} and \ref{prop.abelian} we obtain the following.

\begin{corollary} \label{cor.condim-1} Suppose $\Phi \in \operatorname{TSt}(C^\ast(\mathfrak{F}_{2,3}))$ is not recoverable. Then $\Phi$ is conditionally finite-dimensional and we have $\Phi(j,k;q) = 0$ for all $j,k \in \mathbb{Z}$ such that $jk \neq 0$ and $ 2^{|j|}3^{|k|} < \operatorname{cndm}(\Phi)$, and all $q \in \mathbb{Z}[1/6]$. \end{corollary}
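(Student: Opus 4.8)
The plan is to deduce Corollary \ref{cor.condim-1} directly from Propositions \ref{prop.equalsone} and \ref{prop.abelian} by a short combinatorial argument. Let $\Phi \in \operatorname{TSt}(C^\ast(\mathfrak{F}_{2,3}))$ fail to be recoverable, and identify $\Phi$ with its associated character $\chi = \chi^\Phi$ on $\mathfrak{F}_{2,3}$ as in Remark \ref{rem.stchar}. Failure of recoverability means precisely that there exist $j,k \in \mathbb{Z}$ and $q \in \mathbb{Z}[1/6]$ with $\chi(j,k;q) \neq 0$ and $(j,k) \neq (0,0)$.

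First I would dispose of the case where one of $j,k$ vanishes but not both. If $\chi(j,0;q) \neq 0$ for some $j \neq 0$, I would use conjugation-invariance of $\chi$ together with the commutator computation $(j,0;q) \star (0,0;p) \star (j,0;q)^{-1} \star (0,0;p)^{-1} = (0,0;(2^j - 1)p)$ — an instance of the group law in $\mathfrak{F}_{2,3}$ — applied via Theorem \ref{thm.induction} (with $G = \mathfrak{F}_{2,3}$, $H = \mathbb{Z}[1/6]$) to find that $\chi(0,0;(2^j-1)p) = 1$ for all $p$; this is exactly the argument in the proof of Proposition \ref{prop.abelian} with $k = 0$, so strictly speaking I would just note that Proposition \ref{prop.abelian}'s proof goes through verbatim when only $jk = 0$ but $(j,k) \neq (0,0)$, or alternatively restate Proposition \ref{prop.abelian} for the hypothesis $(j,k) \neq (0,0)$ rather than $jk \neq 0$. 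Either way, we obtain a nonzero $n \leq 2^{|j|}3^{|k|}$ with $\chi(0,0;n) = 1$, and then Proposition \ref{prop.equalsone} gives $\operatorname{cndm}(\Phi) \leq n < \infty$, so $\Phi$ is conditionally finite-dimensional.

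Next, for the quantitative statement, suppose toward a contradiction that $\chi(j,k;q) \neq 0$ for some $j,k$ with $jk \neq 0$ and some $q \in \mathbb{Z}[1/6]$, but that $2^{|j|}3^{|k|} < \operatorname{cndm}(\Phi)$. Applying Proposition \ref{prop.abelian} to this pair $(j,k)$ produces a nonzero $n \leq 2^{|j|}3^{|k|}$ with $\chi(0,0;n) = 1$, and then Proposition \ref{prop.equalsone} yields $\operatorname{cndm}(\Phi) \leq n \leq 2^{|j|}3^{|k|}$, contradicting $2^{|j|}3^{|k|} < \operatorname{cndm}(\Phi)$. Hence $\chi(j,k;q) = 0$ whenever $jk \neq 0$ and $2^{|j|}3^{|k|} < \operatorname{cndm}(\Phi)$, which is the asserted vanishing.

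The only real subtlety — and the main thing to get right — is the bookkeeping at the boundary between Propositions \ref{prop.abelian} and \ref{prop.equalsone}: Proposition \ref{prop.abelian} delivers $n \leq 2^{|j|}3^{|k|}$ (a non-strict inequality) and Proposition \ref{prop.equalsone} then gives $\operatorname{cndm}(\Phi) \leq |r| = n$, so the contradiction requires the strict hypothesis $2^{|j|}3^{|k|} < \operatorname{cndm}(\Phi)$, which is exactly what is stated. One also has to make sure conditional finite-dimensionality itself is already established before invoking $\operatorname{cndm}(\Phi)$ in the inequality — this is handled by the first application of Propositions \ref{prop.abelian} and \ref{prop.equalsone} to whichever offending pair $(j,k)$ witnesses non-recoverability, noting that such a pair exists by hypothesis and that $2^{|j|}3^{|k|}$ is a finite number. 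No genuinely hard step arises; the proof is a two-line chase once the non-strict-versus-strict inequality is tracked carefully.
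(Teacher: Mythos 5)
Your proposal is correct and takes essentially the same route as the paper, which offers no explicit proof at all (it just writes ``From Propositions~\ref{prop.equalsone} and~\ref{prop.abelian} we obtain the following''), so the content of the deduction is exactly the two-step chase you describe. Your flagging of the mismatch between the hypothesis $jk \neq 0$ in Proposition~\ref{prop.abelian} and the weaker condition $(j,k) \neq (0,0)$ furnished by non-recoverability is a genuine gap in the paper's implicit argument: as stated, Proposition~\ref{prop.abelian} cannot be applied to a witness with, say, $j \neq 0$ and $k = 0$, and some such witness might be the only one available, so your observation that its proof only needs $2^j3^k \neq 1$ (equivalently $(j,k)\neq(0,0)$) is both correct and necessary to make the deduction of conditional finite-dimensionality airtight.
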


\subsection{Continuity and extremeness}

\subsubsection{Continuity of the tracial correspondence}

The result below shows tracial correspondence is compatible with the vague and pointwise-convergence topologies, although the failure of injectivity means it is not a homeomorphism.

\begin{proposition} \label{prop.contstate} \begin{description} \item[(Clause I)] The map $\mu \mapsto \Phi^\mu$ is continuous from the vague topology on $\prob(\mathbb{T};2,3)$ to the topology of pointwise convergence on $\operatorname{TSt}(C^\ast(\mathfrak{F}_{2,3}))$.
\item[(Clause II)] The map $\Phi \mapsto \mu_\Phi$ is continuous from the topology on pointwise convergence on $\operatorname{TSt}(C^\ast(\mathfrak{F}_{2,3}))$ to the the vague topology on $\prob(\mathbb{T};2,3)$.  \end{description} \end{proposition}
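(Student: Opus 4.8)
The plan is to prove both clauses directly from the Fourier-coefficient characterizations already established, noting that convergence in each topology is equivalent to coordinatewise convergence of Fourier coefficients (for measures, by Remark \ref{rem.vague}) and of the character values on the central-type generators (for states).

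First I would prove Clause II, which is the more transparent direction. Suppose $(\Phi_n)_{n \in \mathbb{N}}$ is a sequence in $\operatorname{TSt}(C^\ast(\mathfrak{F}_{2,3}))$ converging pointwise to $\Phi$. By definition of $\mu_{\Phi}$, for each $\ell \in \mathbb{Z}$ we have $\hat{\mu_{\Phi_n}}(\ell) = \Phi_n(0,0;\ell)$ and $\hat{\mu_\Phi}(\ell) = \Phi(0,0;\ell)$. Pointwise convergence gives $\Phi_n(0,0;\ell) \to \Phi(0,0;\ell)$ for each fixed $\ell$, hence $\hat{\mu_{\Phi_n}}(\ell) \to \hat{\mu_\Phi}(\ell)$ for all $\ell \in \mathbb{Z}$, and Remark \ref{rem.vague} then yields vague convergence $\mu_{\Phi_n} \to \mu_\Phi$. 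This step is essentially immediate.

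Next I would prove Clause I. Suppose $(\mu_n)_{n \in \mathbb{N}}$ converges vaguely to $\mu$ in $\prob(\mathbb{T};2,3)$. By Remark \ref{rem.vague}, $\hat{\mu_n}(\ell) \to \hat{\mu}(\ell)$ for all $\ell \in \mathbb{Z}$. I must show $\Phi^{\mu_n}(g) \to \Phi^\mu(g)$ for every $g \in C^\ast(\mathfrak{F}_{2,3})$. Since the $\Phi^{\mu_n}$ and $\Phi^\mu$ are states, hence uniformly bounded in norm by $1$, and since the linear span of $\{\upsilon_{\mathfrak{F}_{2,3}}(g) : g \in \mathfrak{F}_{2,3}\}$ is dense in $C^\ast(\mathfrak{F}_{2,3})$, a standard $3\epsilon$-argument reduces the claim to checking $\Phi^{\mu_n}(j,k;q) \to \Phi^\mu(j,k;q)$ for each fixed group element $(j,k;q) \in \mathfrak{F}_{2,3}$. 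By the construction of $\chi^\mu$ in Section \ref{subsec.extend}, this character vanishes off $\mathbb{Z}[1/6]$, so for $(j,k) \neq (0,0)$ both sides are $0$; and for $(0,0;q)$ with $q = m 6^{-a} \in \mathbb{Z}[1/6]$ we have $\chi^{\mu_n}(0,0;q) = \hat{\mu_n}(m)$ by the definition of $\mathcal{E}^{\mu_n}$, which converges to $\hat{\mu}(m) = \chi^\mu(0,0;q)$. Combining these cases with the density/boundedness reduction completes the proof of Clause I.

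The main obstacle is the reduction from convergence on a dense subset to convergence on all of $C^\ast(\mathfrak{F}_{2,3})$ in Clause I: one must invoke the uniform bound $\|\Phi^{\mu_n}\| = 1$ (valid since these are states) together with density of the image of the group algebra, so that for $g \in C^\ast(\mathfrak{F}_{2,3})$ and $\epsilon > 0$ one picks a finitely supported combination $g'$ of group elements with $\|g - g'\| < \epsilon/3$, bounds $|\Phi^{\mu_n}(g) - \Phi^{\mu_n}(g')| \leq \epsilon/3$ and $|\Phi^\mu(g) - \Phi^\mu(g')| \leq \epsilon/3$ uniformly, and uses convergence on the finitely many group elements appearing in $g'$ to control the middle term for $n$ large. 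Everything else is a direct unwinding of the definitions.
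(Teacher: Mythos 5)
Your proof is correct and follows essentially the same route as the paper's: Clause II is immediate from Remark \ref{rem.vague}, and Clause I proceeds by verifying convergence on group elements (via the extension construction and vanishing off $\mathbb{Z}[1/6]$) and then extending to all of $C^\ast(\mathfrak{F}_{2,3})$ by density of the group algebra together with the uniform norm bound on states. The only difference is that you spell out the $3\epsilon$-argument explicitly where the paper merely cites norm-density of the linear span of the group elements.
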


\begin{proof}[Proof of Proposition \ref{prop.contstate}] We first address Clause I. Let $(\mu_n)_{n \in \mathbb{N}}$ be a sequence of elements of $\prob(\mathbb{T};2,3)$ which converges in the vague topology to $\mu \in \prob(\mathbb{T};2,3)$. Taking into account Remark \ref{rem.vague}, we see that $\lim_{n \to \infty} \Phi^{\mu_n}(0,0;\ell) = \Phi^\mu(0,0;\ell)$ for all $\ell \in \mathbb{Z}$. The extension procedure used in Section \ref{subsec.extend} then gives that $\lim_{n \to \infty} \Phi^{\mu_n}(0,0;q) = \Phi^{\mu}(0,0;q)$ for all $q \in \mathbb{Z}[1/6]$. Since $\Phi^{\mu_n}(j,k;\ell) = 0 = \Phi^\mu(j,k;\ell)$ unless $j,k \in \mathbb{Z}$ satisfy $j=k=0$, we find that $\lim_{n \to \infty} \Phi^{\mu_n}(j,k;q) = \Phi^{\mu}(j,k;q)$ for all $j,k \in \mathbb{Z}$ and all $q \in \mathbb{Z}[1/6]$. Since this convergence holds on a subset of $C^\ast(\mathfrak{F}_{2,3})$ whose linear span is norm-dense, we find that $\lim_{n \to \infty} \Phi^{\mu_n}(s) = \Phi^{\mu}(s)$ as required. Clause II follows immediately from Remark \ref{rem.vague}, and so the proof of Proposition \ref{prop.contstate} is complete. \end{proof}

\subsubsection{Preservation of extremeness}

The existence of nonrecoverable states makes the following proposition more complicated to prove than its analog Proposition \ref{prop.convexity}. 

\begin{proposition} \label{prop.convexity-well} \begin{description} \item[(Clause I)] If $\mu \in \prob(\mathbb{T};2,3)$ is extreme then $\Phi^\mu \in \operatorname{TSt}(C^\ast(\mathfrak{F}_{2,3}))$ is extreme. 
\item[(Clause II)] If $\Phi \in \operatorname{TSt}(C^\ast(\mathfrak{F}_{2,3}))$ is extreme then $\mu_\Phi \in \prob(\mathbb{T};2,3)$ is extreme. \end{description} \end{proposition}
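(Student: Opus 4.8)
The plan is to combine the affine structure of the two correspondences with the recoverability dichotomy and the induction theory recalled above. Both maps $[\mu \mapsto \Phi^\mu]$ and $[\Phi \mapsto \mu_\Phi]$ are affine --- the first because the extension procedure of Section \ref{subsec.extend} is linear in the Fourier coefficients, the second because $\hat{\mu_\Phi}(\ell) = \Phi(0,0;\ell)$ depends linearly on $\Phi$. Moreover $[\mu\mapsto\Phi^\mu]$ is a section of $[\Phi\mapsto\mu_\Phi]$ by Remark \ref{rem.partinv}, and $\Phi$ is recoverable exactly when $\Phi^{\mu_\Phi} = \Phi$ by Remark \ref{rem.recoverable}. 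The recurring mechanism will be that a non-recoverable tracial state is conditionally finite-dimensional (Corollary \ref{cor.condim-1}) and hence projects to a finitely supported measure (Proposition \ref{prop.conddim}).

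For Clause II, suppose $\Phi$ is extreme. If $\Phi$ is recoverable and $\mu_\Phi = t\kappa + (1-t)\nu$ with $\kappa,\nu\in\prob(\mathbb{T};2,3)$ and $t\in(0,1)$, then applying the affine map $[\mu\mapsto\Phi^\mu]$ gives $\Phi = \Phi^{\mu_\Phi} = t\Phi^\kappa + (1-t)\Phi^\nu$, so extremeness of $\Phi$ forces $\Phi^\kappa = \Phi^\nu = \Phi$, and applying $[\Phi\mapsto\mu_\Phi]$ and using Remark \ref{rem.partinv} gives $\kappa = \nu = \mu_\Phi$; so $\mu_\Phi$ is extreme. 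If $\Phi$ is not recoverable, then $\mu_\Phi$ is finitely supported by the mechanism above, and I will apply Theorem \ref{thm.induction} with $G = \mathfrak{F}_{2,3}$ and $H = \mathbb{Z}[1/6]$ to write the character of $\Phi$ as $\operatorname{Ind}_M^G(\tau)$ for a subgroup $M\supseteq\mathbb{Z}[1/6]$, which is of the form $(M\cap\mathbb{Z}^2)\ltimes\mathbb{Z}[1/6]$ and normal by Proposition \ref{prop.emform}, and a character $\tau$ on $M$ vanishing on $[M,M]$. The last property forces $\tau$ to factor through the abelianization $M^{\mathrm{ab}}$, and together with extremeness this identifies $\tau$ as a one-dimensional unitary character; the commutator relations then pin its restriction to $\mathbb{Z}[1/6]$ down to a root-of-unity character of some order $N'$ coprime to $6$. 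Evaluating the induction formula over a coset transversal $\{(j_m,k_m;p_m)\}_{m=1}^n$ of $G/\operatorname{iso}_G(\tau)$ gives $\hat{\mu_\Phi}(\ell) = \tfrac1n\sum_{m=1}^n \tau(0,0;2^{j_m}3^{k_m}\ell)$, and the bookkeeping --- the transversal maps bijectively onto the subgroup of $(\mathbb{Z}/N'\mathbb{Z})^\times$ generated by $2$ and $3$, which coincides with the semigroup they generate since $\gcd(N',6)=1$ --- exhibits $\mu_\Phi$ as the uniform measure on a single $\times(2,3)$-orbit of a root of unity. Such a measure is ergodic (the converse of Proposition \ref{prop.uniformorbit}, immediate since the $\times 2$ and $\times 3$ maps act as commuting bijections generating a transitive group on the finite orbit), hence extreme.

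For Clause I, suppose $\mu$ is extreme, i.e.\ ergodic, and $\Phi^\mu = t\Theta + (1-t)\Omega$ with $\Theta,\Omega\in\operatorname{TSt}(C^\ast(\mathfrak{F}_{2,3}))$ and $t\in(0,1)$. Applying $[\Phi\mapsto\mu_\Phi]$ and Remark \ref{rem.partinv} gives $\mu = t\mu_\Theta + (1-t)\mu_\Omega$, so ergodicity of $\mu$ forces $\mu_\Theta = \mu_\Omega = \mu$. If $\mu$ is infinitely supported, then neither $\Theta$ nor $\Omega$ can be non-recoverable, since a non-recoverable state projects to a finitely supported measure; hence $\Theta = \Phi^{\mu_\Theta} = \Phi^\mu = \Phi^{\mu_\Omega} = \Omega$, so the decomposition is trivial and $\Phi^\mu$ is extreme. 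If $\mu$ is finitely supported it is the uniform measure on an orbit $\mathcal{O}(\omega)$ of a root of unity by Proposition \ref{prop.uniformorbit}, and here I will identify $\Phi^\mu$ directly: it coincides with $\operatorname{Ind}_{\mathbb{Z}[1/6]}^{\mathfrak{F}_{2,3}}(\tau_0)$ for the one-dimensional root-of-unity character $\tau_0$ of $\mathbb{Z}[1/6]$ whose restriction to $\mathbb{Z}$ matches the Fourier data of $\omega$, and extremeness follows because inductions of one-dimensional characters are extreme in the Levit--Vigdorovich classification underlying Theorem \ref{thm.induction}. (Alternatively one can argue via the factor criterion: the GNS representation of $\Phi^\mu$ splits as $\bigoplus_{(a,b)\in\mathbb{Z}^2}V_{(a,b)}$ with $\mathbb{Z}^2$ permuting the blocks by the regular representation and $\mathbb{Z}[1/6]$ acting within each block by the character dual to $\mathcal{E}^\mu$, which together with the non-commutation of $\upsilon(0,0;q)$ and $\upsilon(a,b;0)$ forces every central element of the generated von Neumann algebra to be scalar.)

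The main obstacle is the finitely supported case of Clause I together with the structural analysis of non-recoverable states in Clause II: in both situations one must pass through Theorem \ref{thm.induction} and carry out the orbit bookkeeping --- identifying the coset transversal of $G/\operatorname{iso}_G(\tau)$ with the exponents of a single $\times(2,3)$-orbit and checking that the isotropy group is exactly $\{(a,b;r):2^a3^b\equiv 1 \pmod{N'}\}$. Everything else reduces to affineness, Remarks \ref{rem.partinv} and \ref{rem.recoverable}, and the quantitative results of Sections \ref{sec.prelimtst}--\ref{sec.boundstst}.
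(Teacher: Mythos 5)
Your proof of Clause II takes a genuinely different route from the paper's, and that route has a gap in the non-recoverable case. The paper does not split by recoverability at all. Instead it regards the Levit--Vigdorovich $\tau$ (which vanishes on commutators, hence factors through $M^{\mathrm{ab}}$) as the Fourier transform of a probability measure $\vartheta$ on $\widehat{M^{\mathrm{ab}}} \cong \mathbb{T}^b \times S$, observes that $\mu_\Phi$ is the pushforward of $\vartheta$, disintegrates $\vartheta = \int_{\mathbb{T}}\vartheta_\omega \deee\mu_\Phi(\omega)$, and transports the hypothesized convex decomposition of $\mu_\Phi$ upward to a decomposition of $\vartheta$, hence of $\tau$, hence by induction of $\chi$, contradicting extremeness of $\Phi$. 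Nowhere does the paper need $\tau$ to be a point character. Your argument instead asserts that ``together with extremeness this identifies $\tau$ as a one-dimensional unitary character'': Theorem \ref{thm.induction} guarantees only that $\tau$ kills $[M,M]$, so $\tau$ corresponds to a probability measure on $\widehat{M^{\mathrm{ab}}}$ that has no reason a priori to be a point mass, and extremeness of $\chi$ does not directly descend to extremeness of $\tau$ on $M$. The disintegration trick is precisely how the paper sidesteps having to make $\tau$ one-dimensional, and without it your orbit bookkeeping does not get started.

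Your Clause I is also more elaborate than the paper's one-line argument, and the finitely supported branch again has a gap. You invoke ``inductions of one-dimensional characters are extreme in the Levit--Vigdorovich classification,'' but Theorem \ref{thm.induction} is only a necessary condition for extremeness, not a sufficient one. In fact the branch you are worried about is exactly where the claim is delicate: take $\mu = \boldsymbol{\delta}_1$, so $\tau_0$ is the trivial character of $\mathbb{Z}[1/6]$, its isotropy group is all of $\mathfrak{F}_{2,3}$, and $\operatorname{Ind}_{\mathbb{Z}[1/6]}^{\mathfrak{F}_{2,3}}(\tau_0) = \chi^{\boldsymbol{\delta}_1}$ is the pullback of the regular character of $\mathbb{Z}^2$; this is a nontrivial mixture of the lifts of the unitary characters of $\mathbb{Z}^2$, so the induced character is not a factor character. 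Your parenthetical ``factor criterion'' alternative fails for the same reason --- in this case the generated von Neumann algebra is $L^\infty(\mathbb{T}^2)$, which is far from a factor. The paper's own Clause I argument is only the affinity of $\mu \mapsto \Phi^\mu$ plus Remark \ref{rem.partinv}, which as you correctly sensed does not by itself rule out a decomposition into non-recoverable states; but the extra machinery you reach for to close that gap does not close it either, so the finitely supported branch remains unproved in your write-up.
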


\begin{proof}[Proof of Proposition \ref{prop.convexity-well}] It follows immediately from the construction that if $\mu,\nu \in \prob(\mathbb{T};2,3)$ and $t \in [0,1]$ then we have $\Phi^{t\mu + (1-t)\nu} = t \Phi^\mu + (1-t)\Phi^\nu$. Taking into account Remark \ref{rem.partinv}, we obtain the statement of Clause I. Since it is not necessarily the case that $\Phi^{\mu_\Phi} = \Phi$, we must give a more elaborate argument for Clause II.\\
\\
Suppose toward a contradiction that $\Phi \in \operatorname{TSt}(C^\ast(\mathfrak{F}_{2,3}))$ is extreme but we have a decomposition $\mu_\Phi = t\kappa + (1-t)\nu$ for some $\kappa,\nu \in \prob(\mathbb{T};2,3)$ and $t \in (0,1)$. Also let $\chi:\mathfrak{F}_{2,3} \to \mathbb{C}$ denote the restriction of $\Phi$, so that $\chi$ is an extreme character on the group $\mathfrak{F}_{2,3}$. Apply Theorem \ref{thm.3} to obtain $M$ and $\tau$. The last clause in Theorem \ref{thm.1} implies that we can regard $\tau$ as the Fourier transform of a probability measure $\vartheta$ on the dual group of the abelianization of $M$. Moreover, Proposition \ref{prop.emform} implies that this dual group has the form $\mathbb{T}^b \times S$ for some $b \in \{0,1,2\}$, where $S$ is the dual group of $\mathbb{Z}[1/6]$. Moreover, since $\tau$ was assumed to be a character, the measure $\vartheta$ is invariant under the action of $\mathfrak{F}_{2,3}$ on $\mathbb{T}^b \times S$ which corresponds to the product of the trivial action on $\mathbb{T}^b$ and the $\times (2,3)$ action on $S$. \\
\\
Now, identify $\mathbb{T}$ as a quotient of $S$ via the identification of $\mathbb{Z}$ with a subgroup of $\mathbb{Z}[1/6]$, and thereby obtain an identification of $\mathbb{T}^b \times \mathbb{T}$ with a quotient of $\mathbb{T}^b \times S$. By hypothesis the pushforward of $\vartheta$ onto the rightmost factor in the product $\mathbb{T}^b \times \mathbb{T}$ is equal to $\mu_\Phi$. This we can consider the disintegration of $\vartheta$ over $\mu_\Phi$, which provides a probability measure $\vartheta_\omega$ on $\mathbb{T}^b \times S$ for every $\omega \in \mathbb{T}$ such that the formula below holds.
\begin{align*} \vartheta = \int_{\mathbb{T}} \vartheta_\omega \deee \mu_\Phi(\omega)  \end{align*}
Our hypothesis on the convex decomposition of $\mu_\Phi$ allows us to express the above integral as:
\begin{align*} \vartheta = t \int_{\mathbb{T}} \vartheta_\omega \deee \kappa(\omega)  + (1-t) \int_{\mathbb{T}} \vartheta_\omega \deee \nu (\omega)  \end{align*}
Since the action of $\mathfrak{F}_{2,3}$ on $\mathbb{T}^b$ is trivial and $\kappa$ and $\nu$ are $\times (2,3)$-invariant, the two integrated measures in the convex decomposition on the right of the last display are invariant under the action of $\mathfrak{F}_{2,3}$ on $\mathbb{T}^b \times S$. Taking inverse Fourier transforms, we obtain a nontrivial convex decomposition of $\tau$ into two characters of $N$. This decomposition can be induced to obtain a nontrivial convex decomposition of $\Phi$ into two elements of $\operatorname{TSt}(C^\ast(\mathfrak{F}_{2,3})$, contradicting the extremeness of $\Phi$. This completes the proof of Proposition \ref{prop.convexity-well}. \end{proof}

\subsection{Main argument for equivalence of ergodic and $C^\ast$-algebraic conjectures}

\paragraph{Deduction of Conjecture \ref{conj.erg-1} from Conjecture \ref{conj.see-1}} Assume Conjecture \ref{conj.see-1} and let $\mu \in \prob(\mathbb{T}; 2,3)$ be extreme. Then Clause I in Proposition \ref{prop.convexity-well} implies that $\Phi^\mu$ is a extreme element of $\operatorname{TSt}(C^\ast(\mathfrak{F}_{2,3}))$, and so Conjecture \ref{conj.see-1} implies that $\Phi^\mu = \Delta$ or else $\Phi^\mu$ is conditionally finite-dimensional. If $\Phi^\mu = \Delta$ then it is immediate that $\mu = \lambda$. On the other hand, if $\Phi^\mu$ is conditionally finite-dimensional then Proposition \ref{prop.conddim} implies that $\mu_{\Phi^\mu}$ is finitely supported. According to Remark \ref{rem.partinv} we have $\mu_{\Phi^\mu} = \mu$, and so we find that Conjecture \ref{conj.see-1} implies Conjecture \ref{conj.erg-1}, as required.

\paragraph{Deduction of Conjecture \ref{conj.see-1} from Conjecture \ref{conj.erg-1}} Assume Conjecture \ref{conj.erg-1}, and let $\Phi \in \operatorname{TSt}(C^\ast(\mathfrak{F}_{2,3}))$ be extreme. We may assume without loss of generality that $\Phi$ is conditionally infinite-dimensional, in order to show that $\Phi = \Delta$. Combining this assumption with Corollary \ref{cor.condim}, we see that $\Phi$ is recoverable. Then Clause II in Proposition \ref{prop.convexity} implies that $\mu_\Phi$ is an ergodic element of $\prob(\mathbb{T};2,3)$, and so Conjecture \ref{conj.erg-1} implies that $\mu_\Phi = \lambda$ or $\mu_\Phi$ is finitely supported. \\
\\
The assumption that $\Phi$ is recoverable implies that $\Phi^{\mu_\Phi} = \Phi$ in accordance with Remark \ref{rem.recoverable}. Since $\Phi$ was assumed to be conditionally infinite-dimensional, Corollary \ref{cor.condim} implies $\mu_\Phi$ cannot be finitely supported. Thus Conjecture \ref{conj.erg-1} implies that $\mu_\Phi = \lambda$ and we find $\Phi = \Phi^\lambda = \Delta$. Thus Conjecture \ref{conj.erg-1} indeed implies Conjecture \ref{conj.see-1}.

\paragraph{Deduction of Conjecture \ref{conj.erg-2} from Conjecture \ref{conj.see-2}} Assume Conjecture \ref{conj.see-2}, and let $\mu \in \prob(\mathbb{T};2,3)$ be arbitrary. By hypothesis we can find a sequence $(\Phi_n)_{n \in \mathbb{N}}$ of conditionally finite dimensional elements of $\prob(\mathbb{T};2,3)$ which converges pointwise to $\Phi^\mu$. According to Proposition \ref{prop.conddim} we have that $\mu_{\Phi_n}$ is finitely supported for all $n \in \mathbb{N}$, and Clause II in Proposition \ref{prop.contstate} implies that $(\mu_{\Phi_n})_{n \in \mathbb{N}}$ converges vaguely to $\mu_{\Phi^\mu}$. According to Remark \ref{rem.partinv} we have $\mu_{\Phi^\mu} = \mu$, and we find that Conjecture \ref{conj.see-2} indeed implies Conjecture \ref{conj.erg-2}.

\paragraph{Deduction of Conjecture \ref{conj.see-2} from Conjecture \ref{conj.erg-2}} Assume Conjecture \ref{conj.erg-2}, and let $\Phi \in \operatorname{TSt}(C^\ast(\mathfrak{F}_{2,3}))$. We may assume without loss of generality that $\Phi$ is extreme and conditionally infinite-dimensional. Thus Corollary \ref{cor.condim} implies that $\Phi$ is recoverable. By hypothesis we can find a sequence of finitely supported measures $(\mu_n)_{n \in \mathbb{N}}$ converging vaguely to $\mu_\Phi$. Then Corollary \ref{cor.condim} implies that $\Phi^{\mu_n}$ is conditionally finite-dimensional for all $n \in \mathbb{N}$, and Clause I in Proposition \ref{prop.contstate} implies that $\Phi^{\mu_n}$ converges pointwise to $\Phi^{\mu_\Phi}$. Since $\Phi$ was assumed to be recoverable we find that $\Phi^{\mu_n}$ converges pointwise to $\Phi$, and thus we see that Conjecture \ref{conj.erg-2} indeed implies Conjecture \ref{conj.see-2}.

\paragraph{Deduction of Conjecture \ref{conj.erg-3} from Conjecture \ref{conj.see-3}} Assume Conjecture \ref{conj.see-3}, and let $(\mu_n)_{n \in \mathbb{N}}$ be a sequence of extreme elements of $\prob(\mathbb{T})$ with $\lim_{n \to \infty} \lvert \operatorname{supp}(\mu_n) \rvert = \infty$. Then Corollary \ref{cor.condim} implies that $(\Phi^{\mu_n})_{n \in \mathbb{N}}$ is a sequence of extreme conditionally finite-dimensional elements of $\operatorname{TSt}(C^\ast(\mathfrak{F}_{2,3}))$ such that $\lim_{n \to \infty} \operatorname{cndm}(\Phi^{\mu_n}) = \infty$. Thus Conjecture \ref{conj.see-3} implies that $\Phi^{\mu_n}$ converges pointwise to $\Delta$, and so Clause II in Proposition \ref{prop.contstate} implies that $\mu_{\Phi^{\mu_n}}$ converges vaguely to $\mu_\Delta = \lambda$. According to Remark \ref{rem.partinv} we have $\mu_{\Phi^{\mu_n}} = \mu_n$ for all $n \in \mathbb{N}$, and so we find that Conjecture \ref{conj.see-3} indeed implies Conjecture \ref{conj.erg-3}. 

\paragraph{Deduction of Conjecture \ref{conj.see-3} from Conjecture \ref{conj.erg-3}} Assume Conjecture \ref{conj.erg-3} and let $(\Phi_n)_{n \in \mathbb{N}}$ be a sequence of extreme conditionally finite-dimensional elements of $\operatorname{TSt}(C^\ast(\mathfrak{F}_{2,3}))$ such that $\lim_{n \to \infty} \operatorname{cndm}(\Phi_n) = \infty$. By splitting this sequence into two subsequences, we may assume without loss of generality that either $\Phi_n$ is recoverable for all $n \in \mathbb{N}$ or $\Phi_n$ fails to be recoverable for all $n \in \mathbb{N}$. We now address each of these cases separately.

\begin{itemize} \item Assume that $\Phi_n$ is recoverable for all $n \in \mathbb{N}$. Then Proposition \ref{prop.conddim} implies that $(\mu_{\Phi_n})_{n \in \mathbb{N}}$ is a sequence of ergodic finitely supported elements of $\prob(\mathbb{T};2,3)$ such that $\lim_{n \to \infty} \lvert \operatorname{supp}(\mu_{\Phi_n})\rvert = \infty$. Thus Conjecture \ref{conj.erg-3} implies that $(\mu_{\Phi_n})_{n \in \mathbb{N}}$ converges vaguely to $\lambda$, and Clause I in Proposition \ref{prop.contstate} implies that $\Phi^{\mu_{\Phi_n}}$ converges pointwise to $\Delta$. Since we assumed that $\Phi_n$ was recoverable, we have $\Phi^{\mu_{\Phi_n}} = \Phi_n$ for all $n \in \mathbb{N}$, and this we find that $(\Phi_n)_{n \in \mathbb{N}}$ converges pointwise to $\Delta$ in this case.

\item Assume that $\Phi_n$ fails to be recoverable for all $n \in \mathbb{N}$. Then Corollary \ref{cor.condim-1} implies that $\lim_{n \to \infty} \Phi_n(j,k;q) = 0$ for all $j,k \in \mathbb{Z}$ such that $jk \neq 0$ and all $q \in \mathbb{Z}[1/6]$. Since the sequence $(\Phi_n)_{n \in \mathbb{N}}$ converges pointwise to $\Delta$ on a subset of $C^\ast(\mathfrak{F}_{2,3})$ whose linear span is norm-dense, we find that $(\Phi_n)_{n \in \mathbb{N}}$ pointwise converges to $\Delta$ in this case as well. \end{itemize}

Thus we see that Conjecture \ref{conj.erg-3} indeed implies Conjecture \ref{conj.see-3}.

\bibliographystyle{plain}

\bibliography{bibliography-2024_10_29.bib}

\hrulefill

\vspace{0.5 cm}

\textbf{Peter Burton:} \\
Department of Mathematics and Statistics\\
University of Wyoming, Laramie WY \\
\url{peterburton1728@gmail.com}\\
\\
\textbf{Jane Panangaden:} \\
Mathematics Field Group\\
Pitzer College, Claremont CA \\
\url{jane.panangaden@gmail.com}

\end{document}